\documentclass[12pt, a4paper]{amsart}
\usepackage{amsmath, amsthm, amstext, amsbsy, amssymb} 
\usepackage{mathrsfs}

\usepackage[backend=biber, style=alphabetic, sorting=nyt, nohashothers=true, maxbibnames=99, maxalphanames=99, giveninits=true]{biblatex}
\addbibresource{bibliography.bib}




\usepackage{xcolor}                 
\usepackage[shortlabels]{enumitem} 

\usepackage[a4paper, margin=2.5cm]{geometry}

\usepackage{hyperref} 				
\hypersetup{						
	colorlinks=true,
	linkcolor=blue,
	citecolor=red,
}

\allowdisplaybreaks[1]		   			


\newtheorem{prop}{Proposition}
\newtheorem{lem}{Lemma}
\newtheorem{thrm}{Theorem}

\newtheorem*{thrm*}{Theorem}
\newtheorem{thrmm}{Theorem}
 
\newtheorem{crl}{Corollary}
\newtheorem{rmk}{Remark}

\newtheorem*{clm}{Claim}
\newtheorem*{prm}{Problem}

\theoremstyle{definition}  
\newtheorem{definition}{Definition}

\renewcommand{\qedsymbol}{$ \blacksquare $}

\renewcommand{\le}{\leqslant}
\renewcommand{\ge}{\geqslant}

\let\intt\int
\renewcommand{\int}{\intt\limits}

\newcommand{\C}{\mathbb{C}} 						
\newcommand{\D}{\mathbb{D}} 						
\newcommand{\R}{\mathbb{R}}		                    
\newcommand{\N}{\mathbb{N}}	                	    
\newcommand{\TT}{\mathbb{T}}	 					
\newcommand{\Z}{\mathbb{Z}}	 					    
                        
\newcommand{\F}{\mathcal{F}}                        
\newcommand{\M}{\mathcal{M}}                        
\newcommand{\GG}{\mathcal{G}}

\renewcommand{\a}{\alpha}									
\renewcommand{\b}{\beta}									
\newcommand{\g}{\gamma}						                
\newcommand{\G}{\Gamma}		                                
\newcommand{\de}{\delta}									
\newcommand{\e}{\varepsilon}								
\newcommand{\z}{\zeta}										
\renewcommand{\t}{\theta}									
\renewcommand{\k}{\kappa}									
\renewcommand{\l}{\lambda}									
\renewcommand{\L}{\Lambda}									
\let\originalnu\nu											
\renewcommand{\nu}{\originalnu} 					        
\newcommand{\f}{\varphi}									

\DeclareMathOperator{\dist}{dist}							

\DeclareMathOperator{\supp}{supp}					

\renewcommand{\Re}{\operatorname{Re}}		    	

\DeclareMathOperator{\id}{id}								

\DeclareMathOperator{\Hol}{Hol}


\sloppy 
\title[Shift-invariant sampling in two-sided small Fock spaces]
{Shift-invariant sampling in two-sided small Fock spaces}
\author{Yurii Belov \and Mikhail Mironov}
\thanks{The work was carried out with the financial support of MSHE RF in the framework of a scientific project under agreement no. 075-15-2025-013.}
\address{
\newline \phantom{x}\,\, Yurii Belov,
\newline Department of Mathematics and Computer Science
St. Petersburg State University,
14th Line 29b, Vasilyevsky Island, St. Petersburg, Russia, 199178,
\newline {\tt j\_b\_juri\_belov@mail.ru} 
\smallskip
\newline \phantom{x}\,\, Mikhail Mironov,
\newline Univ Gustave Eiffel, Univ Paris Est Créteil, CNRS, LAMA UMR8050 F-77447 Marne-la-Vallée, France
\newline {\tt mikhail.mironov2@univ-eiffel.fr}
\smallskip
}

\begin{document}

\begin{abstract}
We consider the sampling problem for two-sided small Fock spaces $\mathcal{F}^p_{\alpha}$, for the full range $0 < p \le \infty$. 
We establish a geometric description of shift-invariant sampling sequences, i.e., sequences $\Lambda$ such that $c \Lambda$ is sampling for all $c \in \mathbb{C} \setminus \{ 0 \}$.
\end{abstract} 

\maketitle

\section{Introduction}
    \subsection{Definitions and main results}
        In this section, we introduce the basic concepts and formulate the main results. We define two-sided small Fock spaces, establish what it means for a sequence to be shift-invariant sampling, and present an appropriate notion of density in this setting. Next, we state the central result of this paper, a description of shift-invariant sampling sequences in terms of lower density.
    
    \subsubsection{Small Fock spaces}
        We consider an analogue of Fock-type spaces with a singularity at the origin.
        For brevity, we denote $\C_0 = \C \setminus \{ 0 \}$.
        \begin{definition}
            Let $\f: \C_0 \to \R$ be a radial function $\f(z) = \f(|z|)$ (a weight) that is decreasing on $(0, 1]$ and increasing on $[1, \infty)$. 
        
            \emph{The two-sided Fock-type space} for this weight $\f$ and $0 < p \le \infty$ is
            \begin{align*}
                \F^{\infty}_{\f} & = \{ f \in \Hol(\C_0): \|f\|_{\F^{\infty}_{\f}} = \sup_{z \in \C_0} |f(z)|e^{-\f(z)} < \infty \}, \quad p = \infty, \\
                \F^{p}_{\f} & = \{ f \in \Hol(\C_0): \|f\|^p_{\F^{p}_{\f}} = \int_{\C_0} |f(z)|^pe^{-p\f(z)} dA(z) < \infty \}, \quad 0 < p < \infty,
            \end{align*}     
            where $dA$ is the two-dimensional Lebesgue measure.
        \end{definition}
        
        For $\f(z) = \a \log^2|z|, \ \a > 0$, we call such spaces \emph{two-sided small Fock spaces}, and denote them $\F^{p}_\a$. These are Banach spaces for $1 \le p \le \infty$ and F-spaces for $0 < p < 1$. 
        \emph{One-sided small Fock spaces} are their subspaces which consist of entire functions. The one-sided case is extensively explored in the literature; see, e.g., Baranov, Dumont, Hartmann and Kellay \cite{SmallFock}, Omari \cite{OmariCIS}, Omari and Kellay \cite{OmariCISb} for recent developments and an overview of the previous results.
        
        We consider spaces of functions with a singularity at the origin since that appears to be natural with the $\a \log^2|z|$ weight. In particular, that provides us with automorphisms of the form $f(z) \mapsto a_n z^{n} f(b^n z), \ n \in \Z$, see Subsection \ref{subsect: Shift-invariance}, and a connection to Gabor analysis, see Subsection \ref{sect: Gabor}.

    \subsubsection{Sampling and shift-invariant sampling}
        \begin{definition}
            A set $\L \subset \C_0$ is \emph{a sampling set for} $\F^{\infty}_\a$ if there exists $K > 0$ such that for any $f \in \F^{\infty}_\a$ 
            \begin{equation*}
                \| f \|_{\F^{\infty}_{\a}} \le K \sup_{\l \in \L} |f(\l)|e^{-\a \log^2|\l|}.
            \end{equation*}
            In this case we say that $\L$ is \emph{SS} for $\F^{\infty}_\a$.
            We denote $\| f|_{\L} \|_{\infty, \a} = \sup_{\l \in \L} |f(\l)|e^{-\a \log^2|\l|}$.
            The sampling constant for a set $\L$ is
            \begin{equation} \label{eq: K Lambda definiton}
                K_{\L} = \sup_{f \in \F^{\infty}_\a \setminus \{ 0 \}} \frac{\| f \|_{\F^{\infty}_\a}}{\| f|_{\L} \|_{\infty, \a}}.
            \end{equation}
        \end{definition}
        Clearly, $\L$ being a sampling set for $\F^{\infty}_\a$ is equivalent to $K_\L < \infty$.
        \begin{definition}
            Let $0 < p < \infty$. A set $\L \subset \C_0$ is \emph{a sampling set for} $\F^{p}_\a$ if there exist $A, B > 0$ such that for any $f \in \F^{p}_\a$     
            \begin{equation} \label{eq: sampling definition finite p}
                A \| f|_{\L} \|_{p, \a}^p \le \| f \|_{\F^p_{\a}}^p \le B \| f|_{\L} \|_{p, \a}^p,
            \end{equation}
            where we define
            \begin{equation} \label{eq: restriction norm}
                \| f|_{\L} \|_{p, \a}^p = \sum_{\l \in \L} |f(\l)|^p |\l|^2 e^{-p\a \log^2 |\l|}.   
            \end{equation}
            In this case we say that $\L$ is \emph{SS} for $\F^{p}_\a$.
            The sampling constants for a set $\L$ are
            \begin{equation} \label{eq: A B Lambda definiton}
                A_{\L} = \inf_{f \in \F^{p}_\a \setminus \{ 0 \}} \frac{\| f \|_{\F^p_{\a}}^p}{\| f|_{\L} \|_{p, \a}^p}, \qquad B_{\L} = \sup_{f \in \F^{p}_\a \setminus \{ 0 \}} \frac{\| f \|_{\F^p_{\a}}^p}{\| f|_{\L} \|_{p, \a}^p}.
            \end{equation}
        \end{definition}
        \begin{definition}
            A set $\L \subset \C_0$ is \emph{shift-invariant sampling for} $\F^{\infty}_\a$ if 
            \begin{equation*}
                \sup_{c > 0} K_{c \L} < \infty. 
            \end{equation*}
            In this case we say that $\L$ is \emph{ShS} for $\F^{\infty}_\a$. 
            
            Similarly, let $0 < p < \infty$. A set $\L \subset \C_0$ is \emph{shift-invariant sampling for} $\F^{p}_\a$ if 
            \begin{equation*}
                0 < \inf_{c > 0} A_{c \L} \le \sup_{c > 0} B_{c \L} <\infty. 
            \end{equation*}
            In this case, we say that $\L$ is \emph{ShS} for $\F^{p}_\a$. In fact, $\L$ is ShS for $\F^{p}_\a$ if and only if $c \L$ is SS for $\F^{p}_\a$ for any $c > 0$, see Subsection \ref{subsect: Shift-invariance}, Remark \ref{rmk: ShS is SS for any c}.
            Moreover, since two-sided small Fock spaces are radial, we can always replace $c > 0$ by $c \in \C_0$. 
        \end{definition}

    \subsubsection{Overview}
        For Fock-type spaces of entire functions with $\f(z) \gg \log^2|z|$ and $1 \le p \le \infty$, there exists a complete solution of the sampling problem. For Bargmann-Fock space, $\f(z) = c |z|^2$, it was obtained by Seip \cite{SeipBargFock}, and Seip and Wallstén \cite{SeipBargFock2}. The solution was gradually extended for more general weights, see \cite{lyubarskii1994sampling, Ortega1995OnInterpolation, marco2003interpolating, borichev2007sampling}. Finally, Borichev and Lyubarskii \cite{FockType} demonstrated that this solution does not extend for weights $\f$ that grow slower than $\log^2|z|$. 
        
        For one-sided small Fock spaces with $p = 2, \infty$, there are partial density results, see \cite[Theorems 1.4, 1.5]{SmallFock}. For two-sided small Fock spaces with $p = 2$, there is a density description for ShS sequences $\L$ that lie on a logarithmic spiral, see \cite[Theorem 1.1, Lemma 5.1]{HypCh}.
        
        We note that the definition of shift-invariant sampling is reasonable in the sense that sampling does not automatically imply shift-invariant sampling. It is a consequence of the fact that two-sided small Fock spaces are not shift-invariant themselves. 
        This situation differs from the Bernstein space case, Beurling \cite[Balayage of Fourier-Stieltjes Transforms, p.341]{Beurling}, and Bargmann-Fock space, \cite{SeipBargFock, SeipBargFock2}, where these two notions coincide.
        
        In fact, this is the foundation on which Beurling's balayage technique \cite{Beurling} stands, we refer to \cite[Lecture 3]{olevskii2016functions} for a self-contained presentation of Beurling's technique for the Bernstein space. The shift-invariance is the key that allows one to apply the technique to Bargmann-Fock space as well \cite{SeipBargFock, SeipBargFock2}. In our case, Beurling's ideas allow us to provide a description of shift-invariant sampling sequences in the $p = \infty$ case, and an idea from \cite{SeipBargFock} allows us to extend the description to $0 < p \le \infty$. 

    \subsubsection{Logarithmic density}
        We establish an appropriate notion of separation in our setting.
        Observe that $\R \times \TT \cong \C_0$ via $(t, \z) \mapsto e^t \z$. Hence, one can consider a metric $d_{\log}$ on $\C_0$, the product metric on $\R \times \TT$, that is, for $z, w \in \C_0$
        \begin{equation*}
            d_{\log} (z, w) = |\log|z| - \log|w| | + \left| \frac{z}{|z|} - \frac{w}{|w|} \right|. 
        \end{equation*}
        \begin{definition}
            We call $\L$ \emph{separated} if there exists $d_{\L} > 0$ such that for any two different $\l, \l' \in \L$
            \begin{equation*}
                d_{\log}(\l, \l') \ge d_{\L}.
            \end{equation*}
        \end{definition}
        Similarly to \cite[Theorem 2, p.344]{Beurling} and \cite[Lemma 4.1, Lemma 7.2]{SeipBargFock} it is possible to show that we can go from SS (ShS) $\L$ to its separated subset without losing the SS (ShS) property. In light of this, we formulate our main results for separated sequences. 
        \begin{definition}
            For a set $\L \subset \C_0$, we define \emph{the lower logarithmic density} 
            \begin{equation*}
                D^-_{\log}(\L) = \liminf_{R \to \infty} \inf_{c > 0} \frac{|\{ \l \in \L: c \le |\l| \le c e^R\}|}{R}.
            \end{equation*}
            If we denote $\L$ as a subset of $\R \times \TT$ by $L$, then this density takes a more classic form 
            \begin{equation*}
                D^-_{\log}(\L) = \liminf_{R \to \infty} \inf_{t \in \R} \frac{|L \cap [t, t + R] \times \TT|}{R}.
            \end{equation*}
        \end{definition}

    \subsubsection{Main results}
        Finally, with the right notion of density, we are ready to formulate the main result of this paper. We separate it into two theorems for convenience.
        \begin{thrm} \label{thrm: ShS p = infty}
            Suppose $\L \subset \C_0$ is separated. Then $\L$ is shift-invariant sampling for $\F^{\infty}_\a$ if and only if $D^-_{\log}(\L) > 2\a$.
        \end{thrm}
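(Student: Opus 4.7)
The plan is to adapt Beurling's weak-limit (``balayage'') technique from the Bernstein and Bargmann--Fock settings to the two-sided small Fock space. The key observation is that in logarithmic coordinates $z = e^{s+i\t}$ the dilation $f(\cdot) \mapsto f(c\,\cdot)$ becomes a translation $s \mapsto s + \log|c|$ while the weight $\a \log^2|z|$ becomes $\a s^2$; thus, after passing to $\R \times \TT \cong \C_0$, shift-invariance of sampling becomes a genuine translation invariance in $s$. The critical value $2\a$ is dictated by the natural atoms $f_n(z) = z^n e^{-n^2/(4\a)}$: one computes $|f_n(z)| e^{-\a \log^2|z|} = e^{-\a(s - n/(2\a))^2}$, so the atoms are Gaussians centered at $s_n = n/(2\a)$, with logarithmic spacing $1/(2\a)$ and hence density exactly $2\a$. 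In both implications the cornerstone will be a uniqueness/non-uniqueness dichotomy: \emph{a separated sequence $\L^\star \subset \C_0$ of uniform lower density $> 2\a$ is a uniqueness set for $\F^\infty_\a$, while one of uniform upper density $< 2\a$ is not.}

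\emph{Necessity.} Suppose $\L$ is ShS with uniform sampling constant $K$, but $D^-_{\log}(\L) \le 2\a$. Pick $\e > 0$ and choose $R_k \to \infty$ and $c_k \in \C_0$ so that $c_k^{-1}\L \cap \{1 \le |z| \le e^{R_k}\}$ has at most $(2\a + \e) R_k$ points. Using separation, extract a subsequential Hausdorff limit $\L^\star$ of $\{c_k^{-1}\L\}$. By the uniform ShS assumption and the standard Montel-type stability of the sampling inequality under weak limits, $\L^\star$ is still SS for $\F^\infty_\a$. Moreover $\L^\star$ has uniform upper density at most $2\a + \e$, so by the non-uniqueness direction of the dichotomy there is a nonzero $f^\star \in \F^\infty_\a$ vanishing on $\L^\star$, contradicting SS.

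\emph{Sufficiency.} Assume $D^-_{\log}(\L) > 2\a$. To bound $K_{c\L}$ uniformly in $c \in \C_0$, argue by contradiction: take $c_k \in \C_0$ and $f_k \in \F^\infty_\a$ with $\|f_k\|_{\F^\infty_\a} = 1$ but $\|f_k|_{c_k\L}\|_{\infty, \a} \to 0$. Replace $\L$ by $c_k^{-1}\L$ and translate in $s$ so that the supremum defining $\|f_k\|_{\F^\infty_\a}$ is essentially attained in a fixed compact region of $\R \times \TT$. Extract a locally uniform limit $f^\star \in \F^\infty_\a \setminus \{0\}$ and a Hausdorff limit $\L^\star$ with $D^-_{\log}(\L^\star) \ge D^-_{\log}(\L) > 2\a$; then $f^\star$ vanishes on $\L^\star$, contradicting the uniqueness direction of the dichotomy.

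\emph{Main obstacle.} The hard step is the constructive half of the dichotomy: producing a nonzero $f \in \F^\infty_\a$ with a prescribed sparse zero set. A natural candidate is a bilateral Weierstrass product over $\L^\star$, but it must simultaneously accommodate $z \to 0$ and $z \to \infty$, and its logarithmic growth must be matched to the weight $\a \log^2|z|$, whose second derivative in $s = \log|z|$ equals $2\a$. It is precisely this matching, realized via an integration by parts against the counting function of $\L^\star$, that produces the threshold $2\a$ and forces the Weierstrass product to lie in $\F^\infty_\a$ exactly in the low-density regime.
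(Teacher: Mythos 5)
Your sufficiency argument is essentially the paper's: reduce ShS to the statement that every weak limit of shifts is a uniqueness set (normal families plus the discrete automorphisms $T_\a^n$ to renormalize), and then use a Jensen-type estimate showing that zero sets of nonzero functions in $\F^{\infty}_\a$ have logarithmic density at most $2\a$. That half is sound, modulo actually proving the uniqueness statement, which the paper does by Jensen's formula in the variable $z=e^{s+i\t}$.

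The necessity half has a genuine gap, and it is exactly at the critical density. First, a bookkeeping error: from $D^-_{\log}(\L)\le 2\a$ you extract windows $[t_k,t_k+R_k]$ with at most $(2\a+\e)R_k$ points and pass to a weak limit $\L^\star$, but control of the count on the full window gives no control on sub-windows, so $\L^\star$ need not have uniform upper density $\le 2\a+\e$; and even if it did, $2\a+\e$ is strictly larger than $2\a$, so the non-uniqueness half of your dichotomy (stated for upper density $<2\a$) does not apply. More fundamentally, your scheme can at best yield $D^-_{\log}(\L)\ge 2\a$, not the strict inequality the theorem asserts: at density exactly $2\a$ there exist separated sets (e.g.\ complete interpolating sequences such as $\{e^{k/(2\a)}\}$) that are sampling and uniqueness sets, so no weak-limit/non-uniqueness argument alone can rule them out --- they genuinely are SS, just not ShS, and detecting that requires more than uniqueness of limits of a single space. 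The paper closes this endpoint by a scale-of-spaces lemma (if $\L$ is ShS for $\F^{\infty}_\a$ then it is ShS for $\F^{\infty}_{\a+\e}$ for some $\e>0$, proved again by a compactness/renormalization argument) combined with a quantitative lower bound on the sampling constant: an explicit finite product with zeros at the points of $\L$ in a sparse annulus forces $K_\L\gtrsim e^{c(\de|I|)^2}$ whenever some logarithmic interval $I$ carries only $(1-\de)2\a|I|$ points, so density $<2(\a+\e)$ contradicts ShS for $\a+\e$ and strictness follows. Your proposal contains no analogue of either ingredient; the Weierstrass-product ``main obstacle'' you identify, even if carried out for sets of uniform upper density $<2\a$, would still leave the critical case $D^-_{\log}(\L)=2\a$ unresolved.
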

        \begin{thrm} \label{thrm: ShS p = 2}
            Let $0 < p < \infty$. Suppose $\L \subset \C_0$ is separated. Then $\L$ is shift-invariant sampling for $\F^{p}_\a$ if and only if $D^-_{\log}(\L) > 2\a$.
        \end{thrm}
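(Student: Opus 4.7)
The plan is to derive Theorem \ref{thrm: ShS p = 2} from Theorem \ref{thrm: ShS p = infty}, treating the two implications separately.

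\textbf{Necessity.} Assume $\L$ is ShS for $\F^p_\a$. I would first upgrade this to ShS for $\F^\infty_\a$, after which Theorem \ref{thrm: ShS p = infty} delivers the density bound. Given $g \in \F^\infty_\a$, I would construct approximants $g_R \in \F^p_\a$ by multiplying $g$ with a holomorphic cutoff that is essentially $1$ on a $d_{\log}$-strip of width $\sim R$ centered near a peak of $|g(\cdot)| e^{-\a \log^2|\cdot|}$ and decays fast enough outside to force integrability. Applying the $\F^p_\a$ ShS inequality to $g_R$, normalizing by the strip volume, and letting $R \to \infty$ would recover the $\F^\infty_\a$ ShS inequality for $g$, with uniformity in $c$ preserved throughout.

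\textbf{Sufficiency, upper bound.} The inequality $\|f|_\L\|^p_{p,\a} \le C \|f\|^p_{\F^p_\a}$ is the standard Plancherel--P\'olya bound for a $d_{\log}$-separated set. Subharmonicity of $|f|^p$ gives $|f(\l)|^p \le C |\l|^{-2} \int_{B_\l} |f(z)|^p \, dA(z)$ on a $d_{\log}$-ball $B_\l$ of small fixed radius, on which the weight $e^{-p\a \log^2|z|}$ is comparable to $e^{-p\a \log^2|\l|}$; separation makes the balls disjoint. The constant depends only on $\a$ and $d_\L$ and is unchanged under $\L \mapsto c\L$, so $\sup_c B_{c\L} < \infty$.

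\textbf{Sufficiency, lower bound.} This is the main obstacle; here I would follow the idea from \cite{SeipBargFock}. Theorem \ref{thrm: ShS p = infty} provides, uniformly in $c \in \C_0$, a constant $K$ with
\begin{equation*}
    \|h\|_{\F^\infty_\a} \le K \sup_{\l \in \L} |h(c\l)| e^{-\a \log^2|c\l|}, \qquad h \in \F^\infty_\a.
\end{equation*}
For fixed $f \in \F^p_\a$ and $z \in \C_0$, I would apply this to $h_z(w) = w^{n(z)} f(wz)$ with $n(z) \in \Z$ chosen so that $n(z) + 2\a \log|z|$ lies in $[0, 1)$; this choice essentially cancels the cross-term $2\a \log|w| \log|z|$ arising in $\log^2|wz|$, up to a residual factor of $|w|^{M(z)}$ with $M(z) \in [0,1)$. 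After this normalization, evaluating at $w = c\l$ and raising to the $p$-th power yields a pointwise bound on $|f(z)|^p e^{-p\a \log^2|z|}$ by a supremum over $\l \in \L$ of weighted values of $|f|^p$ at $cz\l$. Integrating in $z$ and averaging $c$ over a fundamental domain for the $\Z$-action underlying the choice of $n(z)$ (so that the residual $|\cdot|^{M(z)}$ factor averages out) would convert the supremum into an integral that, by a Schur-type overlap estimate, is bounded by $\sum_\l |f(\l)|^p |\l|^2 e^{-p\a \log^2|\l|}$. The strict density $D^-_{\log}(\L) > 2\a$ is essential to keep the overlap constant finite; controlling this transfer from sup to sum while maintaining uniformity in $c$, and carefully reconciling the discreteness of the weight's shift-invariance with the continuous averaging, is the delicate technical step.
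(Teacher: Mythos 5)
Your high-level plan (deduce Theorem \ref{thrm: ShS p = 2} from Theorem \ref{thrm: ShS p = infty} in both directions) is the right one, but both substantive steps have genuine gaps. For necessity, the holomorphic cutoff you want does not exist: for a function $\chi$ holomorphic on $\C_0$, $\log\max_{|z|=r}|\chi(z)|$ is convex in $\log r$ (Hadamard three circles), so $\chi$ cannot have modulus comparable to $1$ on a long $d_{\log}$-strip and then decay at infinity (nor decay at both ends); and since $|g|e^{-\f}\le\|g\|_{\F^{\infty}_{\a}}$ already exhausts the weight, the integrability of $g\chi_R$ in $\F^p_\a$ would have to come from plain-modulus decay of $\chi_R$ at both $0$ and $\infty$, which is exactly what convexity forbids. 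More importantly, your sketch is essentially a fixed-dilate transference ``SS for $\F^p_\a$ $\Rightarrow$ SS for $\F^\infty_\a$'' with uniformity ``preserved throughout,'' and it never engages with where shift-invariance is genuinely needed: complete interpolating sequences are sampling for $\F^p_\a$ with density exactly $2\a$, so no soft argument at a single dilate can yield $D^-_{\log}>2\a$. The paper's necessity instead passes to a weak limit $\tilde\L$, divides a hypothetical $\F^\infty_\a$-function vanishing on $\tilde\L$ by finitely many linear factors to land in $\F^p_\a$, and then invokes the key Lemma \ref{lem: removing point sampling} (equivalently, the non-existence of shift-invariant complete interpolating sequences, proved via Theorem \ref{thrm: CIS characterisation} and a continuity argument in the shift parameter $t_0$) to conclude that $\tilde\L$ minus finitely many points is still sampling; this is the crux your proposal omits and does not replace.

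For the sufficiency lower bound, you invoke Theorem \ref{thrm: ShS p = infty} at the same weight $\a$ and hope to convert the resulting supremum over $\L$ into the discrete sum $\sum_{\l}|f(\l)|^p|\l|^2e^{-p\f(\l)}$ by integrating in $z$, averaging in $c$, and a ``Schur-type overlap estimate.'' That conversion is precisely the hard step and your mechanism does not supply it: estimating $\int\sup_{\l}(\cdot)\,dA(z)$ by $\sum_{\l}\int(\cdot)\,dA(z)$ only reproduces $\|f\|^p_{\F^p_\a}$ on the right (each term is an integral of $|f|^p$, not a point value), so the argument collapses to a triviality, and there is no decay at the same weight to run a Schur test against. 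The paper's route is different and uses the slack in the density hypothesis twice: from $D^-_{\log}(\L)>2(1+\e)\a$ it gets ShS for $\F^{\infty}_{(1+\e)\a}$, and then Lemma \ref{lem: alpha+eps, p = infty to alpha, p = 2} performs a Beurling-type duality transfer: point evaluations on $\F^{\infty,0}_{(1+\e)\a}$ are represented by $\ell^1(\L)$ coefficients $g_z$ with $\|g_z\|_1\le K$, one multiplies $f$ by an auxiliary function $P_z$ built from a canonical product with weight $e^{\e\a\log^2|z|}$ to absorb the gap between the two scales, and then H\"older (or $p$-subadditivity) plus an explicit integral (Schur-type) estimate yields $B_\L\lesssim K^p$. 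The $\ell^1$ representation and the $\e$-gain are exactly the ingredients your sketch is missing; without them the sup-to-sum passage is unjustified.
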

        One of the novelties of this result is that Beurling's technique together with Seip's idea allow us to extend the description to $0 < p \le \infty$. 
        These two theorems can be extended to cover non-separated sequences similarly to \cite[Theorem 2.1, Theorem 2.3]{SeipBargFock}.  
        \begin{rmk}
            Let $0 < p < \infty$. Then $\L$ is shift-invariant sampling for $\F^{p}_\a$ if and only if $\L$ can be represented as a finite union of separated sequences and there exists a separated sequence $\L' \subset \L$ such that $D^-_{\log}(\L') > 2\a$. A set $\L$ is shift-invariant sampling for $\F^{\infty}_\a$ if and only if there exists a separated sequence $\L' \subset \L$ such that $D^-_{\log}(\L') > 2\a$. 
        \end{rmk}
        The authors do not know whether there exists a geometric description for sampling sequences.
        \begin{prm}
            Provide a geometric description of sampling sequences in one-sided and two-sided small Fock spaces.
        \end{prm}
        However, it should be noted that at least a partial solution is attainable for one-sided small Fock spaces.
        
        In the classic Paley-Wiener scenario, the problem of describing sampling sequences had been standing for a long time. It was stated in 1952 by Duffin and Schaeffer \cite{DS52} as a question about Fourier frames. Finally, in 2002, Seip and Ortega-Cerdà \cite[Theorem 1]{FourierFrames} obtained an answer in terms of the de Branges theory. Essentially, it states that $\L$ is sampling if and only if the Paley-Wiener space, as a de Branges space, can be embedded into a larger de Branges space for which $\L$ is complete interpolating. This condition is less explicit than a density condition.
        
        We recall that one-sided small Fock spaces with $p = 2$ are de Branges spaces \cite[Proposition 2.8]{SmallFock}. Therefore, it is possible to obtain a description of real sampling sequences in the spirit of \cite{FourierFrames}. The authors do not know whether this result can be used to obtain a geometric description, at least for real sampling sequences, in the $p = 2$ case.

    \subsubsection*{The idea of the proof} 
        \begin{definition}
            Let $0 < p \le \infty$. We say that $\L$ is \emph{a complete interpolating sequence for} $\F^p_\a$ if $\L$ is SS for $\F^p_\a$ and the restriction operator $f \mapsto (f(\l)|\l|^{2/p}e^{-\f(\l)})_{\l \in \L}$ is an isomorphism between $\F^p_\a$ and $\ell^p(\L)$.
        \end{definition}
        The main idea in \cite{SeipBargFock} is that there are no such sequences for Bargmann-Fock spaces. In contrast, there are plenty of complete interpolating sequences for two-sided small Fock spaces. In particular, such sequences distinguish the notions of sampling and shift-invariant sampling.
        We use a geometric description of complete interpolating sequences, \cite[Theorem 2]{SmallFockCIS}, which is similar to the one-sided case due to Baranov, Dumont, Hartmann and Kellay \cite[Theorem 1.1, Theorem 1.2]{SmallFock} and Omari \cite[Theorem 1, Theorem 2]{OmariCIS}.
        \begin{thrmm} \label{thrm: CIS characterisation}
            Let $0 < p \le \infty$. A set $\L \subset \C_0$ is complete interpolating for $\F^p_{\a}$ if and only if the following three conditions are satisfied:
            \begin{enumerate} [label=(\roman*)]
                \item $\L$ is $d_{\log}$-separated. \label{pr: 1 two-sided}
            \end{enumerate} 
            In particular, we can enumerate $\L = ( \l_k )_{k \in \Z}$ so that $|\l_k| \le |\l_{k + 1}|$ and write $$\l_k = e^{\frac{k + 2/p + \de_k}{2\a}} e^{i \theta_k}, \ \de_k, \theta_k \in \R.$$
            \begin{enumerate} [label=(\roman*)]
                \addtocounter{enumi}{1} 
                \item $(\de_k)_{k \in \Z} \in \ell^{\infty}(\Z)$ \label{pr: 2 two-sided}
                \item Up to a shift in numeration there exists $N \in \N$ and $\e > 0$ such that 
                \begin{equation*}
                    \sup_{n \in \Z} \left| \frac{1}{N} \sum_{k = n}^{n + N - 1} \de_k \right| \le \frac{1}{2} - \e.
                \end{equation*} \label{pr: 3 two-sided}
            \end{enumerate}
        \end{thrmm}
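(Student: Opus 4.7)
The plan is to handle the two directions separately, with the heavier lifting on the sufficiency side through the construction of a generating function whose modulus is sharply controlled from above and below.

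\emph{Necessity.} Assume $\L$ is complete interpolating for $\F^p_\a$. Separation (i) follows from the boundedness of the interpolation operator: if two points of $\L$ could approach each other arbitrarily closely in $d_{\log}$, interpolating data concentrated at one of them would contradict the uniqueness of interpolation. For (ii) and (iii), the CIS property produces a generating function $G_\L$ with zero set exactly $\L$ (built, e.g., from the biorthogonal system to the reproducing kernels at $\L$) whose modulus behaves like $e^{\a \log^2|z|}|z|^{-2/p}$ on circles that avoid $\L$. Jensen's formula applied to $G_\L$ on circles $|z| = R$ produces the counting estimate that forces $|\l_k| = e^{(k + 2/p + \de_k)/(2\a)}$ with $(\de_k) \in \ell^{\infty}(\Z)$. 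Condition (iii) encodes the finer two-sided estimate: partial sums of $\de_k$ control a ``phase drift'' in $\log|G_\L|$, and a failure of (iii) would force $|G_\L|$ to decay strictly faster than $e^{\a \log^2|z|}$ on a sequence of circles, contradicting the sampling lower bound.

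\emph{Sufficiency.} Given (i)--(iii), I would explicitly construct a generating function of the form
\begin{equation*}
    G_\L(z) = \prod_{k \in \Z} E_k\!\left(\frac{z}{\l_k}\right),
\end{equation*}
where the $E_k$ are Weierstrass-type primary factors with convergence correctors chosen separately for positive and negative $k$ (reflecting the two-sided indexing and the singularity at the origin). The crux is to prove
\begin{equation*}
    |G_\L(z)| \asymp d_{\log}(z, \L)\, e^{\a \log^2|z|} |z|^{-2/p}, \quad z \in \C_0.
\end{equation*}
Passing to $\log|G_\L|$ and grouping the product by $d_{\log}$-annuli, the leading terms sum to $\a \log^2|z|$ while the remaining contributions are Abel-summable in terms of partial sums of $\de_k$; condition (iii) is precisely what keeps those partial sums uniformly bounded on every circle. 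Once this two-sided estimate on $G_\L$ is in hand, the sampling inequality follows by a standard reproducing-kernel and duality argument parallel to the one-sided case \cite{SmallFock, OmariCIS}, and the interpolation property follows from expanding solutions in the canonical series
\begin{equation*}
    f(z) = \sum_{k \in \Z} a_k \, \frac{G_\L(z)}{(z - \l_k)\, G_\L'(\l_k)},
\end{equation*}
whose $\F^p_\a$-norm is estimated through the asymptotics of $G_\L$ and the separation (i).

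\emph{Main obstacle.} The delicate step is the sharp two-sided asymptotic for $G_\L$. The weight $\a \log^2|z|$ sits on the boundary of what a canonical product naturally produces, so the cancellation of slowly varying correction terms in $\log|G_\L|$ must be tracked very carefully. The equivalence between $\ell^\infty$-boundedness of $(\de_k)$ and the $N$-average bound in (iii) is exactly what allows one to pass from pointwise estimates on specific circles to uniform bounds throughout $\C_0$. Extending from $p = 2$ to general $0 < p \le \infty$ then requires the $\F^p_\a$-analogue of Plancherel--Polya type inequalities, for which the separation (i) is the essential input.
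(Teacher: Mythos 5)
First, a point of reference: the paper does not prove this theorem at all --- it is imported from \cite[Theorem 2]{SmallFockCIS} (with the one-sided antecedents in \cite{SmallFock, OmariCIS}), so there is no in-paper proof to compare yours against. Judged on its own terms, your sketch has a genuine gap at what you yourself call the crux: the claimed two-sided estimate
\begin{equation*}
|G_\L(z)| \asymp d_{\log}(z,\L)\, e^{\a\log^2|z|}\,|z|^{-2/p}
\end{equation*}
is false for general sequences satisfying (i)--(iii). Writing $|z|=e^{t/(2\a)}$ and grouping the factors as you suggest, one finds for large $|z|$ that
\begin{equation*}
\log|G_\L(z)| = \a\log^2|z| - \tfrac{2}{p}\log|z| - \tfrac{1}{2\a}\sum_{k=1}^{n(z)}\de_k + O(1), \qquad n(z)\approx 2\a\log|z| - \tfrac 2p ,
\end{equation*}
so the deviation from your claimed asymptotics is governed by the \emph{partial sums} of $(\de_k)$, which conditions (ii) and (iii) do not keep bounded: already $\de_k\equiv 2/5$ satisfies both, and then $|G_\L(z)|$ differs from your right-hand side by the polynomial factor $|z|^{-2/5}$ (more generally the drift can be as large as $|z|^{\pm(1/2-\e)}$, and it can oscillate). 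Even in the unperturbed lattice case the correct statement carries an extra half-power of $|z|$, cf.\ the estimate $|G(z)|\asymp d_{\log}(z,\G)\,|z|^{1/2}e^{\f(z)}|z|^{-2/p}$ quoted in Section \ref{sect: prelim p < inf}. Because both your sufficiency argument (norm estimates for the interpolation series) and your necessity argument for (iii) (``failure of (iii) forces faster decay of $|G_\L|$'') rest on this pointwise $\asymp$, they do not survive: condition (iii) is not a pointwise growth condition on the generating function but an averaged, Muckenhoupt-$(A_p)$--type condition on the weight $\exp\bigl(-\tfrac{1}{2\a}\sum_{k\le n}\de_k\bigr)$, and the known proofs establish the equivalence via boundedness (and, for necessity, unboundedness when the averages reach $1/2$) of a discrete Hilbert-transform--type operator on the corresponding weighted $\ell^p$ spaces, not via a two-sided modulus estimate.

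Two smaller but real issues: your necessity step produces $(\de_k)\in\ell^\infty$ and the separation plausibly, but deriving (iii) is the genuinely hard half of necessity and is handled only by the incorrect decay claim above; and your closing appeal to ``a standard reproducing-kernel and duality argument'' does not cover the stated range $0<p\le\infty$, since duality is unavailable for $p<1$ and the $p=\infty$ case needs a separate treatment of the restriction map into $\ell^\infty$.
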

        It is a perturbation-type description that resembles the famous 1/4 Kadets–Ingham theorem for the Paley-Wiener space. 
        \begin{rmk} \label{rmk: m shift}
            Note that instead of saying that there exists a shift in numeration we can explicitly say that there exists $m \in \Z$, $N \in \N$ and $\e > 0$ such that
            \begin{equation*}
                \sup_{n \in \Z} \left| \frac{1}{N} \sum_{k = n + 1}^{n + N} \de_k + m \right| \le \frac{1}{2} - \e.    
            \end{equation*}
            Moreover, such $m$, if exists, is unique and, hence, the right shift in numeration is also unique.
        \end{rmk}
        The idea from \cite{SeipBargFock}, in fact, works in the sense that there are no shift-invariant complete interpolating sequences.
        
        In light of the density description of ShS, it is possible to show, similarly to the one-sided case \cite[Theorem 1.3]{SmallFock}, that any ShS sequence contains a complete interpolating subsequence. 
        
        We also note that any $\L$, complete interpolating sequence for $\F^p_{\a}$, has the critical density $D^-_{\log}(\L) = 2\a$. Hence, any such sequence is SS but not ShS. 

    \subsection{Motivation through Gabor frames} \label{sect: Gabor}
        Before proceeding to the main part, we present the original motivation for this paper. The motivation comes from Gabor analysis, and specifically the Gabor frame problem, a fundamental question concerning the completeness and stability of time–frequency representations \cite[Proposition 5.2.1]{FoundTimeFreq}.
        The reader may skip this section without loss of continuity.   
        
        We move into the setting of Gabor analysis, which is a type of time-frequency analysis. 
        The \emph{time-frequency} shift of a function $g \in L^2(\R)$ is
        \begin{equation*}
            M_yT_xg(t) = e^{2\pi i y t} g(t - x).
        \end{equation*}
        The decomposition of a signal ($L^2$ function) with respect to time-frequency shifts of a given \emph{window function} $g$ is the basis of time-frequency analysis.
        
        Suppose $S$ is a discrete subset of the time-frequency plane $\R^2$. The corresponding \emph{Gabor system} is
        \begin{equation*}
            \GG(g, S) = \{ M_yT_xg: (x,y) \in S \}.
        \end{equation*}
        \begin{definition}
            We say that $\GG(g, S)$ is \emph{a frame in} $L^2(\R)$ whenever there are $A, B > 0$ such that
            \begin{equation*}
                A \|f\|^2 \le \sum_{(x, y) \in S} |\langle f, M_yT_xg \rangle|^2 \le B \|f\|^2, \quad f \in L^2(\R).
            \end{equation*}
        \end{definition}
        A classical example is the rectangular lattice $S = \a \Z \times \b \Z, \ \a, \b > 0$. So, the problem is to find such $(\a, \b)$ that $\GG(g, \a \Z \times \b \Z)$ is a frame, where the window $g$ is known. We refer to \cite{FoundTimeFreq} for an in-depth exposition on the topic. It is well known that $\a \b \le 1$ is a necessary condition for $\GG(g, \a \Z \times \b \Z)$ to be a frame \cite[Corollary 7.5.1]{FoundTimeFreq}.
        
        Even in this lattice case, the problem is completely solved only for very specific functions or function classes, see e.g. \cite{ChFrames,IntervalFrames,HaarFrames,ComplPositive, ShSGabor, RatFrames} and references therein. 

        For a non-lattice set of time-frequency shifts $S$, that is, for the irregular case, almost nothing is known. The problem of Gabor frames in such generality is solved only for the Gaussian.
        
        The semi-regular case $S = \L \times \b \Z$ is more accessible. Gröchenig, Romero and Stöckler obtained a solution for totally positive functions of Gaussian type \cite{ShSGabor}, and for the hyperbolic secant \cite{HypChReal}. Recently, in 2024, Belov and Baranov \cite{HypCh} solved the problem for hyperbolic secant-type windows, that is,
        \begin{equation*}
            g(x) = \frac{1}{e^{ax} + e^{-bx}}, \quad \Re a, \Re b > 0. 
        \end{equation*}
        In particular, they solved the lattice case problem for another class of windows.
        These solutions have the form: $\GG$ is a frame whenever $D^-(\L) > \b$, where $D^-(\L)$ is \emph{the lower Beurling density}
        \begin{equation*}
            D^-(\L) = \liminf_{R \to \infty} \inf_{t \in \R} \frac{|\L \cap [t, t + R]|}{R}.
        \end{equation*}
        The main tool there is \emph{the shift-invariant space}
        \begin{equation*}
            V^2(g) = \left\{ f(x) = \sum_{n \in \Z} c_n g(x - n): c \in \ell^2(\Z) \right\}.
        \end{equation*}
        We assume that $g$ satisfies $\sum_{k \in \Z} \max_{x \in [k, k+1]} |g(x)| < \infty$ and $\sum_{k \in \Z} |\hat g (\xi + k)|^2 > 0, \ \xi \in \R$, where $\hat g$ is the Fourier transform of $g$. Under these two relatively weak conditions, we state the main theorem that relates Gabor frames to the shift-invariant space \cite[Theorem 2.3]{ShSGabor}.
        \begin{thrmm} \label{thrmm: frames to sampling}
            Suppose $\L \subset \R$ is separated. Then $\GG(g, \L \times \Z)$ is a frame if and only if $\L + x$ is sampling for $V^2(g)$ for any $x \in \R$.
        \end{thrmm}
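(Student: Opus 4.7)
The gateway is a Plancherel identity that recasts the Gabor frame condition as an averaged sampling inequality for a shift-invariant space. For fixed $x \in \R$, the map $y \mapsto \langle f, M_y T_x g\rangle$ gives the integer-frequency Fourier coefficients of $h_x(t) = f(t)\overline{g(t-x)}$; periodisation and Plancherel yield
\begin{equation*}
    \sum_{y \in \Z} |\langle f, M_y T_x g\rangle|^2 \,=\, \int_0^1 \Bigl| \sum_{k \in \Z} f(t+k) \overline{g(t+k-x)} \Bigr|^2 dt \,=\, \int_0^1 |\Psi_{t,f}(x-t)|^2 \, dt,
\end{equation*}
where $\Psi_{t,f}(u) := \sum_{k \in \Z} f(t+k)\, g^{*}(u-k)$ with $g^{*}(u) := \overline{g(-u)}$ lies in $V^{2}(g^{*})$ and has coefficient sequence $(f(t+k))_{k}$. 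The two standing hypotheses on $g$ (Wiener amalgam and non-vanishing $\Z$-periodisation of $|\hat g|^2$) make the synthesis map $\ell^{2}(\Z) \to V^{2}(g^{*})$ a bi-Lipschitz isomorphism; Fubini then gives $\int_{0}^{1} \|\Psi_{t,f}\|_{V^{2}(g^{*})}^{2}\, dt \asymp \|f\|_{L^{2}}^{2}$.

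\textbf{Reduction to sampling.} Summing over $\l \in \L$ turns the frame inequality for $\GG(g, \L \times \Z)$ into
\begin{equation*}
    A\|f\|^2 \le \int_0^1 \sum_{\l \in \L} |\Psi_{t,f}(\l - t)|^2\, dt \le B\|f\|^2,
\end{equation*}
an averaged sampling statement over $t \in [0,1)$ for $V^{2}(g^{*})$ at the translates $\L - t$. A reflection $u \mapsto -u$ identifies sampling of $V^{2}(g^{*})$ at $\L - t$ (for all $t$) with sampling of $V^{2}(g)$ at $\L + x$ (for all $x$), and the $\Z$-invariance of $V^{2}(g)$ lets us restrict $x$ to $[0,1)$. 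The implication ``$\L + x$ sampling for every $x$ $\Rightarrow$ $\GG(g, \L \times \Z)$ a frame'' is immediate from here: integrate the uniform pointwise sampling bounds, apply the norm equivalence above, and use the standard Bessel estimate for a separated sequence in $V^{2}(g)$ (supplied by the Wiener condition on $g$) for the upper bound.

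\textbf{The hard direction and main obstacle.} Conversely, suppose $\L + x_0$ fails to be sampling for some $x_0 \in [0,1)$; by $\Z$-invariance we may shift so that $\e < x_0 < 1 - \e$. Pick $\Psi^{(n)} \in V^{2}(g^{*})$ with $\|\Psi^{(n)}\| = 1$ and $\sum_{\mu \in \L - x_0} |\Psi^{(n)}(\mu)|^2 \to 0$, and let $(c_k^{(n)})$ be its coefficient sequence. Set $f_n(x) = c_k^{(n)}$ for $x \in (x_0 - \e + k,\, x_0 + \e + k)$, $k \in \Z$, and $f_n \equiv 0$ elsewhere. A direct computation gives $\Psi_{t, f_n} = \Psi^{(n)}$ for $t \in (x_0 - \e, x_0 + \e)$ and $\Psi_{t, f_n} \equiv 0$ for other $t \in [0,1)$, so
\begin{equation*}
    \int_0^1 \sum_{\l \in \L} |\Psi_{t, f_n}(\l - t)|^2 \, dt \,=\, \int_{-\e}^{\e} \sum_{\mu \in \L - x_0} |\Psi^{(n)}(\mu - s)|^2 \, ds,
\end{equation*}
while $\|f_n\|^2 \asymp 2\e$. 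To reach a contradiction with the lower frame bound, one must show that this right-hand side is $o(\e)$ as $n \to \infty$. This is the crux: one has to dominate the local $L^2$-mass of $\Psi^{(n)}$ on $\e$-neighbourhoods of $\L - x_0$ by the pointwise samples $(\Psi^{(n)}(\mu))_{\mu \in \L - x_0}$, uniformly in $n$. Such a local-to-pointwise bound is a standard by-product of the $W(L^{\infty}, \ell^{1})$-decay of $g$ together with the separation of $\L$, but quantifying it while keeping control as $\e$ is taken small constitutes the principal technical work. Once it is in place the contradiction closes the proof.
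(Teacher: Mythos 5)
First, a point of comparison: the paper does not prove this statement at all — Theorem \ref{thrmm: frames to sampling} is imported from \cite[Theorem 2.3]{ShSGabor} purely as motivation — so your proposal can only be measured against the known argument of Gr\"ochenig--Romero--St\"ockler, and your skeleton (the periodization/Plancherel bridge turning the Gabor sum over $\L\times\Z$ into an averaged sampling expression in a shift-invariant space, plus amalgam-type oscillation estimates) is indeed that strategy. Within it there are two genuine gaps. In the direction ``$\L+x$ sampling for every $x$ $\Rightarrow$ frame'' you propose to ``integrate the uniform pointwise sampling bounds'', but uniformity in $x$ is not a hypothesis: you are only given that each translate is sampling with some lower constant $A(x)>0$, which may a priori degenerate as $x$ varies, and integrating then yields $\int_0^1 A(t)\|\Psi_{t,f}\|^2\,dt$, which gives no positive lower frame bound when $\operatorname{ess\,inf}_t A(t)=0$, since the mass of $t\mapsto\|\Psi_{t,f}\|^2$ can concentrate where $A(t)$ is small. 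Uniformity has to be proved (for instance by showing $x\mapsto A(x)$ is continuous and invoking $\Z$-periodicity and compactness of $[0,1]$), and the tool for that is precisely the ingredient you leave unproved in the converse direction: the uniform oscillation estimate
\begin{equation*}
    \sum_{\mu} \sup_{|s|\le\e} |\Psi(\mu+s)-\Psi(\mu)|^2 \le \, o_{\e}(1)\, \|\Psi\|^2, \qquad \Psi\in V^2(g),\ \{\mu\}\ \text{separated}.
\end{equation*}
Your ``hard direction'' construction (the piecewise-constant $f_n$ reproducing $\Psi^{(n)}$ on a $2\e$-window in $t$) is a correct and clean reduction, but you explicitly defer this oscillation lemma, and as written both implications are conditional on it; so the proposal is a sound outline rather than a proof.

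Second, there is a bookkeeping error in the reflection step. The periodization identity ties the frame property to sampling of $V^2(g^*)$ at the sets $\L-t$; the reflection $u\mapsto-u$ (with a conjugation) converts this into sampling of $V^2(g)$ at $-(\L-t)=t-\L$, i.e.\ at translates of $-\L$, not of $\L$. Your claim that it identifies ``sampling of $V^2(g^*)$ at $\L-t$ (for all $t$) with sampling of $V^2(g)$ at $\L+x$ (for all $x$)'' silently replaces $-\L$ by $\L$; for a window $g$ with no symmetry the two families of translates are genuinely different, so to prove the statement in the form given you must either carry the reflected set (or the reflected window $g^*$) through the whole argument or explain why the conclusion is insensitive to the substitution. (In the density-type applications relevant to this paper the distinction is invisible, since $D^-(\L)=D^-(-\L)$, but a proof of the bare statement has to address it.)
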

        Recall that $\L \subset \R$ is said to be \emph{sampling} for $V^2(g)$ whenever there are $A, B > 0$ such that
        \begin{equation*}
            A \|f\|^2 \le \sum_{\l \in \L} |f(\l)|^2 \le B \|f\|^2, \quad f \in V^2(g).
        \end{equation*}
        In light of Theorem \ref{thrmm: frames to sampling} we are interested in sequences which remain sampling under shifts, that is, \emph{shift-invariant sampling} sequences. One of the key results in \cite{HypCh} is Lemma 5.1, which translates the problem from describing sampling sequences for $V^2(g)$ to describing positive sampling sequences for two-sided small Fock spaces. Thus, the main result of this paper provides an immediate and clear understanding underlying the result of Belov and Baranov \cite{HypCh}. In fact, the stated equivalence serves as one of the main motivations to study shift-invariant sampling sequences in two-sided small Fock spaces, and is the original motivation behind this paper.

    \subsection*{Notation} $A \lesssim B$ (equivalently $B \gtrsim A$) means that there is a constant $C > 0$ independent of the relevant variables such that $A \le CB$. We write $A \asymp B$ if both $A \lesssim B$ and $A \gtrsim B$.
    
    \subsection*{Organization of the paper} Section \ref{sect: prelim} contains preliminary results about two-sided small Fock spaces, which are needed for the rest of the paper. It sets the groundwork for the proof of Theorem \ref{thrm: ShS p = infty} in Section \ref{sect: Theorem 1}. Section \ref{sect: prelim p < inf} contains more preliminary results specific to the case $p < \infty$. 
    Finally, Section \ref{sect: Theorem 2} is where we finish by proving Theorem \ref{thrm: ShS p = 2}. 

\section{Preliminary results} \label{sect: prelim}
    In this section, we consider the symmetries of two-sided small Fock spaces, and next, we present the main tools of Beurling's balayage.
    \subsection{Shift-invariance of the spaces} \label{subsect: Shift-invariance}
    Two-sided small Fock spaces exhibit a kind of discrete shift-invariance, as stated in the following proposition.
    \begin{prop} \label{prop: shift operator}
        An operator $T_\a$ such that
        \begin{equation*}
            g(z) = (T_\a f)(z) = e^{-\frac{1}{4 \a}} z^{-1} f(e^{ \frac{1}{2\a}} z) 
        \end{equation*}
        satisfies
        \begin{equation*}
            |g(z)|e^{-\f(z)} = |f(w)|e^{-\f(w)}, \quad w = e^{\frac{1}{2\a}} z.
        \end{equation*}
        In particular, $T_\a$ is an isometric automorphism of $\F^{\infty}_\a$, while $e^{\frac{1}{p\a}} T_\a$ is an isometric automorphism of $\F^{p}_\a$ for any $0 < p < \infty$. 
    \end{prop}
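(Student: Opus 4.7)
The proof is a direct calculation. The plan is to first verify the pointwise weighted-modulus identity by explicitly expanding $\log^2|w|$ in terms of $\log^2|z|$, and then to deduce the isometry statements for $p=\infty$ and $0<p<\infty$ by taking, respectively, the supremum and the $L^p$ integral of this identity.

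First, I would fix $z \in \C_0$, set $w = e^{1/(2\a)}z$, and compute
\begin{equation*}
    \log|w| = \log|z| + \frac{1}{2\a}, \qquad \a\log^2|w| = \a\log^2|z| + \log|z| + \frac{1}{4\a}.
\end{equation*}
Substituting into $g(z) = e^{-1/(4\a)} z^{-1} f(w)$ gives
\begin{equation*}
    |g(z)|e^{-\f(z)} = e^{-\frac{1}{4\a}} |z|^{-1} |f(w)| e^{-\a\log^2|z|} = |f(w)| e^{-\a\log^2|w|} = |f(w)|e^{-\f(w)},
\end{equation*}
which is the desired pointwise identity. Note also that since $f \in \Hol(\C_0)$ and $z^{-1}$ is holomorphic on $\C_0$, $T_\a f \in \Hol(\C_0)$, and the map $z \mapsto e^{1/(2\a)} z$ is a bijection of $\C_0$.

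For $p = \infty$, taking the supremum over $z \in \C_0$ in the pointwise identity yields $\|T_\a f\|_{\F^{\infty}_\a} = \|f\|_{\F^{\infty}_\a}$ by the bijectivity of the dilation. For $0 < p < \infty$, I would raise the pointwise identity to the $p$-th power and integrate against $dA$. The change of variables $w = e^{1/(2\a)} z$ has Jacobian $dA(w) = e^{1/\a}\, dA(z)$, so
\begin{equation*}
    \|T_\a f\|_{\F^p_\a}^p = \int_{\C_0} |f(w)|^p e^{-p\f(w)} e^{-1/\a}\, dA(w) = e^{-1/\a} \|f\|_{\F^p_\a}^p,
\end{equation*}
which shows that $e^{1/(p\a)} T_\a$ preserves the $\F^p_\a$-norm. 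Invertibility of $T_\a$ on each space follows from the explicit inverse $(T_\a^{-1} g)(w) = e^{-1/(4\a)} w\, g(e^{-1/(2\a)} w)$, which by the same reasoning maps the corresponding space to itself; hence $T_\a$ (respectively $e^{1/(p\a)} T_\a$) is an isometric automorphism.

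There is no real obstacle here; the only point to be careful with is the bookkeeping in completing the square for $\log^2|w|$ so that the constant $e^{-1/(4\a)}$ and the factor $|z|^{-1}$ in the definition of $T_\a$ exactly compensate the cross term and the constant produced by the shift of $\log|z|$ by $1/(2\a)$, and likewise for tracking the Jacobian $e^{1/\a}$ in the $L^p$ case.
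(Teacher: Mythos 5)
Your proof is correct and follows essentially the same route as the paper: a direct computation verifying the pointwise identity $|g(z)|e^{-\f(z)} = |f(w)|e^{-\f(w)}$ by expanding $\a\log^2|w|$ after the shift $\log|w| = \log|z| + \tfrac{1}{2\a}$. The paper stops at this identity and treats the sup/Jacobian deductions as immediate, whereas you spell them out; this is a matter of detail, not of method.
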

    \begin{proof}
        It is sufficient to prove
        \begin{equation*}
            |g(z)|e^{-\f(z)} = |f(w)|e^{-\f(w)}, \quad w = e^{\frac{1}{2\a}} z.
        \end{equation*}
        Indeed, we have
        \begin{align*}
            |g(z)|e^{-\f(z)} &=  e^{-\frac{1}{4 \a}} |z|^{-1} |f(e^{ \frac{1}{2\a}} z)| e^{-\a \log^2 |z|} \\
            &= e^{-\frac{1}{4 \a}} e^{\frac{1}{2\a}} |w|^{-1} |f(w)| e^{-\a \left( \log|w| - \frac{1}{2\a} \right)^2} \\
            &= e^{\frac{1}{4 \a}} |w|^{-1} |f(w)| e^{-\a \log^2|w|} e^{\log|w|} e^{-\frac{1}{4\a}} = |f(w)|e^{-\f(w)}.
        \end{align*}
    \end{proof}
    \begin{crl}
        For any $n \in \Z$ the operator $T_\a^n$ satisfies that for $g = T_\a^n f$ we have
        \begin{equation*}
            |g(z)|e^{-\f(z)} = |f(w)|e^{-\f(w)}, \quad w = e^{\frac{n}{2\a}} z.   
        \end{equation*}
        Hence, $T_\a^n$ is an isometric automorphism of $\F^{\infty}_\a$ and $e^{\frac{n}{p\a}} T^n_\a$ is an isometric automorphism of $\F^{p}_\a$ for any $0 < p < \infty$.
    \end{crl}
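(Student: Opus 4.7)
The approach is a straightforward induction on $n$, using the proposition as the base step. The case $n = 0$ is trivial, and the case $n = 1$ is exactly Proposition \ref{prop: shift operator}. For the inductive step with $n \ge 1$, I would set $h = T_\a^{n+1} f = T_\a(T_\a^n f)$, apply the proposition to relate $h(z)$ to $(T_\a^n f)(e^{1/(2\a)} z)$ via the pointwise identity, and then invoke the induction hypothesis applied at the point $e^{1/(2\a)} z$ to rewrite $(T_\a^n f)(e^{1/(2\a)} z)$ in terms of $f$ evaluated at $e^{n/(2\a)} \cdot e^{1/(2\a)} z = e^{(n+1)/(2\a)} z$. Chaining the two identities yields the desired formula for $n + 1$.

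For negative exponents, I would use that the proposition shows $T_\a$ is an isometric bijection of $\F^\infty_\a$, hence invertible; running the identity of the proposition backwards (switching the roles of $f$ and $g = T_\a f$ and substituting $z \mapsto e^{-1/(2\a)} z$) establishes the analogous identity for $T_\a^{-1}$, namely the shift $w = e^{-1/(2\a)} z$. An induction identical to the one above, applied to $T_\a^{-1}$, then covers all $n < 0$. The pointwise identity for $T_\a^n$ immediately implies that $T_\a^n$ is an isometric automorphism of $\F^\infty_\a$.

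For the $\F^p_\a$ case with $0 < p < \infty$, I would use the pointwise identity together with the change of variables $w = e^{n/(2\a)} z$. The Jacobian contributes $dA(w) = e^{n/\a}\, dA(z)$, so
\begin{equation*}
    \|T_\a^n f\|^p_{\F^p_\a} = \int_{\C_0} |(T_\a^n f)(z)|^p e^{-p\f(z)} dA(z) = \int_{\C_0} |f(w)|^p e^{-p\f(w)} e^{-n/\a} dA(w) = e^{-n/\a} \|f\|^p_{\F^p_\a}.
\end{equation*}
Consequently, multiplying by $e^{n/(p\a)}$ rescales the norm by exactly $e^{n/\a}$ inside the $p$-th power, which compensates the Jacobian and yields an isometry.

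There is no real obstacle here; the only point requiring any care is the direction of the exponent in the shift and the sign of the Jacobian factor, which must be tracked consistently to confirm that the compensating constant is $e^{n/(p\a)}$ rather than its reciprocal.
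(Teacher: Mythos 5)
Your proof is correct and essentially matches the paper's (implicit) argument: the paper obtains the corollary by iterating Proposition \ref{prop: shift operator}, i.e.\ the pointwise identity composes under powers of $T_\a$ (with the explicit formula $(T^n_\a f)(z)=e^{-n^2/(4\a)}z^{-n}f(e^{n/(2\a)}z)$ verified separately by the same kind of induction you describe). Your change-of-variables computation with $dA(w)=e^{n/\a}dA(z)$ is a valid, slightly more explicit way to get the $\F^p_\a$ statement, which the paper instead gets by noting that $e^{\frac{n}{p\a}}T^n_\a=\bigl(e^{\frac{1}{p\a}}T_\a\bigr)^n$ is a power of the isometric automorphism from the proposition.
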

    \begin{rmk}
        We have the explicit expression
        \begin{equation*}
            (T^n_{\a}f)(z) = e^{-\frac{n^2}{4 \a}} z^{-n} f(e^{\frac{n}{2\a}}z), \quad n \in \Z.
        \end{equation*}
    \end{rmk}
    \begin{proof}
        Denote the expression on the right by $S^nf$. It is easy to see that $T_\a S^n = S^{n+1}, \ n \in \Z$. Thus, $T^n_{\a} = S^n, \ n \in \Z$ follows from the fact that $S^0 = \id = T^0_{\a}$.
    \end{proof}
    \begin{rmk} \label{rmk: ShS is SS for any c}
        It follows from the existence of $T^n_{\a}$ that for any $\L$ the sampling constants $K_{e^t\L}$, \eqref{eq: K Lambda definiton}, and $A_{e^t\L}$, $B_{e^t\L}$, \eqref{eq: A B Lambda definiton}, are $\frac{1}{2\a}$-periodic. Together with the continuity result of the type \cite[Theorem 2, p.344]{Beurling} this periodicity implies that $\L$ being ShS is equivalent to $c \L$ being SS for all $c > 0$.
    \end{rmk}
    \begin{crl} \label{crl: zn norm estimate}
        For a fixed $0 < p \le \infty$
        \begin{equation*}
            \|z^n\|_{\F^p_{\a}} \asymp e^{\frac{(n + 2/p)^2}{4\a}}, \quad n \in \Z.
        \end{equation*}
    \end{crl}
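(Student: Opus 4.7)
The plan is to apply the isometric automorphisms furnished by the previous corollary directly to the constant function $f \equiv 1$. The explicit formula $(T^n_\a \cdot 1)(z) = e^{-n^2/(4\a)} z^{-n}$ reduces the claim to a computation of the norm of the constant function $1$ in $\F^p_\a$, which is a positive constant depending only on $\a$ and $p$, and then to completing the square in the exponent.

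For $p = \infty$, the isometry of $T^n_\a$ gives
\begin{equation*}
    \|z^{-n}\|_{\F^\infty_\a} = e^{n^2/(4\a)} \|1\|_{\F^\infty_\a},
\end{equation*}
and $\|1\|_{\F^\infty_\a} = \sup_{z \in \C_0} e^{-\a \log^2 |z|} = 1$. Replacing $n$ by $-n$ yields exactly $\|z^n\|_{\F^\infty_\a} = e^{n^2/(4\a)}$, which matches $(n+2/p)^2/(4\a)$ when $p = \infty$.

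For $0 < p < \infty$, the isometry of $e^{n/(p\a)} T^n_\a$ gives $\|z^{-n}\|_{\F^p_\a} = e^{n^2/(4\a) - n/(p\a)} \|1\|_{\F^p_\a}$. Replacing $n$ by $-n$ and completing the square via the identity $n^2/(4\a) + n/(p\a) = (n+2/p)^2/(4\a) - 1/(p^2 \a)$ produces
\begin{equation*}
    \|z^n\|_{\F^p_\a} = e^{-1/(p^2 \a)} \|1\|_{\F^p_\a} \cdot e^{(n+2/p)^2/(4\a)},
\end{equation*}
so it only remains to verify that $\|1\|_{\F^p_\a}$ is finite and positive. Passing to polar coordinates and substituting $r = e^t$ transforms $\int_{\C_0} e^{-p\a \log^2|z|}\, dA(z)$ into a multiple of the Gaussian integral $\int_\R e^{2t - p\a t^2}\, dt$, which is convergent since $p\a > 0$; positivity is clear.

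There is no real obstacle: the statement is essentially a restatement of the fact that $T^n_\a$, suitably normalized, sends the constant function $1$ to a scalar multiple of $z^{-n}$ while preserving the norm. The quadratic expression $(n+2/p)^2/(4\a)$ is simply what arises after completing the square in the exponent produced by the normalization factor $e^{n/(p\a)}$.
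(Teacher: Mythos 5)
Your proposal is correct and follows essentially the same route as the paper: apply the normalized isometric shift $e^{\pm n/(p\a)}T^{\pm n}_\a$ to the constant function $1$ and complete the square in the exponent (your passage through $z^{-n}$ and the substitution $n \mapsto -n$ is only a cosmetic difference from the paper's direct use of $e^{-n/(p\a)}T^{-n}_\a$). Your explicit check that $\|1\|_{\F^p_\a}$ is finite and positive via the Gaussian integral is a harmless addition the paper leaves implicit.
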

    \begin{proof}
        Applying the isometry $e^{\frac{-n}{p\a}} T^{-n}_\a$ to $f = 1$ we get $e^{\frac{-n}{p\a}} e^{-\frac{n^2}{4\a}}z^n$. Hence,
        \begin{equation*}
            \|1\|_{\F^p_{\a}} = \|e^{\frac{-n}{p\a}} e^{-\frac{n^2}{4\a}}z^n\|_{\F^p_{\a}},
        \end{equation*}
        so that
        \begin{equation*}
            \|z^n\|_{\F^p_{\a}} \asymp e^{\frac{n}{p\a}} e^{\frac{n^2}{4\a}} \asymp e^{\frac{(n + 2/p)^2}{4\a}}, \quad n \in \Z.
        \end{equation*}
    \end{proof}

    \subsection{Beurling's balayage}
    Now, let us introduce the main concepts that are involved in Beurling's balayage technique. 
    \subsubsection{Weak limits of sequences}
    Given two finite sets $A$ and $B$ in $\C_0$ we say that $B$ is \emph{an $\e$-perturbation} of $A$ if there is a one-to-one correspondence between $A$ and $B$ such that the $d_{\log}$ distance between each point of $A$ and its image in $B$ does not exceed $\e$.
    \begin{definition}
        Suppose $\L_n \subset \C_0, \ n \in \N$ are separated with $d_{\L_n} \ge d$. We say that $\L_n$ \emph{weakly converge to} $\L$ if for any $\e > 0$ and any $a, b > 0$ there exists $N$ such that for $n > N$ the set 
        \begin{equation*}
            \{ \l \in \L: a < |\l| < b \}
        \end{equation*}
        is an $\e$-perturbation of
        \begin{equation*}
            \{ \l \in \L_n: a < |\l| < b \}.
        \end{equation*}
    \end{definition}
    In this case one easily verifies that
    \begin{equation*}
        d_{\L} \ge \limsup_{n \to \infty} d_{\L_n}.
    \end{equation*}
    For a set $\L \subset \C_0$, we denote by $R_{\L} = R: \F^{\infty}_\a \to \ell^{\infty}(\L)$ the restriction operator
    \begin{equation*}
        R: f \mapsto (f(\l)e^{-\a \log^2|\l|})_{\l \in \L}.
    \end{equation*}
    Note that $f \mapsto fe^{-\f}$ is an embedding that makes $\F^{\infty}_\a$ a closed subspace of $L^{\infty}(\C_0)$. Hence,
    \begin{equation*}
        \F^{\infty}_\a = \left( L^1(\C_0)/(\F^{\infty}_\a)^\perp \right)^*.
    \end{equation*}
    We denote $L^1(\C_0)/(\F^{\infty}_\a)^\perp$ by $X$. So, for any $h \in X$, the action of $f \in \F^{\infty}_\a$ on $h$ is
    \begin{equation*}
        (h, f) = \int_{\C_0} h(z)f(z)e^{-\f(z)} dA(z), 
    \end{equation*}
    where $h$ denotes any representative of its equivalence class.
        
    We claim that for any $w \in \C_0$ there is a $\k_w \in X$ such that
    \begin{equation*}
        (\k_w, g) = g(w)e^{-\f(w)}, \quad  g \in \F^{\infty}_\a.   
    \end{equation*}
    Indeed, we can take an $L^1(\C_0)$ function
    \begin{equation*}
        h(z) = \frac{1}{\pi \e^2} \chi_{w + \e \D}(z) e^{\f(z) - \f(w)},   
    \end{equation*}
    where $\e < |w|$. Then, for any $f \in \F^{\infty}_\a$
    \begin{equation*}
        (h, f) = \frac{e^{-\f(w)}}{\pi \e^2} \int_{w + \e \D} f(z) dA(z)  = f(w) e^{-\f(w)}.
    \end{equation*}
    So, the equivalence class of $h$ is the desired $\k_w$. Note that
    \begin{equation*}
        \| \k_w \|_X = \sup_{\| g \|_{\F^{\infty}_{\a}} \le 1 } |(\k_w, g)| = \sup_{\| g \|_{\F^{\infty}_{\a}} \le 1 } |g(w)|e^{-\f(w)} \le 1.
    \end{equation*}
    Hence, a map $T_\L = T: \ell^1(\L) \to X$, 
    \begin{equation*}
        T: (c_{\l})_{\l \in \L} \mapsto \sum_{\l \in \L} c_{\l} \k_{\l},
    \end{equation*}
    is well defined. It is easy to see that $R = T^*$.

    Let us restate the sampling property of $\L$ using this operator $T$.
    \begin{prop} \label{prop: T^* = R equivalence}
        $\L$ is SS for $\F^{\infty}_\a$ with $K_{\L} \le K$ if and only if for any $h \in X, \ \| h \|_X \le 1$ there is $c \in \ell^1(\L), \ \| c \|_1 \le K$ such that $Tc = h$.
    \end{prop}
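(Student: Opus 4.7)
The key observation is that the proposition is a standard Banach-space duality in disguise. Since $\F^{\infty}_{\a} = X^{*}$ and $R = T^{*}$, the sampling inequality $\|f\|_{\F^{\infty}_{\a}} \le K \|Rf\|_{\infty}$ for every $f \in \F^{\infty}_{\a}$ is exactly the statement that $T^{*}$ is bounded below with constant $1/K$. The plan is to prove this is equivalent to $T$ mapping the closed $\ell^{1}(\L)$-ball of radius $K$ onto the closed unit ball of $X$ (surjectivity with quantitative control of preimages).

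For the easy "if" direction I would argue by Hahn--Banach duality. Fix $f \in \F^{\infty}_{\a}$; for every $\e > 0$ choose $h \in X$ with $\|h\|_{X} \le 1$ and $(h,f) \ge \|f\|_{\F^{\infty}_{\a}} - \e$, and use the hypothesis to produce $c \in \ell^{1}(\L)$ with $Tc = h$ and $\|c\|_{1} \le K$. The adjoint identity $(Tc, f) = (c, T^{*}f)$ together with $\|T^{*}f\|_{\infty} = \|f|_{\L}\|_{\infty, \a}$ gives $\|f\|_{\F^{\infty}_{\a}} - \e \le K \|f|_{\L}\|_{\infty, \a}$, and $\e \to 0$ yields $K_{\L} \le K$.

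For the "only if" direction I would set $C = T(B_{\ell^{1}(\L)}(0, K))$, note that it is convex and balanced in $X$, and compute its polar: for $f \in X^{*}$, $\sup_{g \in C} |(g,f)| = K \|T^{*}f\|_{\infty}$. The sampling assumption combined with $|(h,f)| \le \|h\|_{X}\|f\|_{\F^{\infty}_{\a}}$ then forces $|(h,f)| \le K\|T^{*}f\|_{\infty}$ for all $h \in B_{X}$ and $f \in X^{*}$, so the bipolar theorem gives $B_{X} \subseteq \overline{C}$ (norm closure in $X$). A standard geometric-series iteration --- successively correcting a residual $h - T(c_{1} + \dots + c_{n})$ of norm at most $\delta^{n}$ by an element of $\ell^{1}$-norm at most $K \delta^{n-1}$ and summing the resulting series in the complete space $\ell^{1}(\L)$ --- upgrades density to exact surjectivity, producing for any $\delta \in (0,1)$ a preimage of norm at most $K/(1 - \delta)$.

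The main obstacle is the gap between the iteration's output $\|c\|_{1} \le K/(1 - \delta)$ and the exact bound $\|c\|_{1} \le K$ demanded in the statement. Because $K_{\L}$ is defined as an infimum (see \eqref{eq: K Lambda definiton}), this gap is cosmetic: both sides of the equivalence have the same infimal admissible $K$, so the equivalence as phrased follows after absorbing the arbitrary $\delta$. Everything else is bookkeeping of the identifications $R = T^{*}$, $(Tc, f) = (c, T^{*}f)$, and the predual structure on $X$ provided by the reproducing-type functionals $\k_{w}$.
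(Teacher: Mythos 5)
Your proposal is correct in substance and, at bottom, takes the same route as the paper: the paper's proof of Proposition \ref{prop: T^* = R equivalence} consists of citing the classical Banach duality theorem (an operator admits preimages with norm control if and only if its adjoint is bounded below) and applying it with $Y_1 = \ell^1(\L)$, $Y_2 = X$, $A = T$, using the identification $R = T^*$; what you do is reprove that black box from scratch, the easy direction by Hahn--Banach exactly as in the standard argument, and the hard direction by the polar computation $\sup_{\|c\|_1 \le K}|(Tc,f)| = K\|T^*f\|_{\infty}$, the bipolar theorem giving $B_X \subseteq \overline{T(B_{\ell^1(\L)}(0,K))}$, and the geometric-series iteration upgrading density to surjectivity. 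The one point to flag is the constant: your iteration produces preimages of norm at most $K/(1-\delta)$, i.e.\ at most $K'$ for every $K' > K$, and your closing remark that this is ``cosmetic'' is not by itself a proof of the statement with the exact constant $K$ --- in general one cannot pass from preimages of norm $\le K'$ for all $K' > K$ to a preimage of norm $\le K$ without some compactness (the fiber $T^{-1}(h)$ need not be weak-star closed in $\ell^1(\L)$), and indeed the general duality theorem is only exact in its open-ball formulation. This imprecision is shared by the paper's own citation, and it is immaterial where the proposition is actually used, namely Lemma \ref{lem: weak limit}: an $\e$-loss in the bounds $\|c^{(n)}\|_1 \le K$ is washed out after passing to the weak-star limit of the measures $\mu_n$, so your version of the proposition would serve the rest of the argument equally well; still, if you want the statement exactly as phrased, you should either prove the attainment or restate it with ``for every $K' > K$''.
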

    \begin{proof}
        The proposition follows from a general theorem by Banach.
        \begin{thrm*}
            Suppose $Y_1, Y_2$ are Banach spaces and $A:Y_1 \to Y_2$ is a bounded operator. Then the following are equivalent:
            \begin{itemize}
                \item $K > 0$ is such that for any $y_2 \in Y_2$ there is $y_1 \in Y_1: \|y_1\| \le K \|y_2\|$ and $Ay_1 = y_2$,
                \item $K\|A^* y^*_2\| \ge \|y^*_2\|$, for any $y^*_2 \in Y^*_2$.
            \end{itemize}    
        \end{thrm*}
        It remains to apply this theorem with $Y_1 = \ell^1(\L)$, $Y_2 = X$ and $A = T$.
    \end{proof}
    One of the key properties of weak convergence is the conservation of the sampling property.
    \begin{lem} \label{lem: weak limit}
        Suppose $\L_n \subset \C_0$, $d_{\L_n} \ge d$ weakly converge to $\L$ and $\L_n$ are SS for $\F^{\infty}_\a$ with $K_{\L_n} \le K$ for some $K > 0$ and all $n \in \N$. Then $\L$ is SS for $\F^{\infty}_\a$ and $K_\L \le K$.
    \end{lem}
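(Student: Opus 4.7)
The strategy is to verify the predual criterion of Proposition \ref{prop: T^* = R equivalence}: it suffices to produce, for every $h \in X$ with $\|h\|_X \le 1$, a coefficient sequence $c \in \ell^1(\L)$ with $\|c\|_1 \le K$ and $T_\L c = h$. For each $n$ the hypothesis yields $c^{(n)} \in \ell^1(\L_n)$ satisfying $T_{\L_n} c^{(n)} = h$ and $\|c^{(n)}\|_1 \le K$. The plan is to transport these coefficients to sequences on $\L$, extract a subsequential limit, and check the limiting identity in $X$.

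For the transport, I combine the weak convergence $\L_n \to \L$ with the uniform separation $d_{\L_n} \ge d$: on any compact annulus $A_R = \{e^{-R} < |z| < e^R\}$ and for $n$ large enough, there is a bijection $\pi_n$ between $\L_n \cap A_R$ and $\L \cap A_R$ moving each point by $d_{\log}$-distance at most $\e_n \to 0$. Using this (with $R = R_n \to \infty$ chosen diagonally), I define $\tilde c^{(n)} \in \ell^1(\L)$ with $\|\tilde c^{(n)}\|_1 \le K$. The pointwise bound $|\tilde c^{(n)}_\l| \le K$ together with a diagonal Helly-type extraction produces a subsequence along which $\tilde c^{(n)}_\l \to c_\l$ for every $\l \in \L$, and Fatou's lemma gives $\|c\|_1 \le K$.

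To verify $T_\L c = h$, I first test against the Laurent monomials $f(z) = z^m$. The function $F_m(z) = z^m e^{-\f(z)}$ is continuous on $\C_0$, bounded by $\|z^m\|_{\F^\infty_\a} \asymp e^{m^2/(4\a)}$ from Corollary \ref{crl: zn norm estimate}, and vanishes at $0$ and $\infty$ since the exponent $m\log|z| - \a\log^2|z|$ is concave in $\log|z|$. Given $\e > 0$, choose $R$ so that $|F_m| \le \e$ off $A_R$. For $n$ large, split both $(h, z^m) = \sum_\mu c^{(n)}_\mu F_m(\mu)$ and $\sum_\l \tilde c^{(n)}_\l F_m(\l)$ into an $A_R$-part and a tail: the tails are uniformly bounded by $K\e$, while on $A_R$ the bijection $\pi_n$, the uniform continuity of $F_m$ on $\overline{A_R}$, and the pointwise limits $\tilde c^{(n)}_\l \to c_\l$ together force the finite $A_R$-sums to converge to $\sum_{\l \in \L \cap A_R} c_\l F_m(\l)$. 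Letting $\e \to 0$ yields $(h, z^m) = (T_\L c, z^m)$ for every $m \in \Z$.

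These identities extend to all $f \in \F^\infty_\a$ by the weak-* density of Laurent polynomials: $\|z^n\|_{\F^\infty_\a} \asymp e^{n^2/(4\a)}$ together with the elementary estimate $|\hat f_n| \lesssim \|f\|_{\F^\infty_\a} / \|z^n\|_{\F^\infty_\a}$ identifies $\F^\infty_\a$ with a weighted $\ell^\infty(\Z)$ compatibly with its predual, and within this identification finitely supported sequences are plainly weak-* dense. The main obstacle is the tail control in the previous step: the only uniform information on $c^{(n)}$ is the $\ell^1$-bound, so uniform smallness of the tails must come from the decay of the test function at $0$ and $\infty$. This is exactly why the argument has to be routed through Laurent monomials (whose $F_m$ lies in $c_0$) and then upgraded by weak-* density, rather than being run directly against a general $f \in \F^\infty_\a$, whose $f e^{-\f}$ need not vanish at the ends.
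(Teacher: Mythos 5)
Your argument is correct, and it diverges from the paper's proof precisely at the step where the real difficulty sits. Both proofs begin the same way: the predual criterion of Proposition \ref{prop: T^* = R equivalence}, the coefficients $c^{(n)}$ with $\|c^{(n)}\|_1\le K$, and a compactness step producing a limit $c\in\ell^1(\L)$ with $\|c\|_1\le K$ --- the paper extracts a weak-$*$ limit of the measures $\sum_{\mu\in\L_n}c^{(n)}_{\mu}\de_{\mu}$ in $\M(\C_0)=(C_0(\C_0))^*$, while you transport coefficients through the near-bijections and use a diagonal extraction plus Fatou; these are interchangeable. The divergence is in verifying $T_{\L}c=h$, where the obstruction is that $fe^{-\f}$ need not vanish at $0$ and $\infty$ for a general $f\in\F^{\infty}_{\a}$. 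The paper fixes a general $f$ and repairs its behaviour at the ends by hand, replacing $f(z)$ by $z\,(f(z)-f(s_j))/(z-s_j)$ (and the analogous quotient at $\infty$) with $f(s_j)\to 0$, controlling norms via $T^n_{\a}$ and the maximum principle, and then passing to the limit by dominated convergence. You instead test only against Laurent monomials, whose weighted versions $z^m e^{-\f}$ do lie in $C_0(\C_0)$, and then upgrade by density. The upgrade is the one place you are too brisk: asserting that the coefficient map identifies $\F^{\infty}_{\a}$ with a weighted $\ell^{\infty}(\Z)$ ``compatibly with its predual'' is not self-evident (a Banach-space isomorphism of dual spaces need not be a weak-$*$ homeomorphism unless it is an adjoint), but it is also unnecessary: from $|\hat f_m|\le e^{-m^2/(4\a)}\|f\|_{\F^{\infty}_{\a}}$ and $\sum_m e^{-m^2/(4\a)}r^m\lesssim e^{\a\log^2 r}$ one gets a uniform bound $\|S_Nf\|_{\F^{\infty}_{\a}}\lesssim\|f\|_{\F^{\infty}_{\a}}$ for the partial sums of the Laurent series, and since $S_Nf\to f$ locally uniformly, dominated convergence (against the $L^1$ representative of $h$ on one side and against $c\in\ell^1(\L)$ on the other) passes the identity $(h,S_Nf)=(T_{\L}c,S_Nf)$ to the limit; this is exactly the weak-$*$ sequential density you need, and it mirrors the paper's own Lemma \ref{lem: Laurent pol density} for $p<\infty$. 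With that short justification your proof is complete. In terms of trade-offs, the paper's trick needs no coefficient estimates and treats each $f$ individually, while your monomial-plus-density route is arguably more transparent and reuses machinery the paper develops anyway; both ultimately rest on the same dominated-convergence mechanism for taming the ends.
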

    \begin{proof}
    By Proposition \ref{prop: T^* = R equivalence} it is enough to show that given any $h \in X, \ \| h \|_X \le 1$ there is $c \in \ell^1(\L), \ \|c\|_1 \le K$ such that $T_{\L} c = h$. Fix such $h \in X$. By Proposition \ref{prop: T^* = R equivalence} we know that there exists $c^{(n)} \in \ell^1(\L_n), \ \|c^{(n)}\|_1 \le K$ such that $T_{\L_n} c^{(n)} = h$. This means that for any $f \in \F^{\infty}_\a$
    \begin{align*}
        (T_{\L_n} c^{(n)}, f) = (h, f) & \iff (c^{(n)}, R_{\L_n} f) = (h, f) \\
        & \iff \sum_{\l^{(n)} \in \L_n} f(\l^{(n)}) e^{-\f(\l^{(n)})} c^{(n)}_{\l^{(n)}} = (h, f) \\
        & \iff \int_{\C_0} f(z)e^{-\f(z)} d\mu_n(z) = (h, f),
    \end{align*}
    where 
    \begin{equation*}
        \mu_n = \sum_{\l^{(n)} \in \L_n} c^{(n)}_{\l^{(n)}} \de_{\l^{(n)}}.
    \end{equation*}
    
    Consider $\M(\C_0)$, the space of all regular finite complex Borel measures on $\C_0$. $\M(\C_0) = (C_0(\C_0))^*$, where $C_0(\C_0)$ is the space of continuous functions on $\C_0$ vanishing at $0$ and $\infty$. 
    Note that $\mu_n \in \M(\C_0)$ and $\|\mu_n\| = \|c^{(n)}\|_1 \le K$. So, by passing to a subsequence we can assume that $\mu_n \xrightarrow{w^*} \mu$. First, $\|\mu\| \le \liminf_{n \to \infty} \|\mu_n\| \le K$. Second, $\supp \mu \subset \L$, since for any $f \in C_0(\C_0)$ with a compact support such that $\supp f \subset \C_0 \setminus \L$ we have $(f, \mu) = \lim (f, \mu_n) = 0$,  as $(f, \mu_n) = 0$ for large $n$, since $\supp f$ is compact and $\L_n \to \L$ weakly. We conclude that 
    \begin{equation*}
        \mu = \sum_{\l \in \L} c_\l \de_{\l}, \quad \|\mu\| = \|c\|_1.    
    \end{equation*}
    We want to show that $T_{\L} c = h$. As before, it is equivalent to showing that for any $f \in \F^{\infty}_\a$
    \begin{equation*}
        (fe^{-\f}, \mu) = (h, f).
    \end{equation*}
    We know that for all $n$
    \begin{equation*}
        (fe^{-\f}, \mu_n) = (h, f).
    \end{equation*}
    If $fe^{-\f} \in C_0(\C_0)$, then we get the desired equality by passing to the limit $n \to \infty$ via the $w^*$-convergence. In particular, if $f$ does not have essential singularities at $0$ and $\infty$, then 
    \begin{equation*}
        (fe^{-\f}, \mu) = (h, f)    
    \end{equation*}
    holds.
    
    Now suppose that $f$ has an essential singularity at $0$ and $fe^{-\f}$ vanishes at $\infty$. Then there exists a sequence $s_j \in \C_0$ such that $s_j \to 0$ and $f(s_j) \to 0$. We consider
    \begin{equation*}
        g_j(z) = z\frac{f(z) - f(s_j)}{z - s_j}. 
    \end{equation*}
    Note that $g_j e^{-\f}$ vanishes at $0$ and $\infty$, hence
    \begin{equation*}
        \int_{\C_0} g_j(z)e^{-\f(z)} d \mu(z) = \int_{\C_0} g_j(z)e^{-\f(z)} h(z) dA(z).
    \end{equation*}
    Applying $T^n_{\a}$ and using the maximum principle, it is easy to see that $\|g_j\|_{\F^{\infty}_{\a}} \lesssim \|f\|_{\F^{\infty}_{\a}}$. Thus, since $g_j(z) \to f(z)$ pointwise, we can take the limit under the integral to get
    \begin{equation*}
        \int_{\C_0} f(z)e^{-\f(z)} d \mu(z) = \int_{\C_0} f(z)e^{-\f(z)} h(z) dA(z).
    \end{equation*}
    For the remaining case, suppose that $f$ has an essential singularity at $\infty$. Then there exists a sequence $t_j \in \C_0$ such that $t_j \to \infty$ and $f(t_j) \to 0$. We consider
    \begin{equation*}
        f_j(z) = t_j \frac{f(z) - f(t_j)}{t_j - z}.
    \end{equation*}
    $f_j e^{-\f}$ vanish at $\infty$, so by the previous cases
    \begin{equation*}
        \int_{\C_0} f_j(z)e^{-\f(z)} d \mu(z) = \int_{\C_0} f_j(z)e^{-\f(z)} h(z) d A(z).
    \end{equation*}
    Similarly to before, by applying $T^n_{\a}$ and using the maximum principle, it is easy to see that $\|f_j\|_{\F^{\infty}_{\a}} \lesssim \|f\|_{\F^{\infty}_{\a}}$. Thus, since $f_j(z) \to f(z)$ pointwise, we can take the limit under the integral to get
    \begin{equation*}
        \int_{\C_0} f(z)e^{-\f(z)} d \mu(z) = \int_{\C_0} f(z)e^{-\f(z)} h(z) d A(z).
    \end{equation*}
    \end{proof}
    Denote $W(\L)$ the set of all weak limits of shifts $c\L, \ c > 0$. Note that any weak limit of sets in $W(\L)$ also belongs to $W(\L)$ and for any $\tilde \L \in W(\L)$ we have $D^-_{\log}(\tilde \L) \ge D^-_{\log}(\L)$.
    \begin{crl}
        If separated $\L$ is ShS for $\F^{\infty}_\a$ with a uniform sampling constant $K > 0$, then any $\tilde \L \in W(\L)$ is SS for $\F^{\infty}_\a$ with the same sampling constant $K$. In particular, $\tilde \L$ is ShS for $\F^{\infty}_\a$ with a uniform sampling constant $K$.
    \end{crl}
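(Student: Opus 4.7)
The plan is to derive this directly from Lemma \ref{lem: weak limit}, essentially by chasing definitions and invoking the lemma (possibly twice for the ``in particular'' clause). The corollary bundles two assertions: first, that weak limits of shifts inherit sampling with the same constant; second, that the weak-limit set is itself shift-invariant sampling with the same uniform constant.

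First I would fix $\tilde\L \in W(\L)$, so by definition there is a sequence $c_n > 0$ such that $c_n\L$ weakly converges to $\tilde\L$. The key preliminary observation is that $d_{\log}$ is invariant under multiplication by positive scalars, since multiplying $z$ and $w$ by the same $c > 0$ changes neither $\log|z| - \log|w|$ nor the unit vectors $z/|z|, w/|w|$. Hence $d_{c_n \L} = d_{\L}$ for every $n$, so the uniform separation hypothesis of Lemma \ref{lem: weak limit} is satisfied with $d = d_{\L}$. By the ShS hypothesis, each $c_n \L$ is SS for $\F^{\infty}_\a$ with $K_{c_n\L} \le K$. Lemma \ref{lem: weak limit} then applies directly and yields that $\tilde\L$ is SS for $\F^{\infty}_\a$ with $K_{\tilde\L} \le K$.

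For the ``in particular'' assertion, I would verify that $W(\L)$ is closed under positive scaling in a strong uniform sense: for any $c > 0$, the set $c\tilde\L$ is a weak limit of $cc_n\L$, since multiplying by a fixed $c > 0$ is a $d_{\log}$-isometry and hence preserves $\e$-perturbations on any annulus $\{a < |z| < b\}$ (this is again immediate from the invariance of $d_{\log}$ under positive scaling, after a harmless relabeling of the annulus radii). Since $\L$ is ShS with uniform constant $K$, each $cc_n\L$ is SS with $K_{cc_n\L}\le K$, and again by Lemma \ref{lem: weak limit} (with the same uniform separation), $c\tilde\L$ is SS with $K_{c\tilde\L}\le K$. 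Taking the supremum over $c > 0$ gives that $\tilde\L$ is ShS with uniform constant $K$.

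I do not anticipate any real obstacle here: the entire content sits in Lemma \ref{lem: weak limit}, and the only two things to check by hand are the scaling-invariance of $d_{\log}$ (so separation survives) and the scaling-invariance of weak convergence (so $c\tilde\L$ is still accessible as a weak limit of shifts of $\L$). Both are one-line observations from the explicit form of $d_{\log}$ and the definition of weak convergence.
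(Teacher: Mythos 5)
Your proposal is correct and follows exactly the route the paper intends: the corollary is stated as an immediate consequence of Lemma \ref{lem: weak limit} together with the scale-invariance of $d_{\log}$ and the fact that $W(\L)$ is stable under positive scaling, which are precisely the two observations you verify. Nothing further is needed.
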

    \subsubsection{Uniqueness}
    We recall that $\L$ is \emph{a uniqueness set} for a set of functions $\F$ if $f(\l) = 0, \ \l \in \L$ implies $f = 0$ for any function $f \in \F$.
    The first key ingredient of Beurling's balayage is the ability to restate the sampling property into a statement that only involves the uniqueness property with the cost of considering all weak limits of a sequence. 
    \begin{lem} \label{lem: weak uniqueness}
        A separated $\L$ is ShS for $\F^{\infty}_\a$ if and only if any $\tilde \L \in W(\L)$ is a uniqueness set for $\F^{\infty}_\a$.
    \end{lem}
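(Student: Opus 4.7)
The forward direction is essentially given for free by the corollary immediately preceding the lemma: if $\L$ is ShS with uniform constant $K$, then every $\tilde\L \in W(\L)$ is SS for $\F^\infty_\a$ with constant $K$, and any sampling set is in particular a uniqueness set (if $f|_{\tilde\L} = 0$ then $\|f\|_{\F^\infty_\a} \le K \cdot 0 = 0$). So this direction is a one-liner.

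For the reverse direction, I would argue by contraposition. Suppose $\L$ is not ShS. Then there exist $c_n > 0$ with $K_{c_n\L} \to \infty$, so one can pick $f_n \in \F^\infty_\a$ with $\|f_n\|_{\F^\infty_\a} = 1$ and $\|f_n|_{c_n\L}\|_{\infty,\a} \to 0$. The goal is to extract a normal-family limit $f$ and a weak-limit set $\tilde\L \in W(\L)$ so that $f$ is a nonzero element of $\F^\infty_\a$ vanishing on $\tilde\L$, contradicting the uniqueness assumption. The key technical step is a simultaneous normalization: using Remark \ref{rmk: ShS is SS for any c}, the map $t \mapsto K_{e^t \L}$ is $\tfrac{1}{2\a}$-periodic, so after composing the shifts with suitable powers of $T_\a$ we may assume $c_n$ lies in a fixed compact subset of $\C_0$. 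Similarly, by choosing a point $z_n$ with $|f_n(z_n)|e^{-\f(z_n)} \ge \tfrac12$ and applying an appropriate power of $T_\a$, we can further assume that all such points $z_n$ lie in a fixed annulus $\{1 \le |z| \le e^{1/(2\a)}\}$; both operations preserve the sampling defect along $c_n \L$ because $T_\a$ is an isometric automorphism of $\F^\infty_\a$.

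Having made these normalizations, I would extract a subsequence along which $c_n \L$ weakly converges to some $\tilde\L \in W(\L)$ (possible since the $c_n \L$ are uniformly separated with the same constant $d_\L$, by diagonalizing over annuli), and simultaneously along which $f_n$ converges uniformly on compact subsets of $\C_0$ to some holomorphic $f$ on $\C_0$; this is a standard Montel/normal-families argument because the $f_n$ are uniformly bounded on each compact $K \subset \C_0$ by $\sup_{z \in K} e^{\f(z)}$. The limit satisfies $\|f\|_{\F^\infty_\a} \le 1$, and by the normalization, $|f(z_0)|e^{-\f(z_0)} \ge \tfrac12$ for the limit point $z_0$ of $z_n$, so $f \ne 0$. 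Finally, for any $\l \in \tilde\L$ there is a sequence $\l_n \in c_n\L$ with $\l_n \to \l$ in $d_{\log}$; since $f_n \to f$ uniformly on a compact neighborhood of $\l$ and $|f_n(\l_n)|e^{-\f(\l_n)} \to 0$, continuity gives $f(\l) = 0$. Thus $\tilde\L$ is not a uniqueness set, contradicting the hypothesis.

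The main obstacle is the joint extraction: one must simultaneously control where the $f_n$ attain a nontrivial value and where the sets $c_n\L$ converge. Without the shift-invariance provided by $T_\a^n$ (Proposition \ref{prop: shift operator}) and the periodicity in Remark \ref{rmk: ShS is SS for any c}, the peak points $z_n$ could escape to $0$ or $\infty$ and the limit function would degenerate. Once the normalization is in place, passing to weak limits of the sets and normal-family limits of the functions along a common subsequence is routine.
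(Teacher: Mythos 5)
Your proposal is correct and follows essentially the same route as the paper: contraposition, normalization of the peak points $z_n$ into the annulus $\{1 \le |z| < e^{1/(2\a)}\}$ via powers of $T_\a$ (preserving the sampling defect by isometry), and a joint extraction of a weak limit $\tilde\L \in W(\L)$ of the shifted sets together with a normal-family limit $f \ne 0$ vanishing on $\tilde\L$. The only deviation is the preliminary claim that one may also place the $c_n$ in a fixed compact set; this cannot in general be arranged simultaneously with the peak-point normalization, but it is never used, since uniform separation alone suffices to extract the weak limit of the shifted sets, exactly as in the paper.
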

    \begin{proof}
        If $\L$ is ShS for $\F^{\infty}_\a$, then by Lemma \ref{lem: weak limit} any $\tilde \L \in W(\L)$ is SS for $\F^{\infty}_\a$. In particular, it is a uniqueness set.
        
        Suppose $\L$ is not ShS for $\F^{\infty}_\a$. This means that there exist sequences $c_j > 0$ and $f_j \in \F^{\infty}_\a$ such that $1 \le \| f_j \|_{\F^{\infty}_{\a}} < 2$ and $\| f_j |_{c_j \L} \|_{\infty, \a} \to 0$. Inequality $\| f_j \|_{\F^{\infty}_{\a}} \ge 1$ implies, in particular, that there is a $z_j \in \C_0$ such that $|f_j(z_j)|e^{-\f(z_j)} \ge 1/2$. Now we uniquely choose such $n_j \in \Z$ that 
        \begin{equation*}
            e^{\frac{n_j}{2\a}} \le |z_j| < e^{\frac{n_j + 1}{2\a}}.
        \end{equation*}
        Then we consider $g_j = T^{n_j}_\a f_j$. It satisfies $1 \le \| g_j \|_{\F^{\infty}_{\a}} < 2$, $\| g_j |_{c'_j \L} \|_{\infty, \a} = \| f_j |_{c_j \L} \|_{\infty, \a} \to 0$, where $c'_j = e^{-\frac{n_j}{2\a}} c_j$, and
        \begin{equation*}
            |g_j(w_j)|e^{-\f(w_j)} \ge 1/2, \quad 1 \le |w_j| < e^{\frac{1}{2\a}},
        \end{equation*}
        where $w_j = e^{-\frac{n_j}{2\a}} z_j$. By passing to subsequences, we can assume 
        \begin{equation*}
            w_j \to w, \qquad c'_j \L \xrightarrow{w} \tilde \L, \qquad g_j \xrightarrow{u.c.c.} g,   
        \end{equation*}
        where $1 \le |w| \le e^{\frac{1}{2\a}}$, $\tilde \L \in W(\L)$ and $\| g \|_{\F^{\infty}_{\a}} \le 2$. 
        
        Since $g_j \xrightarrow{u.c.c.} g$ and $w_j \to w$, then $g_j(w_j) \to g(w)$, hence $|g(w)|e^{-\f(w)} = \lim_{j \to \infty} |g(w_j)|e^{-\f(w_j)} \ge 1/2$, so $g \ne 0$. Similarly, since $g_j \xrightarrow{u.c.c.} g$ and $c'_j \L \xrightarrow{w} \tilde \L$, then for any $\tilde \l \in \tilde \L$ $g(\tilde \l) = \lim_{j \to \infty} g_j(\l_j)$, where $\l_j \in c'_j\L$ and $\l_j \to \tilde \l$. So $\| g_j |_{c'_j \L} \|_{\infty, \a} \to 0$ implies $g(\tilde \l) = 0$. We conclude that $g|_{\tilde \L} = 0$, and so $\tilde \L \in W(\L)$ is not a uniqueness set for $\F^{\infty}_\a$. 
    \end{proof}
    \subsubsection{Scale of spaces}
    Another important element of Beurling's technique is the existence of an entire scale of function spaces together with the fact that a sequence stays sampling for sufficiently close spaces in this scale.
    \begin{lem} \label{lem: extending sampling}
        Suppose that separated $\L$ is ShS for $\F^{\infty}_\a$. Then there exists $\e_0 > 0$ such that $\L$ is ShS for any $\F^{\infty}_{\a + \e}$ with $\e < \e_0$.
    \end{lem}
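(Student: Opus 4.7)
The plan is to argue by contradiction using the weak-uniqueness characterization of shift-invariant sampling (Lemma \ref{lem: weak uniqueness}). Suppose there is a sequence $\e_n \downarrow 0$ for which $\L$ fails to be ShS for $\F^{\infty}_{\a + \e_n}$. Then, by Lemma \ref{lem: weak uniqueness}, for each $n$ there exist $\tilde \L_n \in W(\L)$ and $f_n \in \F^{\infty}_{\a + \e_n}$ with $\|f_n\|_{\F^{\infty}_{\a + \e_n}} = 1$ and $f_n|_{\tilde \L_n} = 0$. I will extract along a subsequence a weak limit $\tilde \L \in W(\L)$ together with a nonzero $g \in \F^{\infty}_\a$ that vanishes on $\tilde \L$, which contradicts the hypothesis that $\L$ is ShS for $\F^{\infty}_\a$ via Lemma \ref{lem: weak uniqueness}.

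The first step is a normalization that keeps the ``mass'' of $f_n$ in a bounded annulus. Choose $z_n \in \C_0$ with $|f_n(z_n)|e^{-(\a + \e_n)\log^2|z_n|} \ge 1/2$ and the unique $m_n \in \Z$ with $e^{m_n/(2(\a+\e_n))} \le |z_n| < e^{(m_n+1)/(2(\a+\e_n))}$. Replace $f_n$ by $g_n = T^{m_n}_{\a + \e_n} f_n$ and $\tilde \L_n$ by $\tilde \L_n' = e^{-m_n/(2(\a+\e_n))} \tilde \L_n$. Since $W(\L)$ is closed under positive dilations, $\tilde \L_n' \in W(\L)$, and since $d_{\log}$ is invariant under positive dilations, all $\tilde \L_n'$ share a common separation constant $\ge d_\L$. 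By Proposition \ref{prop: shift operator} we have $\|g_n\|_{\F^{\infty}_{\a+\e_n}} = 1$, and there is $w_n$ with $1 \le |w_n| < e^{1/(2(\a+\e_n))} \le e^{1/(2\a)}$ such that $|g_n(w_n)|e^{-(\a+\e_n)\log^2|w_n|} \ge 1/2$, while $g_n|_{\tilde \L_n'} = 0$.

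Next I would run a compactness argument. The estimate $|g_n(z)| \le e^{(\a+\e_n)\log^2|z|}$ shows that the family $(g_n)$ is locally uniformly bounded on $\C_0$, hence normal by Montel's theorem. Passing to a subsequence, arrange $w_n \to w$ in the bounded annulus, $g_n \xrightarrow{u.c.c.} g$ on $\C_0$, and $\tilde \L_n' \xrightarrow{w} \tilde \L \in W(\L)$. The pointwise bound passes to the limit, giving $|g(z)| \le e^{\a \log^2|z|}$, so $g \in \F^{\infty}_\a$ with $\|g\|_{\F^{\infty}_\a} \le 1$; the peak estimate survives as $|g(w)|e^{-\a\log^2|w|} \ge 1/2$, so $g \ne 0$. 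Finally, uniform convergence on compacta combined with weak convergence of the sets forces $g|_{\tilde \L} = 0$, exactly as in the proof of Lemma \ref{lem: weak uniqueness}: for each $\tilde \l \in \tilde \L$ one picks $\l_n \in \tilde \L_n'$ with $\l_n \to \tilde \l$ and computes $g(\tilde \l) = \lim g_n(\l_n) = 0$. This contradicts the ShS assumption on $\L$.

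The main technical obstacle is the normalization step: without shifting by $T^{m_n}_{\a+\e_n}$, the peaks of $f_n$ could drift to $0$ or $\infty$ and the limit $g$ would be identically zero, breaking the whole argument. Locking $w_n$ in the fixed annulus $\{1 \le |w| < e^{1/(2\a)}\}$ via the discrete shift symmetry of Proposition \ref{prop: shift operator} is precisely what allows a nontrivial limit; everything else is standard normal-family/diagonal compactness.
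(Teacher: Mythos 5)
Your proposal is correct and follows essentially the same route as the paper: argue by contradiction via Lemma \ref{lem: weak uniqueness}, normalize each extremal function with the discrete shift $T^{m_n}_{\a+\e_n}$ so its peak lies in the fixed annulus $\{1 \le |w| < e^{1/(2\a)}\}$, and then pass to a subsequence using normal families and weak convergence of the dilated sets to produce a nonzero $g \in \F^{\infty}_\a$ vanishing on some $\tilde\L \in W(\L)$. The paper's proof carries out exactly this normalization and compactness argument, so there is nothing to add.
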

    \begin{proof}
        Suppose that this is not the case. This means that there exists a sequence $\de_j > 0, \ \de_j \to 0$, such that $\L$ is not ShS for $\F^{\infty}_{\a + \de_j}$. By Lemma \ref{lem: weak uniqueness} it implies that for each $j$ there exists $\L_j \in W(\L)$ and $f_j \in \F^{\infty}_{\a + \de_j}, \ \| f_j \|_{\F^{\infty}_{\a + \de_j}} = 1$ such that $f_j|_{\L_j} = 0$. Inequality $\| f_j \|_{\F^{\infty}_{\a + \de_j}} \ge 1$ implies, in particular, that there is a $z_j \in \C_0$ such that $|f_j(z_j)|e^{-\f_{\a + \de_j}(z_j)} \ge 1/2$. Now we uniquely choose such $n_j \in \Z$ that 
        \begin{equation*}
            e^{\frac{n_j}{2(\a + \de_j)}} \le |z_j| < e^{\frac{n_j + 1}{2(\a + \de_j)}}.
        \end{equation*}
        Then we consider $g_j = T^{n_j}_{\a + \de_j} f_j$. It satisfies $\| g_j \|_{\F^{\infty}_{\a + \de_j}} = 1$, $g_j|_{\L'_j} = 0$, where $\L'_j = e^{-\frac{n_j}{2\a}} \L_j \in W(\L)$, and
        \begin{equation*}
            |g_j(w_j)|e^{-\f(w_j)} \ge 1/2, \quad 1 \le |w_j| < e^{\frac{1}{2(\a + \de_j)}} < e^{\frac{1}{2\a}},
        \end{equation*}
        where $w_j = e^{-\frac{n_j}{2(\a + \de_j)}} z_j$. By passing to subsequences, we can assume 
        \begin{equation*}
            w_j \to w, \qquad \L'_j \xrightarrow{w} \tilde \L, \qquad g_j \xrightarrow{u.c.c.} g,   
        \end{equation*}
        where $1 \le |w| \le e^{\frac{1}{2\a}}$, $\tilde \L \in W(\L)$ and $\| g \|_{\F^{\infty}_{\a}} \le 1$. Since $g_j \xrightarrow{u.c.c.} g$ and $w_j \to w$, then $g_j(w_j) \to g(w)$, hence $|g(w)|e^{-\f(w)} = \lim_{j \to \infty} |g(w_j)|e^{-\f(w_j)} \ge 1/2$, so $g \ne 0$. Similarly, since $g_j \xrightarrow{u.c.c.} g$ and $\L'_j \xrightarrow{w} \tilde \L$, then for any $\tilde \l \in \tilde \L$ $g(\tilde \l) = \lim_{j \to \infty} g_j(\l_j)$, where $\l_j \in \L'_j$ and $\l_j \to \tilde \l$. So $g_j|_{\L'_j} = 0$ implies $g(\tilde \l) = 0$. Thus, $g|_{\tilde \L} = 0$, and so $\tilde \L \in W(\L)$ is not a uniqueness set for $\F^{\infty}_\a$. By Lemma \ref{lem: weak uniqueness}, $\L$ is not ShS for $\F^{\infty}_\a$, a contradiction.
    \end{proof}
    \subsubsection{Density of zero sets} 
    Finally, the last part of Beurling's technique is a density estimate on zero sets. It, in turn, provides one with some understanding of the behavior of uniqueness sets. 
    \begin{lem} \label{lem: Zero set}
        Suppose $\L$ is the set of zeros of a non-zero function $f \in \F^{\infty}_\a$. Then \mbox{$D^-_{\log}(\L) \le 2 \a$}.
    \end{lem}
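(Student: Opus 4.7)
The idea is to lift to the universal cover and exploit a Jensen-type convexity identity. Set $F(w) = f(e^w)$, an entire $2\pi i$-periodic function with $|F(t+is)| \le \|f\|_{\F^{\infty}_{\a}} \, e^{\a t^2}$, and consider the horizontal mean
\[
U(t) = \frac{1}{2\pi} \int_{-\pi}^{\pi} \log|F(t+is)| \, ds.
\]
Via the principal branch of the logarithm, the zeros of $f$ correspond bijectively to the zeros of $F$ in any fundamental strip $\{\Im w \in (-\pi, \pi]\}$, and a zero of $f$ at $\l$ gives one zero of $F$ at $\log\l$ with $\Re \log\l = \log|\l|$.

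My first step is to establish, via the divergence theorem applied to $\Delta \log|F| = 2\pi \sum_{w_k} \de_{w_k}$ on the thin rectangle $[t-\e, t+\e] \times [-\pi,\pi]$ (where the contributions from the top and bottom edges cancel by the $2\pi i$-periodicity of $F$), the distributional identity
\[
U''(t) = \sum_{\l \in \L} \de(t - \log|\l|).
\]
This identity has three consequences that drive the proof: $U$ is convex on $\R$; for any $a < b$ one has
\[
U'(b) - U'(a) = \#\{\l \in \L : \log|\l| \in (a, b]\};
\]
and the pointwise growth bound on $F$ yields $U(t) \le \a t^2 + O(1)$.

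With these ingredients the conclusion is quick. Suppose toward a contradiction that $D^-_{\log}(\L) > 2\a$; then there exist $\e > 0$ and $R_0 > 0$ such that $U'(t_0 + R) - U'(t_0) \ge (2\a + \e) R$ for every $t_0 \in \R$ and every $R \ge R_0$. Telescoping this inequality along a chain of intervals of length $R_0$ gives $U'(t) \ge (2\a + \e)t + C_1$ for all $t$ sufficiently large, and integrating once more yields
\[
U(t) \ge \Bigl(\a + \tfrac{\e}{2}\Bigr) t^2 + O(t),
\]
which for large $t$ contradicts $U(t) \le \a t^2 + O(1)$, since $\a + \e/2 > \a$.

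The main obstacle I anticipate is the careful derivation of the formula for $U''$: one must bookkeep zeros of $F$ sitting on the boundary lines $\Im w = \pm \pi$ (which correspond to zeros of $f$ on the negative real axis) so that each $\l \in \L$ is counted exactly once, with the correct multiplicity, e.g. by fixing the convention that the fundamental strip is $(-\pi, \pi]$ and perturbing the test rectangle slightly off the boundary. Once this Poisson--Jensen-type identity on the cylinder is in hand, the convexity together with the quadratic upper bound $U(t) \le \a t^2 + O(1)$ completes the argument in a few lines.
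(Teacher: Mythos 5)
Your argument is correct, and it reaches the sharp threshold by a genuinely different (though related) mechanism than the paper. Both proofs lift via $F(w)=f(e^w)$ and use the pointwise bound $\log|F(t+is)|\le \a t^2+O(1)$, but the paper then applies the classical single-variable Jensen formula on large discs $\{|w|\le R\}$ (after normalizing $f(1)\ne 0$, so $F(0)\ne 0$) and must convert the hypothesis $D^-_{\log}(\L)>2\a$ into a lower bound for the disc counting function $n(t)$; this is done by slicing the disc into horizontal strips and using the $2\pi i$-periodicity of $F$, an estimate with floor functions and the Riemann-sum bound $\frac1N\sum_k\sqrt{1-k^2/N^2}\ge\pi/4$, yielding $n(t)\ge(2\a+\e)\bigl(\tfrac{t^2}{2}-Ct\bigr)$ and the contradiction $2\a+\e\le 2\a$. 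You instead work directly on the cylinder: the identity $U''=\sum_{\l\in\L}\de(\cdot-\log|\l|)$ (Jensen's formula for annuli, i.e.\ convexity of the circular means of $\log|f|$ in $\log r$, with the radial projection of the Riesz mass as second derivative) converts the density hypothesis immediately into linear growth of $U'$, and one integration gives $U(t)\ge(\a+\tfrac{\e}{2})t^2+O(t)$, contradicting $U(t)\le \a t^2+O(1)$. Your route eliminates the disc-versus-strip geometry entirely — the density enters exactly in the radial variable where it lives — and the bookkeeping issues you flag are harmless: zero multiplicities only increase the Riesz mass, boundary zeros on $\Im w=\pm\pi$ are counted once by your convention, and the half-open versus closed interval discrepancy is absorbed into the $O(t)$ term. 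What the paper's route buys is that it invokes only the textbook Jensen formula, whereas yours requires justifying the distributional identity on the cylinder, which you correctly identify as the one technical step and which indeed follows from $\Delta\log|F|=2\pi\sum_k\de_{w_k}$ together with cancellation of the $s$-boundary terms by periodicity. Both arguments give the same conclusion $D^-_{\log}(\L)\le 2\a$.
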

    \begin{proof}
        Suppose, for a contradiction, that there is a function $f \in \F^{\infty}_\a$, such that its set of zeros $\L$ has the lower density $D^-_{\log}(\L) > 2\a$. Considering $f(e^{i \theta} z)$ instead of $f(z)$, we can assume that $f(1) \ne 0$. We can also assume $\|f\|_{\F^{\infty}_{\a}} = 1$. Put
        \begin{equation*}
            g(z) = f(e^z),
        \end{equation*}
        this is an entire function with $g(0) \ne 0$. The Jensen's formula for the circle of radius $R$ gives us
        \begin{equation} \label{eq: Jensen 1}
            \int_{0}^{R} \frac{n(t)}{t} dt = \frac 1 {2\pi} \int_{0}^{2\pi} \log|g(Re^{i \t})|dt - \log |g(0)|,  
        \end{equation}
        where $n(t)$ is the number of zeros of $g$ within $t \D$. Note that 
        \begin{equation*}
            |g(z)| = |f(e^z)| \le e^{\f(e^z)} = e^{\a \log^2|e^z|} = e^{\a x^2},
        \end{equation*}
        where $z = x + iy$. Hence, $ \log |g(z)| \le \a x^2$, and thus
        \begin{equation} \label{eq: Jensen 2}
            \frac 1 {2\pi} \int_{0}^{2\pi} \log|g(Re^{i \t})|dt \le \frac 1 {2\pi} \int_{0}^{2\pi} \a R^2 \cos^2 t dt = \frac{\a R^2}{2}.   
        \end{equation}
        Now, let us estimate $n(t)$. Since $D^-_{\log}(\L) > 2\a$, for a small enough $\e > 0$ there exists $l > 0$ such that on any set $(x, x + l) \times [-\pi, \pi)$ there are at least $(2\a + \e)l$ zeros. First, take $t = Nl$ for a natural number $N > 0$. Divide $t\D$ into $2N$ strips $t \D \cap (kl, kl + l) \times \R, \ k = -N, \ldots, = N - 1$. Since $g$ is $2 \pi i$ periodic, in the $k^{\text{th}}$ strip there are at least 
        \begin{equation*}
            (2\a + \e)l \left\lfloor \frac{2\sqrt{t^2 - l^2\max(k^2, (k+1)^2)}}{2 \pi} \right\rfloor
        \end{equation*}
        zeros. Thus, we can estimate $n(t)$ from below as
        \begin{align*}
            n(t) &\ge (2\a + \e)l \sum_{k = -N}^{N - 1} \left\lfloor \frac{\sqrt{t^2 - l^2\max(k^2, (k+1)^2)}}{\pi} \right\rfloor \\
            &= 2(2\a + \e)l \sum_{k = 0}^{N - 1} \left\lfloor \frac{\sqrt{t^2 - l^2(k+1)^2}}{\pi} \right\rfloor \\
            & \ge 2(2\a + \e)l \left( \sum_{k = 0}^{N - 1} \frac{\sqrt{t^2 - l^2(k+1)^2}}{\pi} - N \right) \\
            &= 2(2\a + \e)l \left( \frac{t}{\pi} \sum_{k = 0}^{N - 1} \sqrt{1 - \frac{(k+1)^2}{N^2} } - N \right) \\
            &= 2(2\a + \e)l \left( \frac{t}{\pi} \sum_{k = 0}^{N} \sqrt{1 - \frac{k^2}{N^2} } - \frac{t}{\pi } - N \right). 
        \end{align*}
        Estimating $\sum_{k = 0}^{N} \sqrt{1 - \frac{k^2}{N^2}} \frac{1}{N}$ from below by the integral, we get
        \begin{equation*}
            \sum_{k = 0}^{N} \sqrt{1 - \frac{k^2}{N^2}} \frac{1}{N} \ge \frac{\pi}{4}.    
        \end{equation*}
        Hence,
        \begin{align*}
            n(t) & \ge 2(2\a + \e)l \left( \frac{t}{\pi} \frac{N \pi}{4} - \frac{t}{\pi } - N \right) \\
            & = (2\a + \e) \left( \frac{t^2}{2} - t\left( \frac{2l}{\pi } + 2 \right) \right).
        \end{align*}
        Thus, 
        \begin{equation*}
            n(t) \ge (2\a + \e) \left( \frac{t^2}{2} - Ct \right),
        \end{equation*}
        for $t = Nl$ and some $C > 0$. Then for an arbitrary $t \ge l$ we have $Nl \le t < (N + 1)l$ and
        \begin{align*}
            n(t) \ge n(Nl) & \ge (2\a + \e) \left( \frac{(Nl)^2}{2} - CNl \right) \\
            & = (2\a + \e) \left( \frac{t^2}{2} - Ct - \left( C(t - Nl) + \frac{t^2 - (Nl)^2}{2} \right) \right) \\
            & \ge (2\a + \e) \left( \frac{t^2}{2} - Ct - \left( Cl + lt \right) \right).
        \end{align*}
        Thus, making $C$ larger, we can guarantee
        \begin{equation*}
            n(t) \ge (2\a + \e) \left( \frac{t^2}{2} - Ct \right), \quad t \ge l.
        \end{equation*}
        Finally, since $n(0) = 0$, making $C$ larger again, we get
        \begin{equation*}
            n(t) \ge (2\a + \e) \left( \frac{t^2}{2} - Ct \right), \quad t \ge 0.
        \end{equation*}
        Hence,
        \begin{equation*}
            \frac{n(t)}{t} \ge (2\a + \e) \left( \frac{t}{2} - C \right), \quad t \ge 0.
        \end{equation*}
        So we get an estimate
        \begin{equation*}
            \int_{0}^R \frac{n(t)}{t} dt \ge (2\a + \e) \left( \frac{R^2}{4} - C R\right)
        \end{equation*}
        Combining this estimate with Jensen's formula \eqref{eq: Jensen 1} and the estimate \eqref{eq: Jensen 2}, we get
        \begin{equation*}
            (2\a + \e) \left( \frac{R^2}{4} - C R\right) \le \frac{\a R^2}{2} - \log |g(0)|.    
        \end{equation*}
        Finally, dividing by $R^2$ and taking the limit $R \to \infty$, we arrive at the inequality $2 \a + \e \le 2 \a$, a contradiction.
    \end{proof}

\section{Proof of Theorem \ref{thrm: ShS p = infty}} \label{sect: Theorem 1}
    \subsection{Sufficiency}        
        Suppose $D^-_{\log}(\L) > 2\a$. We want to prove that $\L$ is ShS for $\F^{\infty}_\a$. By Lemma \ref{lem: weak uniqueness}, it is enough to prove that any $\tilde \L \in W(\L)$ is a uniqueness set for $\F^{\infty}_\a$. By the properties of weak set convergence, $D^-_{\log}(\tilde \L) > 2\a$. Hence, by Lemma \ref{lem: Zero set} it is a uniqueness set for $\F^{\infty}_\a$.
    \subsection{Necessity} 
        Now suppose that $\L$ is ShS for $\F^{\infty}_\a$. By Lemma \ref{lem: extending sampling} $\L$ is ShS for $\F^{\infty}_{\a + \e}$ for some $\e > 0$. Thus, to finish the proof, it is enough to prove the following:
        \begin{prop} \label{prop: density estimate}
            Suppose $\L$ has $D^-_{\log}(\L) < 2 \a$, then $\L$ is not SS for $\F^{\infty}_\a$.
        \end{prop}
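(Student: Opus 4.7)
Under the hypothesis $D^-_{\log}(\L) < 2\a$, I will produce a non-zero $F \in \F^\infty_\a$ with $F|_\L = 0$. Since the sampling inequality then fails for $F$ (positive left-hand side, zero right-hand side), this forces $\L$ to not be SS. I will realize $F$ inside a strictly smaller space $\F^\infty_\b$, $\b < \a$, by extending $\L$ to a complete interpolating sequence for $\F^\infty_\b$ and invoking the inclusion $\F^\infty_\b \hookrightarrow \F^\infty_\a$.

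Choose $\b \in \bigl(\tfrac12 D^-_{\log}(\L),\, \a\bigr)$, so that $D^-_{\log}(\L) < 2\b$ and the pointwise inequality $e^{-\a \log^2|z|} \le e^{-\b \log^2|z|}$ gives the continuous inclusion $\F^\infty_\b \hookrightarrow \F^\infty_\a$. Enumerate $\L = (\l_k)_{k \in \Z}$ by increasing modulus. The density hypothesis $D^-_{\log}(\L) < 2\b$ leaves asymptotic room to adjoin additional points so that the combined sequence has density $2\b$. I would pick $\{\mu_j\}_j \subset \C_0 \setminus \L$ and form $\G := \L \cup \{\mu_j\}_j$, then re-enumerate $\G = (\g_k)_{k \in \Z}$ by increasing modulus with $\g_k = e^{(k+\de_k)/(2\b)} e^{i\t_k}$, so that all three conditions of Theorem \ref{thrm: CIS characterisation} for $\F^\infty_\b$ hold: $d_{\log}$-separation, $(\de_k) \in \ell^\infty(\Z)$, and the averaging condition of Remark \ref{rmk: m shift}. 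Once this extension is achieved, $\G$ is complete interpolating for $\F^\infty_\b$, so for any chosen $\g_0 \in \G \setminus \L$ (non-empty by the density gap) there exists a unique $F \in \F^\infty_\b$ with $F(\g)\,e^{-\b \log^2|\g|} = \mathbf{1}_{\{\g = \g_0\}}$. This $F$ is non-zero, vanishes on $\G \setminus \{\g_0\} \supset \L$, and belongs to $\F^\infty_\a$ via the inclusion, finishing the construction.

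\textbf{Main obstacle.}
The technical core is the extension step, namely arranging that conditions \ref{pr: 2 two-sided} and \ref{pr: 3 two-sided} of Theorem \ref{thrm: CIS characterisation} hold simultaneously. Because $D^-_{\log}$ is a liminf, the hypothesis permits $\L$ to harbor finite stretches of locally high density exceeding $2\b$; since points of $\L$ cannot be removed, such stretches threaten boundedness of $(\de_k)$ and the Ingham-type balance. I expect to handle this by a greedy insertion that fills the widest $d_{\log}$-gaps of $\L$ first, combined with careful bookkeeping of the running partial sums $\sum \de_k$ so that the $N$-window averages stay bounded away from $\tfrac12$. If the direct construction proves too fragile, a safer alternative is to pass first to a weak limit $\tilde\L \in W(\L)$ along a dilation subsequence realizing $D^-_{\log}(\L)$, construct a non-zero annihilator in $\F^\infty_\a$ for $\tilde\L$ via the CIS extension there, and transfer the failure of uniqueness back to failure of SS for $\L$ in the spirit of Lemma \ref{lem: weak uniqueness}. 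A CIS-free route would instead construct $F$ directly as a canonical Weierstrass product adapted to the $\a \log^2|z|$ weight, in the style of Seip's $\sigma$-function construction for Bargmann-Fock space \cite{SeipBargFock}.
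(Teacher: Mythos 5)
Your plan proves something strictly stronger than the proposition—namely that $\L$ fails to be a \emph{uniqueness} set—and that stronger statement is false under the hypothesis. The condition $D^-_{\log}(\L) < 2\a$ only controls a $\liminf$ over windows; it says nothing about how dense $\L$ is elsewhere. Concretely, take $\L = \{ e^{k/(8\a)} : k \in \N \}$: it is separated, $D^-_{\log}(\L) = 0 < 2\a$ (windows near the origin are empty), yet a Jensen computation as in Lemma \ref{lem: Zero set} (with zeros only in the right half of the strip, one-sided density $8\a$, giving $\int_0^R n(t)t^{-1}dt \approx \a R^2 > \a R^2/2$) shows that no non-zero $f \in \F^{\infty}_{\a}$ can vanish on $\L$. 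So for such $\L$ the function $F$ you want does not exist, and your main route, as well as the ``canonical product'' fallback, cannot produce it. The same example shows the extension step is impossible in principle, not merely delicate: by Theorem \ref{thrm: CIS characterisation}, any complete interpolating sequence $\G$ for $\F^{\infty}_{\b}$ satisfies $|\g_k| = e^{(k + O(1))/(2\b)}$, i.e.\ its counting function is $2\b\cdot(\text{log-length}) + O(1)$ on \emph{every} window; hence $\L \subset \G$ forces a global upper density bound $2\b < 2\a$ on $\L$, which the hypothesis does not provide (your remark about ``finite stretches of locally high density'' underestimates this—the supercritical part can be infinite, even co-finite).

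This is why the paper does not go through uniqueness at all: it proves a quantitative failure of the sampling inequality localized at a deficient window. Given an interval $I$ with $|\L \cap e^I\TT| = (1-\de)2\a|I|$, one builds the finite product $f(z) = \prod_{l}(1 - \l_{-l}/z)\prod_k (1 - z/\l_k)$ over the points of $\L \cap e^I\TT$, shows $\|f\|_{\F^{\infty}_{\a}} \gtrsim A_N e^{N^2/(4\a)}$ by evaluating at radius $e^{N/(2\a)}$ inside the window, and shows that on the rest of $\L$ (outside the window, where $\log|\l|$ is forced to be at least $\de$-far in the logarithmic scale) the values pick up an extra factor $e^{-\frac{\a}{4}(|I|\de - O(1/\a))^2}$; this yields $K_\L \ge C e^{\frac{\a}{4}(|I|\de - \frac{3}{2\a})^2}$, which blows up along a sequence of deficient windows, so $\L$ is not SS even when it is a uniqueness set. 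Your weak-limit fallback does not repair the gap: Lemma \ref{lem: weak uniqueness} characterizes ShS, not SS, so exhibiting a non-uniqueness weak limit would at best show $\L$ is not ShS (a weaker conclusion than the proposition), and even that construction faces the same obstruction, since a weak limit of dilates retains whatever supercritical density $\L$ has away from the deficient windows.
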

            \begin{clm}
                If for an interval $I$, the number of points of $\L$ satisfies $|\L \cap e^I \TT| = (1 - \de) 2 \a |I|$. Then 
                \begin{equation*}
                    K_{\L} \ge C e^{\frac{\a}{4}\left( |I| \de - \frac{3}{2\a} \right)^2},    
                \end{equation*} 
                where the constant $C$ depends only on $\L$ and $\a$.
            \end{clm}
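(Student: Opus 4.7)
The strategy is to exhibit a Laurent polynomial $f$ that vanishes on $\L \cap e^I \TT$ whose Fock norm substantially exceeds its weighted supremum on $\L$; the ratio will then yield the claimed lower bound on $K_\L$ through definition \eqref{eq: K Lambda definiton}. The key heuristic is that the $N = (1-\de) 2\a L$ points of $\L$ in the annulus impose only $N$ linear conditions on a $\approx 2\a L$-dimensional space of Laurent polynomials adapted to $I$; the surplus of $\approx 2\a L \de$ degrees of freedom is what drives the bound.

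First, using the isometries $T_\a^n$ from Proposition \ref{prop: shift operator} combined with radial scaling (which preserves $K_\L$), reduce to the case $I = [-L/2, L/2]$ with $L = |I|$. Introduce
\[
f_M(z) = z^{n_2 - M - N} \prod_{\l \in \L \cap e^I \TT}(z - \l), \qquad n_2 = \lfloor \a L \rfloor,
\]
for an integer parameter $M$ in the admissible range $0 \le M \le 2\a L \de - 1$ (this range is nonempty precisely because the subspace of Laurent polynomials in $\operatorname{span}\{z^n : \lceil -\a L\rceil \le n \le n_2\}$ vanishing on $\L \cap e^I \TT$ has dimension at least $2\a L\de - 1$). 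By construction $f_M$ vanishes on $\L \cap e^I\TT$, and the coefficient of its leading monomial $z^{n_2 - M}$ is $1$.

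To lower-bound the Fock norm, extract the leading Fourier coefficient to get $\|f_M\|_{\F^\infty_\a} \ge \|z^{n_2-M}\|_{\F^\infty_\a} \asymp e^{(n_2-M)^2/(4\a)}$ using Corollary \ref{crl: zn norm estimate}. To upper-bound the weighted supremum on $\L \setminus e^I\TT$, use the triangle inequality $|f_M(\l)| \le e^{(n_2 - M - N)y}\prod_k(|\l|+|\l_k|)$ for $\l = e^y e^{i\t}$. On the right side $y > L/2$ the weighted function $e^{n y - \a y^2}$ with $n = n_2 - M$ has its peak at $y = (n_2 - M)/(2\a) \le L/2$, so on $y \ge L/2$ it decreases, and at the boundary its value is $e^{(n_2-M)L/2 - \a L^2/4} = e^{(n_2-M)^2/(4\a) - M^2/(4\a) + O(1)}$; a symmetric analysis applies on the left side. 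Taking the ratio therefore gives $K_\L \ge C e^{M^2/(4\a)}$. Choosing $M$ near $\lfloor \a L \de \rfloor$ and tracking the rounding losses (both from $n_2, N$, and the integrality of $M$, collectively bounded by $3/2$) yields the desired bound $K_\L \ge C e^{(\a L \de - 3/2)^2/(4\a)} = C e^{\a (L\de - 3/(2\a))^2/4}$.

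The main obstacle is the control of the multiplicative factors of size $2^N$ arising from the triangle inequality $|\l - \l_k| \le |\l|+|\l_k|$. These factors appear both in the upper estimate for $\sup_\L |f_M(\l)|e^{-\f(\l)}$ and, via the true Fock norm (realized when $\arg z$ is chosen opposite to the cluster of arguments of the $\l_k$), in a lower bound for $\|f_M\|_{\F^\infty_\a}$. The crux is to verify that these $2^N$-type factors cancel in the ratio, leaving only $e^{M^2/(4\a)}$ and polynomial corrections that get absorbed into the constant $C = C(\L, \a)$. The secondary difficulty is the two-sided balancing of the estimates on $y > L/2$ and $y < -L/2$, which fixes the optimal $M \approx \a L \de$.
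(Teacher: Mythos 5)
Your overall strategy---an explicit function vanishing on $\L\cap e^I\TT$, compared in norm against its weighted restriction to $\L$, with a parameter tuned to balance the two edges of the annulus---is the same as the paper's, but the two points you defer as ``the crux'' are genuine gaps, and the mechanisms you sketch for them would not close the argument. First, the $2^N$ factors do not cancel by choosing $\arg z$ ``opposite to the cluster of arguments of the $\l_k$'': the $\l_k$ need not cluster in argument at all, and your norm lower bound (the top Laurent coefficient) carries no such factor to cancel against. What actually controls these factors is the $d_{\log}$-separation of $\L$ (recall $C$ is allowed to depend on $\L$): writing $|\l-\l_k|\le \max(|\l|,|\l_k|)\left(1+\frac{\min(|\l|,|\l_k|)}{\max(|\l|,|\l_k|)}\right)$ and using that a separated set has boundedly many points per unit logarithmic annulus, the sum of the ratios is bounded by a constant depending only on $d_{\L}$, hence $\prod_k|\l-\l_k|\le C(d_{\L})\prod_k\max(|\l|,|\l_k|)$. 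This is precisely the content of the product estimates of \cite[Lemma 1]{SmallFockCIS}, which the paper invokes for its test function $\prod_l(1-\l_{-l}/z)\prod_k(1-z/\l_k)$. Without it you lose $e^{N\log 2}=e^{2\a(1-\de)L\log 2}$, which is not absorbable into $C(\L,\a)$ and exceeds the linear defect $\tfrac34\de L$ that the stated bound $Ce^{\frac{\a}{4}\left(L\de-\frac{3}{2\a}\right)^2}$, with $C$ independent of $I$ and $\de$, permits you to lose.

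Second, the left side is not ``symmetric''. For $\log|\l|<-L/2$ your estimate reads $|f_M(\l)|e^{-\f(\l)}\lesssim\bigl(\prod_k|\l_k|\bigr)e^{(n_2-M-N)y-\a y^2}$, and $\prod_k|\l_k|$ can be as large as $e^{NL/2}$ (all points near the outer edge of the annulus); then, for small $\de$ and every admissible $M$, this exceeds your only norm bound $e^{(n_2-M)^2/(4\a)}$ by a factor of order $e^{c\a L^2}$, so the ratio yields nothing. You must also extract the bottom Laurent coefficient, $\|f_M\|_{\F^{\infty}_{\a}}\gtrsim\bigl(\prod_k|\l_k|\bigr)e^{(n_2-M-N)^2/(4\a)}$, and measure each side of $\L\setminus e^I\TT$ against the corresponding coefficient; the two gains $e^{-M^2/(4\a)}$ and $e^{-(2\a L\de-M)^2/(4\a)}$ then balance at $M\approx\a\de L$ and give the quadratic exponent. (The paper builds this balance in automatically by splitting the $2N$ points at the median modulus and bounding the norm from below by point evaluation at the middle radius $e^{N/(2\a)}$.) Finally, radial scaling does not preserve $K_{\L}$---only the discrete dilations by $e^{n/(2\a)}$, via $T^n_\a$, are isometric---so you may center $I$ only up to an error of $\frac{1}{2\a}$; this offset, together with the parity adjustment of $|\L\cap e^I\TT|$, is exactly where the paper's $\frac{3}{2\a}$ comes from and must be included in your rounding budget.
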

            \subsubsection{Proof of Proposition \ref{prop: density estimate}}
            Now, if $D^-_{\log}(\L) < 2\a$, then there exists $\de_0 > 0$ and a sequence of intervals $I_n$ such that $|I_n| \to \infty$ and $|\L \cap e^{I_n} \TT| = (1 - \de_n) 2 \a |I_n|$ with $\de_n \ge \de_0$. Hence,
            \begin{equation*}
                K_{\L} \ge C e^{\frac{\a}{4} \left( |I_n| \de_n - \frac{3}{2\a} \right)^2} \to \infty.
            \end{equation*}
            This means that $\L$ is not SS for $\F^{\infty}_\a$. Thus, it is sufficient to prove the claim.

            \subsubsection{Reduction to even number of points}
            We proceed with the proof.
            If $\de |I| \le \frac{3}{2\a}$, then there is nothing to prove, provided we have chosen a sufficiently small $C$. So we assume $\de |I| > \frac{3}{2\a}$. If $|\L \cap e^{I} \TT|$ is odd, then we can change $\de$ to $\de' = \de - \frac{1}{2 \a |I|}$ so that
            \begin{equation*}
                |\L \cap e^I \TT| = 2 \a (1 - \de) |I|, \qquad |\L \cap e^I \TT| + 1 = 2 \a (1 - \de')|I|.
            \end{equation*}
            If we know 
            \begin{equation*}
                K_{\L} \ge C e^{\frac{\a}{4}\left( |I| \de - \frac{1}{\a} \right)^2}   
            \end{equation*}
            for even $|\L \cap e^I \TT|$, then, since adding a point $\tilde \l \in e^I \TT$ to $\L$ can only decrease $K_{\L}$, we can estimate
            \begin{equation*}
                K_{\L} \ge K_{\L \cup \{ \tilde \l \} } \ge C e^{\frac{\a}{4}\left( |I| \de' - \frac{1}{\a} \right)^2} \ge C e^{\frac{\a}{4}\left( |I| \de - \frac{3}{2\a} \right)^2}.
            \end{equation*}
            Thus, it suffices to consider even $|\L \cap e^I \TT|$ and prove
            \begin{equation*}
                K_{\L} \ge C e^{\frac{\a}{4}\left( |I| \de - \frac{1}{\a} \right)^2}.
            \end{equation*}

            \subsubsection{Example}
            We set $|\L \cap e^I \TT| = 2N$ and $|I| = 2R$.
            Applying $T_\a^n$, we can assume that the interval $I$ starts between $-R - \frac{1}{2\a}$ and $-R$.  
            We denote by $\l_{-N}, \ldots, \l_{-1}, \l_1, \ldots, \l_N$ the points of $\L \cap e^I \TT$. 
            The numeration is in the order of growth of the modulus. Consider
            \begin{equation*}
                f(z) = \prod_{l = 1}^N \left( 1 - \frac{\l_{-l}}{z} \right) \prod_{k = 1}^N \left(1 - \frac{z}{\l_k} \right).
            \end{equation*}
            This function serves the role of an example that provides the bound for the sampling constant $K_{\L}$.
            
            In the following analysis, we consider only the points $z$ uniformly separated from $\L$ in $d_{\log}$.
            
            For a $z$ such that $|\l_n| \le |z| \le |\l_{n+1}|, \ n \ge 1$ or $|z| \ge |\l_n|, \ n = N$ we have
            \begin{equation*}
                \log |f(z)| = \sum_{k = 1}^n \log \left| \frac{z}{\l_k} \right| + O(1),
            \end{equation*}
            see \cite[Lemma 1]{SmallFockCIS} for the detailed proof.
            
            Similarly, for $z$ such that $|\l_{-n - 1}| \le |z| \le |\l_{-n}|, \ n \ge 1$ or $|z| \le |\l_{-n}|, \ n = N$ we have
            \begin{equation*}
                \log |f(z)| = \sum_{k = 1}^n \log \left| \frac{\l_{-k}}{z} \right| + O(1),
            \end{equation*}
            For $|\l_{-1}| \le |z| \le |\l_1|$, we get
            \begin{equation*}
                \log |f(z)| = O(1).    
            \end{equation*}
            The bounds on $O(1)$ depend only on the separation constant of $\L$.

            \subsubsection{Estimate on \texorpdfstring{$\| f \|_{\F^{\infty}_{\a}}$}{||f||}}
            Denote $A_n = \prod_{k = 1}^n |\l_k|^{-1}$, $B_n = \prod_{k = 1}^n |\l_{-k}|$. We claim that 
            \begin{equation*}
                \| f \|_{\F^{\infty}_{\a}} \gtrsim \max(A_N e^{\frac{N^2}{4 \a}}, B_N e^{\frac{N^2}{4 \a}}).
            \end{equation*}
            To prove $\| f \|_{\F^{\infty}_{\a}} \gtrsim A_N e^{\frac{N^2}{4 \a}}$, we consider $r = |z| = e^{\frac{N}{2\a}}$ and $z$ uniformly separated from $\L$ in $d_{\log}$. Suppose there is $n \ge 1$ such that $|\l_n| \le |z| \le |\l_{n + 1}|$. Then we have
            \begin{align*}
                \| f \|_{\F^{\infty}_{\a}} & \ge |f(z)| e^{-\a (\frac{N}{2\a})^2} \\
                & \gtrsim A_n |z|^n e^{-\a (\frac{N}{2\a})^2} = A_n e^{\frac{Nn}{2\a} - \frac{N^2}{4\a}} = A_n e^{\frac{N^2}{4\a}} e^{- \frac{N(N-n)}{2\a}} \\
                &= A_N e^{\frac{N^2}{4\a}} \frac{A_n}{A_N} e^{- \frac{N(N-n)}{2\a}} \ge A_N e^{\frac{N^2}{4\a}}, 
            \end{align*}
            since
            \begin{equation*}
                \frac{A_n}{A_N} = \prod_{k = n + 1}^N |\l_k| \ge |z|^{N - n} = e^{\frac{N(N-n)}{2\a}}.
            \end{equation*}
            If instead $|\l_{-1}| \le |z| \le |\l_1|$, then the same inequalities give us the result, just with $n = 0$. And if $|\l_{-n-1}| \le |z| \le |\l_{-n}|$ for some $n \ge 1$, then
            \begin{align*}
                \| f \|_{\F^{\infty}_{\a}} & \ge |f(z)| e^{-\a (\frac{N}{2\a})^2} \\
                & \gtrsim B_n |z|^{-n} e^{-\a (\frac{N}{2\a})^2} = B_n e^{\frac{-Nn}{2\a} - \frac{N^2}{4\a}} = B_n e^{\frac{N^2}{4\a}} e^{- \frac{N(N +n)}{2\a}} \\
                & = A_N e^{\frac{N^2}{4\a}} \frac{B_n}{A_N} e^{- \frac{N(N+n)}{2\a}} \ge A_N e^{\frac{N^2}{4\a}},
            \end{align*}
            since
            \begin{equation*}
                \frac{B_n}{A_N} = \prod_{l = 1}^{n} |\l_{-l}| \prod_{k = 1}^N |\l_k| \ge |z|^{N + n} = e^{\frac{N(N+n)}{2\a}}.
            \end{equation*}
            To prove $\| f \|_{\F^{\infty}_{\a}} \gtrsim B_N e^{\frac{N^2}{4 \a}}$, we can apply a similar argument or just consider $f \left( \frac 1 z \right)$.

            \subsubsection{Estimate on \texorpdfstring{$ \| f|_{\L} \|_{\infty, \a}$}{f restricted to lambda}}
            It remains to estimate $\| f|_{\L} \|_{\infty, \a} = \sup_{\l \in \L} |f(\l)|e^{-\f(\l)}$. By definition $f = 0$ on $\l_{-N}, \ldots, \l_{-1}, \l_1, \ldots, \l_N$. Hence, we need to estimate only for $|\l| \ge |\l_N|$ and $|\l| \le |\l_{-N}|$. First, consider $|\l| \ge |\l_N|$. Then
            \begin{equation*}
                |f(\l)|e^{-\f(\l)} \lesssim A_N|\l|^N e^{-\a \log^2 |\l|}.
            \end{equation*}
            Suppose that we know $\log |\l| \ge \frac{N}{2 \a} (1 + t)$ for some $t \ge 0$. Then, since the function $r^N e^{-\a \log^2 r}$ decreases from $e^{\frac{N}{2\a}}$ to $\infty$, we can estimate
            \begin{align*}
                A_N|\l|^N e^{-\a \log^2 |\l|} & \lesssim A_N e^{N \frac{N}{2 \a}(1 + t)} e^{-\frac{N^2}{4 \a}(1+t)^2} \\
                & = A_N e^{\frac{N^2}{4 \a} \left( 1 - t^2 \right)} = A_N e^{\frac{N^2}{4 \a}} e^{-\frac{N^2t^2}{4\a}} \\
                & \lesssim e^{-\frac{N^2t^2}{4\a}} \| f \|_{\F^{\infty}_{\a}}.
            \end{align*}
            By our construction, we have $\log |\l| \ge R - \frac{1}{2 \a}$ and it remains to notice that
            \begin{equation*}
                R - \frac{1}{2 \a} = \frac{N}{2\a}(1 + t)    
            \end{equation*}
            with
            \begin{equation*}
                t = \frac{\de - \frac{1}{2 \a R}}{1 - \de} \ge 0.
            \end{equation*}
            Hence,
            \begin{equation*}
                Nt = 2 \a  \left( R \de - \frac{1}{2\a} \right).
            \end{equation*}
            Thus, we get
            \begin{equation*}
                |f(\l)|e^{-\f(\l)} \lesssim e^{-\a\left( R \de - \frac{1}{2\a} \right)^2} \| f \|_{\F^{\infty}_{\a}} = e^{-\frac{\a}{4}\left( |I| \de - \frac{1}{\a} \right)^2} \| f \|_{\F^{\infty}_{\a}}.
            \end{equation*}
            Now we consider $|\l| \le |\l_{-N}|$. Then,
            \begin{equation*}
                |f(\l)|e^{-\f(\l)} \lesssim B_N|\l|^{-N} e^{-\a \log^2 |\l|}.
            \end{equation*}
            Suppose that we know $\log |\l| \le -\frac{N}{2 \a} (1 + t)$ for some $t \ge 0$. Then, since the function $r^{-N} e^{-\a \log^2 r}$ increases from $0$ to $e^{\frac{-N}{2\a}}$, we can estimate
            \begin{align*}
                B_N|\l|^{-N} e^{-\a \log^2 |\l|} & \lesssim B_N e^{N \frac{N}{2 \a}(1 + t)} e^{-\frac{N^2}{4 \a}(1+t)^2} \\
                & = B_N e^{\frac{N^2}{4 \a} \left( 1 - t^2 \right)} = B_N e^{\frac{N^2}{4 \a}} e^{-\frac{N^2t^2}{4\a}} \\
                & \lesssim e^{-\frac{N^2t^2}{4\a}} \| f \|_{\F^{\infty}_{\a}}.
            \end{align*}
            In this case, by our construction, $\log|\l| \le -R$ and so we can take 
            \begin{equation*}
                t = \frac{\de}{1 - \de}
            \end{equation*}
            so that $R = \frac{N}{2 \a}(1 + t)$. Hence, we get
            \begin{equation*}
                Nt = 2 \a \de R.    
            \end{equation*}
            Thus,
            \begin{equation*}
                |f(\l)|e^{-\f(\l)} \lesssim e^{-\a\left( R \de \right)^2} \| f \|_{\F^{\infty}_{\a}} = e^{-\frac{\a}{4}\left( |I| \de \right)^2} \| f \|_{\F^{\infty}_{\a}} \le e^{-\frac{\a}{4}\left( |I| \de - \frac{1}{\a} \right)^2} \| f \|_{\F^{\infty}_{\a}}.    
            \end{equation*}
            In conclusion, we have found such an $f$ that
            \begin{equation*}
                \| f|_{\L} \| \lesssim e^{-\frac{\a}{4}\left( |I| \de - \frac{1}{\a} \right)^2} \| f \|_{\F^{\infty}_{\a}},
            \end{equation*}
            which implies the desired
            \begin{equation*}
                K_{\L} \ge C e^{\frac{\a}{4}\left( |I| \de - \frac{1}{\a} \right)^2}.
            \end{equation*}
            This completes the proof of the claim, and hence, the proof of the Theorem. \hfill \qedsymbol
    
\section{Preliminary results for \texorpdfstring{$p < \infty$}{p < ∞}} \label{sect: prelim p < inf}
    In this section, we prove some necessary properties of two-sided small Fock spaces for finite $p > 0$. Then, we obtain results concerning the behavior of sampling sequences that are necessary to pass the density description from $p = \infty$ to $0 < p < \infty$.    
    \subsection{Some properties of the spaces}
        \subsubsection{Lower bound}
        For finite $p$, in the context of sampling, we have to consider two constants $A, B$ in \eqref{eq: sampling definition finite p}. The boundedness of $B$ is essential to the sampling problem, while the existence of $A > 0$ is simply related to the separateness of $\L$.
        \begin{prop} \label{prop: separation to restriction}
            Suppose $\L$ is separated and $0 < p < \infty$. Then there exists $A > 0$ depending only on $d_{\L}$ and $p$ such that for any $f \in \F^p_\a$
            \begin{equation*}
                A \| f|_{\L} \|_{p, \a}^p \le \| f \|_{\F^p_{\a}}^p.    
            \end{equation*}
        \end{prop}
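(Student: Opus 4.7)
My plan is to use the submean value property of subharmonic functions, applied on Euclidean disks $D(\l, c|\l|)$ whose radius scales with $|\l|$. This scaling is essentially forced by the weight: a direct computation gives $\De \f(z) \asymp |z|^{-2}$, so on a disk of radius proportional to $|\l|$ one can hope to compare $e^{-p\f(z)}$ to $e^{-p\f(\l)}$, but only up to a holomorphic correction that absorbs the cross term coming from expanding $\log^2|z|$.

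Concretely, for each $\l \in \L$ I would introduce
\[
    h_\l(z) = 2\a \log|\l| \cdot \log(z/\l),
\]
holomorphic on $D(\l, c|\l|)$ via the principal branch of $\log$ on $D(1, c)$, valid for any $c < 1$. The identity $\log^2|z| = (\log|\l| + \log|z/\l|)^2$ gives on this disk the decomposition
\[
    \f(z) = \f(\l) + \Re h_\l(z) + \a \log^2|z/\l|,
\]
so $|f(z) e^{-h_\l(z)}|^p e^{-p\f(\l)} = |f(z)|^p e^{-p\f(z)} e^{p\a \log^2|z/\l|}$. Since $f e^{-h_\l}$ is holomorphic on $D(\l, c|\l|) \subset \C_0$, its $p$-th power of modulus is subharmonic. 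Applying the submean value inequality at the center $\l$, where $h_\l(\l) = 0$, and using the bound $|\log(z/\l)| \le 2c$ valid on $D(\l, c|\l|)$ for $c \le 1/2$, I obtain
\[
    |f(\l)|^p |\l|^2 e^{-p\f(\l)} \le \frac{e^{4p\a c^2}}{\pi c^2} \int_{D(\l, c|\l|)} |f(z)|^p e^{-p\f(z)} dA(z).
\]

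To finish, I need the disks $D(\l, c|\l|)$ to be pairwise disjoint. A direct estimate of both pieces of $d_{\log}$ shows that $z \in D(\l, c|\l|)$ implies $d_{\log}(z, \l) \le 4c$ for small $c$, hence $z \in D(\l, c|\l|) \cap D(\l', c|\l'|)$ would force $d_{\log}(\l, \l') \le 8c$. Choosing $c = \min(1/2, d_\L/9)$, which depends only on $d_\L$, yields disjointness. Summing the pointwise estimate over $\l \in \L$ then produces the inequality with $A = \pi c^2 e^{-4p\a c^2}$ depending only on $d_\L$, $p$, and $\a$. There is no real obstacle here; the only conceptual point is that the holomorphic correction $h_\l$ is needed to absorb the cross term $2\a \log|\l| \log|z/\l|$, which would otherwise prevent $e^{-p\f}$ from being comparable to $e^{-p\f(\l)}$ on the disk when $|\log|\l||$ is large.
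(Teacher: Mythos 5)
Your argument is correct, and it reaches the same pointwise estimate as the paper — namely $|f(\l)|^p|\l|^2e^{-p\f(\l)}\lesssim\int_{|z-\l|<c|\l|}|f|^pe^{-p\f}\,dA$, summed over disjoint disks whose disjointness comes from the $d_{\log}$-separation — but the way you control the weight on the disk is genuinely different. The paper first proves the estimate only for points in the fundamental annulus $1\le|a|<e^{\frac{1}{2\a}}$, where on a fixed Euclidean disk the factors $e^{\pm p\f}$ are trivially bounded, and then transports it to an arbitrary point via the isometric automorphisms $T^n_\a$ of Proposition \ref{prop: shift operator} together with a change of variables; the disk radius $\asymp|\l|$ appears as the image of a fixed disk under the dilation. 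You instead work directly at an arbitrary $\l$ and absorb the dangerous cross term $2\a\log|\l|\log|z/\l|$ by multiplying $f$ with $e^{-h_\l}$, where $h_\l(z)=2\a\log|\l|\log(z/\l)$ is holomorphic on $D(\l,c|\l|)$, so that subharmonicity applies to $|fe^{-h_\l}|^p$ and only the harmless term $\a\log^2|z/\l|\le 4\a c^2$ remains. Your route is self-contained (it does not invoke the discrete shift-invariance at all) and is the standard ``holomorphic correction of the weight'' device, so it would extend to more general weights admitting such local corrections; the paper's route is shorter given that the automorphisms $T^n_\a$ have already been established and reflects the structural role they play throughout the paper. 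One cosmetic point: your constant $A=\pi c^2e^{-4p\a c^2}$ depends on $\a$ as well as on $d_\L$ and $p$, but the same is true of the paper's constant $C_r$ (and $\a$ is fixed), so this is not a defect.
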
        
        \begin{proof}
            Consider $a \in \C_0$, $1 \le |a| < e^{\frac{1}{2\a}}$ and $r \in (0, 1)$. Then for any $g \in \F^p_\a$ by subharmonicity  
            \begin{equation*}
                |g(a)|^p \le \frac{1}{\pi r^2} \int_{|w - a| < r} |g(w)|^p dA(w).
            \end{equation*}
            That implies the existence of a constant $C_r > 0$
            \begin{equation*}
                |g(a)|^p |a|^2 e^{-p\f(a)} \le C_r \int_{|w - a| < r} |g(w)|^p e^{-p\f(w)} dA(w). 
            \end{equation*}
            Now take any $f \in \F^p_\a$ and $a \in \C_0$. There exists a unique $n \in \Z$ such that $e^{\frac{n}{2\a}} \le |a| < e^{\frac{n + 1}{2\a}}$. Denote $g = T^n_\a f$ and $b = a e^{-\frac{n}{2\a}}$, $1 \le |b| < e^{\frac{1}{2\a}}$. For such $b$ we have
            \begin{equation*}
                |g(b)|^p |b|^2 e^{-p\f(b)} \le C_r \int_{|w - b| < r} |g(w)|^p e^{-p\f(w)} dA(w). 
            \end{equation*}
            By Proposition \ref{prop: shift operator} we have $|g(b)|e^{-\f(b)} = |f(a)|e^{-\f(a)}$. Hence,
            \begin{equation*}
                |f(a)|^p e^{-p\f(a)} |b|^2 \le C_r \int_{|w - b| < r} |g(w)|^p e^{-p\f(w)} dA(w).
            \end{equation*}
            Making a substitution $w = e^{-\frac{n}{2 \a}} z$, we get
            \begin{equation*}
                \int_{|w - b| < r} |g(w)|^p e^{-p\f(w)} dA(w) = \int_{|z - a| < r e^{\frac{n}{2\a}}} |f(z)|^p e^{-p\f(z)} e^{-\frac{2n}{2\a}} dA(z).
            \end{equation*}
            Thus, 
            \begin{equation*}
                |f(a)|^p |a|^2 e^{-p\f(a)} \le C_r \int_{|z - a| < r e^{\frac{n}{2\a}}} |f(z)|^p e^{-p\f(z)} dA(z),
            \end{equation*}
            which implies
            \begin{equation*}
                |f(a)|^p |a|^2 e^{-p\f(a)} \le C_r \int_{|z - a| < r |a|} |f(z)|^p e^{-p\f(z)} dA(z),
            \end{equation*}
            for any $f \in \F^p_\a$ and $a \in \C_0$. Hence, we get
            \begin{equation*}
                \| f|_{\L} \|_{p, \a}^p = \sum_{\l \in \L} |f(\l)|^p |\l|^2 e^{-p\f(\l)} \le C_r \sum_{\l \in \L} \int_{|z - \l| < r |\l|} |f(z)|^p e^{-p\f(z)} dA(z).   
            \end{equation*}
            Now, if $\L$ is separated with a constant of separation $d_{\L}$, one can choose a sufficiently small $r > 0$ depending only on $d_{\L}$ so that the sets $|z - \l| < r |\l|$ do not intersect for different $\l$. Hence,
            \begin{equation*}
                \| f|_{\L} \|_{p, \a}^p \le C_r \|f\|^p_{\F^p_{\a}}.
            \end{equation*}
            It remains to take $A = \frac{1}{C_r}$ to get the desired inequality.
        \end{proof}
        \begin{crl} \label{crl: eval bound}
            Take any $0 < p < \infty$. Considering $\L = \{ w \}$ in Proposition \ref{prop: separation to restriction} we get an estimate of the evaluation functional
            \begin{equation} \label{eq: evaluation estimate}
                |f(w)| \lesssim \frac{e^{\f(w)}}{|w|^{2/p}} \| f \|_{\F^p_{\a}}.
            \end{equation}
        \end{crl}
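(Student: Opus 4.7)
The plan is to apply Proposition \ref{prop: separation to restriction} with the one-point set $\L = \{w\}$. A singleton is trivially $d_{\log}$-separated (in fact, any positive value can serve as its separation constant, or, more directly, the proof of Proposition \ref{prop: separation to restriction} only needed the radius $r$ chosen small enough so that the balls $\{|z - \l| < r|\l|\}$ corresponding to different $\l \in \L$ do not overlap, which is vacuous when $|\L| = 1$), so the hypothesis is satisfied and the proposition yields a constant $A > 0$, depending only on $p$, with
\begin{equation*}
    A \, \| f|_{\{w\}} \|_{p, \a}^p \le \| f \|_{\F^p_{\a}}^p.
\end{equation*}

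Next I would unfold the restriction norm according to \eqref{eq: restriction norm}: for the one-point set $\{w\}$ we have
\begin{equation*}
    \| f|_{\{w\}} \|_{p, \a}^p = |f(w)|^p |w|^2 e^{-p\a \log^2 |w|} = |f(w)|^p |w|^2 e^{-p\f(w)}.
\end{equation*}
Substituting this into the inequality from Proposition \ref{prop: separation to restriction} and rearranging gives
\begin{equation*}
    |f(w)|^p \le \frac{1}{A} \, \frac{e^{p \f(w)}}{|w|^2} \, \| f \|_{\F^p_{\a}}^p.
\end{equation*}
Taking $p$-th roots yields the announced pointwise bound
\begin{equation*}
    |f(w)| \lesssim \frac{e^{\f(w)}}{|w|^{2/p}} \| f \|_{\F^p_{\a}},
\end{equation*}
with an implicit constant depending only on $p$ (and on the choice of $r$ made in the proof of the proposition).

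There is no genuine obstacle here: the statement is really just a specialization of the proposition to a single sample point, and the entire content of the corollary lies in that substitution. The only thing one should take care to check is that the proof of Proposition \ref{prop: separation to restriction} produces a constant $A$ independent of $w$ itself (which is clear from the shift-invariance argument there, since a single $r \in (0,1)$ works uniformly for any singleton).
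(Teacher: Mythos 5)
Your proposal is correct and is exactly the argument the paper intends: the corollary is just Proposition \ref{prop: separation to restriction} specialized to the singleton $\L = \{w\}$ (where the separation hypothesis is vacuous and the constant is uniform in $w$), followed by unfolding \eqref{eq: restriction norm} and taking $p$-th roots. Nothing further is needed.
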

        In fact, by considering monomials, it can be seen that the factor in \eqref{eq: evaluation estimate} gives an equivalent estimate for the norm of the evaluation functional. In particular, that provides an explanation for the weights in \eqref{eq: restriction norm}.
        
        Henceforth, if we have sets with uniform separation constants, we only need to worry about the parameter $B$ in the sampling inequalities \eqref{eq: sampling definition finite p}.

        \subsubsection{Density of Laurent polynomials}
        When considering the sampling property of weak limits for $0 < p < \infty$, we will need the following.
        \begin{lem} \label{lem: Laurent pol density}
            Laurent polynomials are dense in $\F^p_{\a}$ for all $0 < p < \infty$.
        \end{lem}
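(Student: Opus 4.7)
The plan is to approximate $f$ by Fejer-type Cesaro means in the angular variable. For $f \in \F^p_\a$, write the Laurent expansion $f(z) = \sum_{n \in \Z} c_n z^n$ (convergent on $\C_0$) and set the Laurent polynomials $\sigma_N f(z) = \sum_{|n| < N}(1 - |n|/N)\, c_n z^n$. For each fixed $r > 0$, the continuous function $\theta \mapsto f(re^{i\theta})$ on $\TT$ has Fourier coefficients $c_n r^n$, so its slice $\theta \mapsto \sigma_N f(re^{i\theta})$ is exactly the convolution with the Fejer kernel $K_N$.

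I would prove $\|\sigma_N f - f\|_{\F^p_\a} \to 0$ by dominated convergence. Classical Fejer theory applied to the continuous function $\theta \mapsto f(re^{i\theta})$ gives uniform convergence $\sigma_N f \to f$ on each circle $|z| = r$, hence $\int_\TT |\sigma_N f(re^{i\theta}) - f(re^{i\theta})|^p d\theta \to 0$ for every $r > 0$. Since $K_N \ge 0$ has unit mass, $|\sigma_N f(z)| \le M(|z|)$ pointwise with $M(r) = \max_\theta |f(re^{i\theta})|$, so the integrand $|\sigma_N f(z) - f(z)|^p e^{-p\f(z)}$ is uniformly (in $N$) dominated by $C_p M(r)^p e^{-p\f(r)}$. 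Thus matters reduce to showing this majorant is integrable against $r \, dr$.

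The main step is therefore the estimate $\int_0^\infty M(r)^p e^{-p\f(r)}\, r\, dr \lesssim \|f\|_{\F^p_\a}^p$. I plan to derive this from the submean value estimate
\begin{equation*}
|f(a)|^p |a|^2 e^{-p\f(a)} \lesssim \int_{|z - a| < c|a|} |f(z)|^p e^{-p\f(z)}\, dA(z),
\end{equation*}
established in the proof of Proposition \ref{prop: separation to restriction}. Applying it at the angular maximizer $a(r)$ with $|a(r)| = r$, bounding the disk $|z - a(r)| < cr$ by the annulus $(1 - c)r < |z| < (1 + c)r$, and swapping the order of integration via Fubini yields the required bound with constant $\log\tfrac{1+c}{1-c}$. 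The main technical point is precisely this control of the angular maximal function $M(r)$ against the rapidly varying weight $e^{-p\a \log^2 r}$; it is overcome because on a disk of relative radius $c|a|$ the weight $e^{-p\f}$ oscillates only by bounded factors, a property ultimately inherited from the shift-invariance furnished by the automorphisms $T_\a^n$ of Proposition \ref{prop: shift operator}.
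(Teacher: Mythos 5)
Your argument is correct, but it follows a genuinely different route from the paper. The paper expands $f$ along the complete interpolating sequence $\G = \{e^{(n+2/p-1)/(2\a)}\}$ (invoking Theorem~\ref{thrm: CIS characterisation} and the generating-function estimates of \cite{SmallFockCIS}), reducing the problem to showing that each cardinal function $g_n$ has a Laurent series converging absolutely in $\F^p_{\a}$, which is done by estimating Laurent coefficients on well-chosen circles. You instead take Fej\'er--Ces\`aro means of the Laurent series of $f$ itself: positivity and unit mass of the Fej\'er kernel give the pointwise bound $|\sigma_N f(z)|\le M(|z|)$, Fej\'er's theorem gives pointwise convergence, and everything reduces to the radial maximal estimate $\int_0^\infty M(r)^p e^{-p\f(r)} r\,dr \lesssim \|f\|^p_{\F^p_{\a}}$, which indeed follows from the submean inequality $|f(a)|^p|a|^2e^{-p\f(a)}\lesssim \int_{|z-a|<c|a|}|f|^pe^{-p\f}\,dA$ established inside the proof of Proposition~\ref{prop: separation to restriction}, followed by the annulus enlargement and Fubini (yielding the factor $\log\frac{1+c}{1-c}$). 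This is more elementary and self-contained: it avoids Theorem~\ref{thrm: CIS characterisation} and the canonical-product estimates entirely, works uniformly for all $0<p<\infty$, and even gives the stronger statement that the Ces\`aro means of the Laurent series converge in norm. One small caveat: your closing heuristic that $e^{-p\f}$ ``oscillates only by bounded factors'' on a disk of relative radius $c|a|$ is not literally true --- since $\log^2|z|-\log^2|a| = O(\log|a|)$ there, the ratio of weights can grow like a power of $|a|$ --- but this does not affect your proof, because the inequality you actually use is the one already proved exactly via the isometries $T^n_{\a}$, not via that heuristic.
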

        \begin{proof}
            By Theorem \ref{thrm: CIS characterisation} a sequence $\G = \{ \g_n = e^{\frac{n + 2/p - 1}{2\a}} \}_{n \in \Z}$ is complete interpolating for $\F^p_{\a}$. We set the canonical product
            \begin{equation*}
                G(z) = \prod_{m \ge 0} \left( 1 - \frac{\g_{-m}}{z} \right) \prod_{n \ge 1} \left( 1 - \frac{z}{\g_n} \right).  
            \end{equation*}
            and we get an estimate 
            \begin{equation*}
                |G(z)| \asymp d_{\log}(z, \G) |z|^{1/2} \frac{e^{\f(z)}}{|z|^{2/p}},
            \end{equation*}
            see \cite[Lemma 2]{SmallFockCIS}.
            It follows that
            \begin{equation*}
                g_n(z) = \frac{G(z)}{G'(\g_n)(z - \g_n)}, \ n \in \Z
            \end{equation*}
            belong to $\F^p_{\a}$. Moreover, $g_n(\g_m) = \de_{nm}$. Thus, since $\G$ is complete interpolating, for any $f \in \F^p_{\a}$ we have 
            \begin{equation*}
                f = \sum_{n \in \Z} f(\g_n)g_n,
            \end{equation*}
            with convergence in $\F^p_{\a}$. We conclude that it is enough to show that each $g_n, \ n \in \Z$ can be approximated by Laurent polynomials in $\F^p_{\a}$. We prove an even stronger claim.
            \begin{clm}
                The Laurent series of $g_n, \ n \in \Z$ converges to it in $\F^p_{\a}$.
            \end{clm}
            Put
            \begin{equation*}
                g_n(z) = \sum_{m \in \Z} a_m z^m, \ z \in \C.
            \end{equation*}
            For any $m \in \Z$ and $R > 0$ we have
            \begin{equation*}
                a_m = \frac{1}{2\pi R^m} \int_{-\pi}^{\pi} g_n(R e^{it}) e^{-imt} dt. 
            \end{equation*}
            Hence,
            \begin{equation*}
                |a_m| \le R^{-m} \max_{|z| = R} |g_n(z)|.
            \end{equation*}
            For $m \ge 0$ we consider $R_m = e^{\frac{m + 1/2 + 2/p}{2\a}}$. For $|z| \to \infty$ we have $|g_n(z)| \asymp |z|^{-1} |G(z)|$, therefore,
            \begin{equation*}
                |a_m| \lesssim R_m^{-m} R_m^{-1/2} \frac{e^{\f(R_m)}}{R_m^{2/p}} = e^{- \frac{(m + 1/2 + 2/p)^2}{4\a}}, \ m \ge 0.
            \end{equation*}
            Similarly, for $m < 0$ we consider $R_m = e^{\frac{m - 1/2 + 2/p}{2\a}}$. For $|z| \to 0$ we have $|g_n(z)| \asymp |G(z)|$, therefore,
            \begin{equation*}
                |a_m| \lesssim R_m^{-m} R_m^{1/2} \frac{e^{\f(R_m)}}{R_m^{2/p}} = e^{- \frac{(m - 1/2 + 2/p)^2}{4\a}}, \ m < 0.
            \end{equation*}
            Thus, in general, we can estimate
            \begin{equation*}
                |a_m| \lesssim e^{- \frac{(m + 2/p)^2 + |m|}{4\a}}, \ m \in \Z.
            \end{equation*}
            Together with the estimate on $\|z_n\|$, Corollary \ref{crl: zn norm estimate}, we get
            \begin{equation*}
                \|a_m z^m\|_{\F^p_{\a}} \lesssim e^{-\frac{|m|}{4 \a}}, \ m \in \Z.
            \end{equation*}
            We conclude that the Laurent series of $g_n$ converges absolutely in $\F^p_{\a}$ and thus converges to $g_n$ in $\F^p_{\a}$.
        \end{proof}  
    \subsection{Properties of sampling}
        \subsubsection{Weak limits of sequences}
        Similarly to the case $p = \infty$, the sampling property is conserved by weak limits for finite $p > 0$.
        \begin{lem} \label{lem: weak sampling p = 2}
            Fix $0 < p < \infty$. Suppose $\L_n \subset \C_0$ are separated with $d_{\L_n} \ge d$, $\L_n$ are SS for $\F^p_\a$ with $B_{\L_n} \le B$ and $\L_n$ converge weakly to $\L$. Then $\L$ is SS for $\F^p_\a$ with $B_{\L} \le B$.
        \end{lem}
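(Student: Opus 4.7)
The plan is to reduce the upper sampling inequality to Laurent polynomials and then pass to the weak limit in $\|f\|^p_{\F^p_\a} \le B \|f|_{\L_n}\|^p_{p,\a}$, exploiting uniform separation and the super-polynomial decay of the Gaussian weight. Weak limits preserve separation, $d_\L \ge \limsup_n d_{\L_n} \ge d$, so Proposition \ref{prop: separation to restriction} applied to $\L$ gives the lower bound in \eqref{eq: sampling definition finite p} with $A$ depending only on $d$ and $p$; it remains to prove the upper bound $\|f\|^p_{\F^p_\a} \le B \|f|_\L\|^p_{p,\a}$. By Lemma \ref{lem: Laurent pol density}, Laurent polynomials are dense in $\F^p_\a$, and the same Proposition \ref{prop: separation to restriction} shows that the functional $f \mapsto \|f|_\L\|^p_{p,\a}$ is continuous on $\F^p_\a$ (using the $p$-subadditivity $|u+v|^p \le |u|^p + |v|^p$ when $0 < p \le 1$ and the ordinary triangle inequality when $p \ge 1$); both sides of the target inequality are thus continuous in $f$, so it suffices to establish the upper bound when $f$ is a Laurent polynomial.

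Fix such an $f$ and put $g(z) = |f(z)|^p |z|^2 e^{-p\a \log^2|z|}$, so that $\|f|_{\L'}\|^p_{p,\a} = \sum_{\l \in \L'} g(\l)$ for any $\L' \subset \C_0$. Since $f$ has polynomial growth in $|z|$ and $1/|z|$, the Gaussian factor $e^{-p\a \log^2|z|}$ forces $g$ to decay super-polynomially at both $0$ and $\infty$. Uniform $d_{\log}$-separation gives a constant $K = K(d)$ bounding the number of points of any $\L_n$ (and of $\L$) in every annulus $\{e^t \le |z| \le e^{t+1}\}$, so the tail sums $\sum_{\l \in \L_n,\, |\l| \notin [a, b]} g(\l)$ can be made arbitrarily small by choosing $a$ small and $b$ large, uniformly in $n$, and analogously for $\L$. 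For the bulk part, pick $a, b$ such that no point of $\L$ lies on $\{|z| = a\}$ or $\{|z| = b\}$; weak convergence produces for large $n$ an $\e$-perturbation bijection between $\L_n \cap \{a < |\l| < b\}$ and $\L \cap \{a < |\l| < b\}$, and uniform continuity of $g$ on the closed bulk annulus combined with the uniform cardinality bound yields convergence of the bulk sums.

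Combining these two parts gives $\|f|_{\L_n}\|^p_{p,\a} \to \|f|_\L\|^p_{p,\a}$ for every Laurent polynomial $f$, and passing to the limit in $\|f\|^p_{\F^p_\a} \le B \|f|_{\L_n}\|^p_{p,\a}$ concludes the proof. I expect the only delicate point to be the uniform tail estimate, which rests on the Gaussian decay overwhelming the polynomial growth of $|f|^p$ together with the uniform bound on the number of points of $\L_n$ in each dyadic $d_{\log}$-annulus; once this is in place, the bulk convergence is a routine weak-limit argument.
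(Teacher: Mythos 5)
Your proposal is correct and follows essentially the same route as the paper: reduce to Laurent polynomials via Lemma \ref{lem: Laurent pol density} and the boundedness of the restriction operator, control the tails uniformly in $n$ using the Gaussian decay of $|f(z)|^p|z|^2e^{-p\a\log^2|z|}$ together with uniform separation, and pass to the limit in the remaining finitely many points via weak convergence. The only cosmetic differences are that you prove full convergence $\|f|_{\L_n}\|^p_{p,\a}\to\|f|_\L\|^p_{p,\a}$ (organized by annuli) whereas the paper only needs the one-sided estimate through an enumeration $\l^{(n)}_k\to\l_k$; both yield the same conclusion.
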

        \begin{proof}
            We need to show that for any $f \in \F^p_\a$
            \begin{equation*}
                \| f \|_{\F^p_{\a}}^p \le B \| f|_\L \|^p_{p, \a}.
            \end{equation*}
            Since the restriction operator $f \mapsto f|_\L$ is bounded by Proposition \ref{prop: separation to restriction}, it is enough to show the desired inequality on a dense subset of $\F^p_{\a}$. By Lemma \ref{lem: Laurent pol density}, the Laurent polynomials are dense in $\F^p_{\a}, \ 0 < p < \infty$.
            Hence, we assume that $f$ is a Laurent polynomial. Let us enumerate $\L = ( \l_k )_{k \in \Z}$ so that $|\l_k| \le |\l_{k + 1}|$. We do the same for all $\L_n = ( \l^{(n)}_k )_{k \in \Z}$. Now, since $\L_n$ converge weakly to $\L$, by making an appropriate shift in enumeration, we can assume $\l^{(n)}_k \to \l_k, \ k \in \Z$. Let $N$ denote the minimal power of $z$ in $f$ and $M$ the maximal power. Then for $|z| \to \infty$ we have
            \begin{align*}
                |f(z)|^p |z|^2 e^{-p\f(z)} & \lesssim |z|^{pM + 2} e^{-p\f(z)} = e^{(pM + 2)\log|z| - p\a \log^2|z|} \\
                & \lesssim e^{-p\a \left( \log|z| - \frac{pM + 2}{2p\a} \right)^2}
            \end{align*}
            Similarly, for $|z| \to 0$,
            \begin{equation*}
                |f(z)|^p |z|^2 e^{-p\f(z)} \lesssim e^{-p\a \left( \log|z| - \frac{pN + 2}{2 p \a} \right)^2}.
            \end{equation*}
            Thus, since $\l^{(n)}_0 \to \l_0$ and all $\L_n$ are uniformly separated, there is a sufficiently small $\de > 0$ and a sufficiently large $K_0 > 0$ such that for $k \ge K_0$  
            \begin{equation*}
                |f(\l^{(n)}_k)|^p |\l^{(n)}_k|^2 e^{-p\f(\l^{(n)}_k)} \lesssim e^{-p\a \left( \de k - \frac{pM + 2}{2p\a} \right)^2},
            \end{equation*}
            and, similarly, for $k \le -K_0$
            \begin{equation*}
                |f(\l^{(n)}_k)|^p |\l^{(n)}_k|^2 e^{-p\f(\l^{(n)}_k)} \lesssim e^{-p\a \left( -\de k + \frac{pN + 2}{2 p \a} \right)^2}.
            \end{equation*}
            That means that for any $\e > 0$ we can choose $K \ge K_0$ such that
            \begin{equation*}
                \sum_{|k| > K} |f(\l^{(n)}_k)|^p |\l^{(n)}_k|^2 e^{-p\f(\l^{(n)}_k)} < \e, \quad n \in \N.
            \end{equation*}
            Since $B_{\L_n} \le B$, we have
            \begin{equation*}
                \frac{1}{B} \| f \|^p_{\F^p_{\a}} \le \sum_{k \in \Z} |f(\l^{(n)}_k)|^p |\l^{(n)}_k|^2 e^{-p\f(\l^{(n)}_k)} \le \sum_{|k| \le K} |f(\l^{(n)}_k)|^p |\l^{(n)}_k|^2  e^{-p\f(\l^{(n)}_k)} + \e.
            \end{equation*}
            We pass to the limit $n \to \infty$ in these finitely many points to get
            \begin{equation*}
                \frac{1}{B} \| f \|^p_{\F^p_{\a}} \le \sum_{|k| \le K} |f(\l_k)|^p |\l_k|^2  e^{-p\f(\l_k)} + \e .    
            \end{equation*}
            Hence,
            \begin{equation*}
                \frac{1}{B} \| f \|^p_{\F^p_{\a}} \le \| f|_{\L} \|^p_{p, \a} + \e, \quad \e > 0.
            \end{equation*}
            Taking $\e \to 0$, we get the desired
            \begin{equation*}
                \| f \|^p_{\F^p_{\a}} \le B \| f|_{\L} \|^p_{p, \a}.  
            \end{equation*}  
        \end{proof}
        \begin{crl}
            Let $0 < p < \infty$. If separated $\L$ is ShS for $\F^p_\a$, then any $\tilde \L \in W(\L)$ is ShS for $\F^p_\a$ as well.
        \end{crl}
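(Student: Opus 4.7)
The plan is to split the two sampling inequalities defining ShS: the upper bound is the nontrivial one and will be handled via Lemma \ref{lem: weak sampling p = 2}, while the lower bound for separated sets is automatic from Proposition \ref{prop: separation to restriction}. Concretely, I need to show that for every $c > 0$ the dilated set $c\tilde\L$ is SS for $\F^p_\a$ with $A_{c\tilde\L}$ bounded below and $B_{c\tilde\L}$ bounded above uniformly in $c$.

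First I would record that $d_{\log}$ is invariant under multiplication by any $c \in \C_0$: each of its two summands is preserved, since $\log|cz|-\log|cw| = \log|z|-\log|w|$ and $cz/|cz| - cw/|cw| = (c/|c|)(z/|z| - w/|w|)$. Consequently the separation constant of $\L$ coincides with that of every dilate $c\L$. Writing $\tilde\L$ as the weak limit of some sequence $c_n\L$, the inequality $d_{\tilde\L} \ge \limsup d_{c_n\L} = d_\L$ recorded right after the definition of weak convergence gives that $\tilde\L$ is separated with constant at least $d_\L$. More generally, for any fixed $c > 0$, the dilate $c\tilde\L$ is the weak limit of $cc_n\L$: this follows directly from the definition of weak convergence together with the scale-invariance of $d_{\log}$, since an $\e$-perturbation of $\{\mu \in c_n\L : a < |\mu| < b\}$ translates, after multiplication by $c$, into an $\e$-perturbation of $\{\l \in cc_n\L : c a < |\l| < c b\}$. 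In particular $c\tilde\L$ is separated with constant at least $d_\L$, so Proposition \ref{prop: separation to restriction} yields $A_{c\tilde\L} \ge A$ for a constant $A > 0$ depending only on $d_\L$ and $p$, uniformly in $c$.

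For the upper bound, by the ShS hypothesis $B := \sup_{c > 0} B_{c\L}$ is finite. Fix any $c > 0$ and consider the sequence $cc_n\L$: all its members share the uniform separation constant $d_\L$ and all satisfy $B_{cc_n\L} \le B$. Since $cc_n\L$ converges weakly to $c\tilde\L$, Lemma \ref{lem: weak sampling p = 2} directly yields $B_{c\tilde\L} \le B$. Taking the supremum over $c > 0$, we obtain $\sup_{c > 0} B_{c\tilde\L} \le B < \infty$, which combined with the uniform lower bound from the previous paragraph shows that $\tilde\L$ is ShS for $\F^p_\a$.

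I do not anticipate any real obstacle: the two preliminary results in Section \ref{sect: prelim p < inf} are tailored for exactly this kind of transfer argument, and the two small verifications, scale-invariance of $d_{\log}$ and scale-stability of weak convergence, are routine from the definitions. The only mild subtlety is that ShS requires a uniform bound in $c$ rather than merely SS of each dilate, but this is already built into the hypothesis through the uniform constant $B$ and the fact that dilation leaves separation (and hence the automatic constant $A$) unchanged.
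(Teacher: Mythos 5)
Your proposal is correct and follows the route the paper intends for this corollary: apply Lemma \ref{lem: weak sampling p = 2} to the dilated sequences $cc_n\L$ (uniformly separated and with $B_{cc_n\L}$ uniformly bounded by the ShS hypothesis), which converge weakly to $c\tilde\L$ by the scale-invariance of $d_{\log}$, and obtain the lower sampling bound for free from Proposition \ref{prop: separation to restriction}. The two auxiliary verifications you spell out (invariance of $d_{\log}$ under dilation and stability of weak convergence under dilation) are exactly the routine facts the paper leaves implicit, so nothing is missing.
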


        \subsubsection{Removing a point from a sampling sequence}
        We say that $\L$ is \emph{minimal} for $\F^p_\a$ if, for each $\tilde \l \in \L$, $\L \setminus \{ \tilde \l \}$ is not a uniqueness set for $\F^p_\a$. 
        
        Note that if $\L \setminus \{ \tilde \l_0 \}$ is not a uniqueness set for $\F^p_\a$ for some $\tilde \l_0 \in \L$, then $\L \setminus \{ \tilde \l \}$ is not a uniqueness set for $\F^p_\a$ for any $\tilde \l \in \L$, and hence, $\L$ is minimal for $\F^p_\a$. Indeed, if a non-zero $f \in \F^p_\a$ is such that $f|_{\L \setminus \{ \tilde \l_0 \}} = 0$, then $f_{\tilde \l}(z) = f(z) \frac{z - \tilde \l_0}{z - \tilde \l} \in \F^p_\a$ is a non-zero function such that $f_{\tilde \l}|_{\L \setminus \{ \tilde \l \}} = 0$.
        Similarly, note that being complete interpolating is the same as being sampling and minimal. 
        
        \begin{lem} \label{lem: sampling or minimal}
            Suppose that a separated $\L$ is SS for $\F^p_\a$. Then $\L$ is minimal or $\L \setminus \{\l' \}$ is SS for $\F^p_\a$, $\l' \in \L$.
        \end{lem}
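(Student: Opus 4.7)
The plan is to rewrite the dichotomy in terms of uniqueness. The observation directly preceding the lemma shows that if $\L$ fails to be minimal, then $\L \setminus \{\tilde \l\}$ is a uniqueness set for $\F^p_\a$ for every $\tilde \l \in \L$. Fixing $\l' \in \L$ and writing $\L' = \L \setminus \{\l'\}$, it therefore suffices to prove the following implication: \emph{if $\L'$ is a uniqueness set for $\F^p_\a$, then $\L'$ is SS}. Since $\L'$ inherits separation from $\L$, the lower bound $A\|f|_{\L'}\|_{p,\a}^p \le \|f\|_{\F^p_\a}^p$ is automatic from Proposition \ref{prop: separation to restriction}, so only the upper bound $\|f\|_{\F^p_\a}^p \le B' \|f|_{\L'}\|_{p,\a}^p$ needs attention.

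I would argue by contradiction: assume the upper bound fails, producing a sequence $f_n \in \F^p_\a$ with $\|f_n\|_{\F^p_\a} = 1$ and $\|f_n|_{\L'}\|_{p,\a} \to 0$. The pointwise evaluation estimate of Corollary \ref{crl: eval bound} turns the uniform $\F^p_\a$-bound on $(f_n)$ into a locally uniform bound on $\C_0$, so by Montel's theorem I can pass to a subsequence $f_n \to f$ converging locally uniformly on $\C_0$, with $f \in \Hol(\C_0)$. Two applications of Fatou's lemma---one to the area integral defining the $\F^p_\a$-norm, one to the discrete weighted sum defining $\|\cdot|_{\L'}\|_{p,\a}$---should then yield $\|f\|_{\F^p_\a}^p \le 1$ (so $f \in \F^p_\a$) and $\|f|_{\L'}\|_{p,\a}^p = 0$ (so $f$ vanishes on $\L'$).

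The sampling hypothesis on $\L$ is used precisely to ensure $f \ne 0$. Applying the upper sampling inequality for $\L$ to $f_n$ yields
\begin{equation*}
    1 = \|f_n\|_{\F^p_\a}^p \le B \|f_n|_{\L'}\|_{p,\a}^p + B |f_n(\l')|^p |\l'|^2 e^{-p\f(\l')},
\end{equation*}
and since the first summand tends to zero, $|f_n(\l')|^p |\l'|^2 e^{-p\f(\l')} \ge 1/(2B)$ for all large $n$. Passing to the limit, $|f(\l')|^p |\l'|^2 e^{-p\f(\l')} \ge 1/(2B) > 0$, so $f(\l') \ne 0$ and in particular $f \ne 0$. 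Thus $f$ is a non-zero element of $\F^p_\a$ vanishing on $\L'$, contradicting the assumption that $\L'$ is a uniqueness set.

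The main subtlety is extracting a non-zero limit $f \in \F^p_\a$: Fatou simultaneously controls the $\F^p_\a$-norm and the zero set on $\L'$, while the sampling inequality for $\L$ is the crucial ingredient that forces the $\ell^p(\L)$-mass of $f_n$ to accumulate at the single removed point $\l'$, which is what makes the limit non-trivial there and delivers the contradiction.
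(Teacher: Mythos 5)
Your proposal is correct and follows essentially the same route as the paper: reduce to the failure of the upper sampling inequality for $\L' = \L \setminus \{\l'\}$ (the lower bound being automatic from separation by Proposition \ref{prop: separation to restriction}), extract a locally uniform limit $f$ of a normalized extremal sequence, use the sampling inequality for $\L$ to force $f(\l') \ne 0$, and conclude that the non-zero $f \in \F^p_\a$ vanishes on $\L'$; the paper phrases this as producing a non-uniqueness function for $\L'$ (hence minimality of $\L$), while you phrase it as the contrapositive "uniqueness of $\L'$ implies SS", which is the same argument. The only difference is that you spell out the Montel/Fatou details that the paper leaves implicit.
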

        \begin{proof}
            Suppose $\L \setminus \{\l' \}$ is not SS for $\F^p_\a$, for some $\l' \in \L$. To prove that $\L$ is minimal for $\F^p_\a$, it suffices to show that $\L \setminus \{\l' \}$ is not a uniqueness set for $\F^p_\a$. Since $\L \setminus \{\l' \}$ is not SS for $\F^p_\a$, there exists $f_j \in \F^p_\a$ such that $\| f_j \|_{\F^p_{\a}} = 1$ and $\| f_j|_{\L \setminus \{\l' \}} \|_{p, \a} \to 0$. $\L$ is SS for $\F^p_\a$, so $\| f_j|_{\L} \|_{p, \a} \gtrsim 1$. Hence, $|f_j(\l')| \gtrsim 1$. Taking a subsequence, we can assume that $f_j \xrightarrow{u.c.c.} f$. Then $f \in \F^p_\a$ and $|f(\l')| = \lim_{j \to \infty} |f_j(\l')| > 0$, hence $f \ne 0$. At the same time $f_j(\l) \to 0$ for any $\l \in \L \setminus \{\l' \}$. Thus, $f|_{\L \setminus \{\l' \}} = 0$.
        \end{proof}

        The following lemma is the key element that allows us to pass the necessary density condition from Theorem \ref{thrm: ShS p = infty} to Theorem \ref{thrm: ShS p = 2}.
        
        \begin{lem} \label{lem: removing point sampling}
            Fix $0 < p < \infty$. Suppose $\L \subset \C_0$ is separated and for any $c > 0$, the set $c \L$ is SS for $\F^p_\a$. Then removing one point from $\L$ does not change this property, i.e., for any $\tilde \l \in \L$ and any $c > 0$ $c (\L \setminus \{ \tilde \l \})$ is SS for $\F^p_\a$.
        \end{lem}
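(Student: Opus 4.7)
The approach is by contradiction, using the complete interpolating sequence characterisation (Theorem \ref{thrm: CIS characterisation}) together with Lemma \ref{lem: sampling or minimal}.

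Suppose there exist $c_0 > 0$ and $\tilde\l \in \L$ such that $c_0(\L \setminus \{\tilde\l\})$ is not SS for $\F^p_\a$. Since $c_0\L$ is SS by hypothesis, Lemma \ref{lem: sampling or minimal} forces $c_0\L$ to be minimal, hence a complete interpolating sequence for $\F^p_\a$. Theorem \ref{thrm: CIS characterisation} then enumerates $c_0\L = (\mu_k)$ as $\mu_k = e^{(k + 2/p + \de_k)/(2\a)} e^{i\theta_k}$, where $(\de_k) \in \ell^\infty(\Z)$ satisfies condition (iii) with a unique integer $m_0$ (Remark \ref{rmk: m shift}).

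For $\tau \in \R$, the shifted sequence $c(\tau)\L := e^{\tau/(2\a)} c_0 \L$ has associated $\de$-sequence $(\de_k + \tau)$. As $\tau$ varies continuously, the uniqueness of $m_0$ produces a critical value $\tau_* \in \R$ at which no integer $m$ admits an $\e' > 0$ satisfying condition (iii) for $(\de_k + \tau_*)$, so $c(\tau_*)\L$ is not CIS for $\F^p_\a$. By the ShS hypothesis, however, $c(\tau_*)\L$ is still SS for $\F^p_\a$. Applying Lemma \ref{lem: sampling or minimal}, since $c(\tau_*)\L$ is not minimal, one can remove a point and retain SS. Iterating this procedure (or invoking the analogue of Theorem 1.3 of \cite{SmallFock} that any ShS sequence contains a CIS subsequence), one extracts a complete interpolating subsequence $\L_* \subsetneq c(\tau_*)\L$ for $\F^p_\a$.

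Both $c(\tau_*)\L$ and $\L_*$ have logarithmic density $2\a$, so the complement $c(\tau_*)\L \setminus \L_*$ has density zero. A bookkeeping argument — controlling how the removal of a density-zero subset perturbs the partial Cesàro averages of $(\de_k + \tau_*)$ through the induced re-enumeration of $\L_*$ inside $c(\tau_*)\L$ — shows that condition (iii) for $\L_*$ with its own unique integer $m_*$ lifts to condition (iii) for $c(\tau_*)\L$, producing a valid integer for $c(\tau_*)\L$ and contradicting the choice of $\tau_*$. The main technical obstacle is precisely this lifting step. A cleaner alternative would be to bypass the extraction of $\L_*$ and construct directly an explicit test function in $\F^p_\a$ — for instance, a suitably normalised canonical product associated with $c_0\L$, combined with the shift operator $T_\a$ from Proposition \ref{prop: shift operator} — whose evaluations on $c(\tau_*)\L$ are small relative to its $\F^p_\a$-norm, thereby witnessing the failure of sampling at that shift.
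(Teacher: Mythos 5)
Your opening is the same as the paper's: assume $c_0(\L\setminus\{\tilde\l\})$ is not SS, use Lemma \ref{lem: sampling or minimal} to conclude $c_0\L$ is minimal hence complete interpolating, and then move the shift parameter to locate a critical value $\tau_*$ (the paper's $t_0$, defined via a supremum over $[0,1]$ and shown to lie strictly inside) at which the shifted sequence is SS but no longer CIS. After that point, however, your argument has a genuine gap, and the step you yourself flag as ``the main technical obstacle'' is not merely hard — it is false. If $\L_*\subsetneq c(\tau_*)\L$ is a complete interpolating sequence obtained by deleting at least one point, then condition \ref{pr: 3 two-sided} of Theorem \ref{thrm: CIS characterisation} can never ``lift'' to $c(\tau_*)\L$: reinserting $r\ge 1$ points shifts the enumeration index of one tail of the sequence by $r$, so the window averages of the $\de_k$'s on the two tails differ asymptotically by the integer $r$, and no single $m$ (Remark \ref{rmk: m shift}) can bring both tails within $\tfrac12-\e$ of $0$; if infinitely many points are reinserted, already condition \ref{pr: 2 two-sided} fails. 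Equivalently, a CIS sequence can never properly contain another CIS sequence, so the contradiction you aim for at $\tau_*$ cannot be produced this way. In addition, the extraction of $\L_*$ is itself unsupported: ``iterating'' Lemma \ref{lem: sampling or minimal} need not terminate, and the statement that every ShS sequence contains a CIS subsequence is presented in the paper only as a consequence of the main density theorems, which are proved \emph{using} the present lemma, so invoking it here is circular.

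The paper closes the argument differently, and this is the part your proposal is missing. At the critical shift $t_0$ one removes a single point $e^{t_0/(2\a)}\l_0$; by Lemma \ref{lem: sampling or minimal} the result is SS, and it is then shown to be \emph{minimal} by an explicit construction: from minimality of the original $\L$ one takes $f\in\F^p_\a$ vanishing on $\L\setminus\{\l_k\}$ and checks that
\begin{equation*}
    g(z) = z\,\frac{f\bigl(e^{-\frac{t_0}{2\a}}z\bigr)}{z - e^{\frac{t_0}{2\a}}\l_0}
\end{equation*}
lies in $\F^p_\a$ (here $0<t_0<1$ is used in the norm estimate) and vanishes on $e^{t_0/(2\a)}\L$ off the images of $\l_0,\l_k$. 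Hence $e^{t_0/(2\a)}\L\setminus\{e^{t_0/(2\a)}\l_0\}$ is SS and minimal, i.e.\ CIS, and since the conditions of Theorem \ref{thrm: CIS characterisation} are stable under small changes of the shift, it remains CIS for $t$ slightly below $t_0$ — where, by the definition of $t_0$, the full sequence $e^{t/(2\a)}\L$ is also CIS. Two complete interpolating sequences, one properly contained in the other, is the genuine contradiction. Your concluding suggestion of building an explicit test function witnessing failure of sampling at $\tau_*$ is not needed (and would contradict the ShS hypothesis rather than exploit it); the missing ingredient is precisely the minimality argument and the perturbation step just described.
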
        
        \begin{rmk}
            It follows that one can remove finitely many points from $\L$ without changing this property.
        \end{rmk}

        This lemma is equivalent to the non-existence of shift-invariant complete interpolating sequences. It plays the same role as Lemma 2.1 in \cite{SeipBargFock}.
        
        \begin{proof}
            It is enough to prove that $\L \setminus \{ \tilde \l \}$ is SS for $\F^p_\a$, since one can always swap $\L$ for $c \L, \ c > 0$.
            
            Suppose $\L \setminus \{ \tilde \l \}$ is not SS for $\F^p_\a$. Then by Lemma \ref{lem: sampling or minimal} $\L$ is minimal and, hence, a complete interpolating sequence for $\F^p_\a$. We use Theorem \ref{thrm: CIS characterisation}. Then, writing $\L = ( \l_k )_{k \in \Z}$ with $|\l_k| \le |\l_{k + 1}|$ and shifting the numeration appropriately we have $\l_k = e^{\frac{k + 2/p - 1 + \de_k}{2\a}} e^{i \theta_k}$ with $(\de_k)_{k \in \Z} \in \ell^{\infty}(\Z)$ and there exist $N \in \N$ and $\de > 0$ such that
            \begin{equation*}
                \sup_{n \in \Z} \left| \frac{1}{N} \sum_{k = n + 1}^{n + N} \de_k \right| \le \de < \frac{1}{2}.
            \end{equation*}
            For a shift $e^{\frac{t}{2\a}} \L$ let us write $e^{\frac{t}{2\a}} \L = (e^{\frac{t}{2\a}} \l_k)_{k \in \Z}$, $e^{\frac{t}{2\a}} \l_k = e^{\frac{k + 2/p - 1 + \de_k + t}{2\a}} e^{i \theta_k}$. From Remark \ref{rmk: m shift}, $e^{\frac{t}{2\a}} \L$ is a complete interpolating sequence if there exists a unique $m = m(t)$, such that there are $M \in \N$ and $\de' > 0$ such that
            \begin{equation} \label{eq: averaging property}
                \sup_{n \in \Z} \left| \frac{1}{M} \sum_{k = n + 1}^{n + M} \de_k + t + m \right| \le \de' < \frac{1}{2}.
            \end{equation}
            Set
            \begin{equation*}
                t_0 = \sup \left\{t \in [0, 1]: e^{\frac{s}{2\a}} \L \text{ is complete interpolating with } m(s) = 0, \text{ for all } s \in [0, t] \right\}.
            \end{equation*}
            Clearly, $t_0 \ge 0$, since for $t = 0$ we have $m(0) = 0$. Moreover, for small enough $t > 0$ we have the property \eqref{eq: averaging property} with $m(t) = 0$. Thus, $t_0 > 0$. On the other hand, for $t = 1$ we have $m(1) = -1$, so, similarly, we have the property \eqref{eq: averaging property} with $m(t) = -1$ for $t < 1$ with $1 - t$ small enough, so $t_0 < 1$.
            
            We claim that $e^{\frac{t_0}{2\a}} \L$ is not a complete interpolating sequence for $\F^p_\a$. Suppose it is, then if $m(t_0) = 0$, then we have the property \eqref{eq: averaging property} with $m(t) = 0$ for $t > t_0$ with $t - t_0$ small enough, which contradicts the definition of $t_0$. If $m(t_0) \ne 0$, then we have the property \eqref{eq: averaging property} with $m(t) = m(t_0)$ for $t < t_0$ with $t_0 - t$ small enough, which again contradicts the definition of $t_0$.
            
            Since $e^{\frac{t_0}{2\a}} \L$ is SS for $\F^p_\a$ and not minimal, then, by Lemma \ref{lem: sampling or minimal}, $e^{\frac{t_0}{2\a}} \L \setminus \{ e^{\frac{t_0}{2\a}} \l_0 \}$ is SS for $\F^p_\a$. We claim that $e^{\frac{t_0}{2\a}} \L \setminus \{ e^{\frac{t_0}{2\a}} \l_0 \}$ is minimal for $\F^p_\a$. For any $\l_k \in \L, \ k \ne 0$ there is $f \in \F^p_\a$ such that $f(\l) = 0, \ \l \in \L, \ \l \ne \l_k$. Now consider
            \begin{equation*}
                g(z) = z \frac{f \left( e^{-\frac{t_0}{2\a}} z \right)}{z - e^{\frac{t_0}{2\a}}\l_0 }.
            \end{equation*}
            Clearly, $g(e^{\frac{t_0}{2\a}}\l) = 0$, $\l \in \L$ with $\l \ne \l_0, \l_k$. It remains to check that $g \in \F^p_\a$.
            \begin{equation*}
                \| g \|^p_{\F^p_{\a}} = \int_{\C_0} |g(z)|^p e^{-p \f(z)} dA(z).
            \end{equation*}
            Substituting $z = e^{\frac{t_0}{2\a}}w$ and noting 
            \begin{equation*}
                \f(z) = \a \log^2 |z| = \a \left( \log|w| + \frac{t_0}{2\a} \right)^2 = \f(w) + t_0 \log|w| + \frac{t_0^2}{4\a}
            \end{equation*}
            we get
            \begin{equation*}
                \| g \|^p_{\F^p_{\a}} \lesssim \int_{\C_0} \left| \frac{wf(w)}{w - \l_0} \right|^p e^{-p\f(w)} |w|^{-pt_0} dA(w).
            \end{equation*}
            It is enough to notice that for large $|w|$ we have
            \begin{equation*}
                \left| \frac{wf(w)}{w - \l_0} \right|^p e^{-p\f(w)} |w|^{-pt_0} \lesssim |f(w)|^p e^{-p\f(w)},
            \end{equation*}
            while for small $|w|$ we have
            \begin{equation*}
                \left| \frac{wf(w)}{w - \l_0} \right|^p e^{-p\f(w)} |w|^{-pt_0} \lesssim |w|^{p - pt_0} |f(w)|^p e^{-p\f(w)} \le |f(w)|^p e^{-p\f(w)}.
            \end{equation*}
            Thus, we proved that $e^{\frac{t_0}{2\a}} \L \setminus \{ e^{\frac{t_0}{2\a}} \l_0,  e^{\frac{t_0}{2\a}} \l_k \}$ is not a uniqueness set for $\F^p_\a$ for any $\l_k \in \L, \l_k \ne \l_0$. Hence, $e^{\frac{t_0}{2\a}} \L \setminus \{ e^{\frac{t_0}{2\a}} \l_0 \}$ is minimal and SS for $\F^p_\a$. So, it is a complete interpolating sequence for $\F^p_\a$. Then, by Theorem \ref{thrm: CIS characterisation}, $e^{\frac{t}{2\a}} \L \setminus \{ e^{\frac{t}{2\a}} \l_0 \}$ is also a complete interpolating sequence for $t < t_0$ provided $t_0 - t$ is small enough. But by the definition of $t_0$, $e^{\frac{t}{2\a}} \L$ is itself a complete interpolating sequence, a contradiction.
        \end{proof}

        \subsubsection{From infinite \texorpdfstring{$p$}{p} to finite \texorpdfstring{$p$}{p}} 
        To pass the sufficient density condition from $p = \infty$ to $0 < p < \infty$ we prove the following. 
        \begin{lem} \label{lem: alpha+eps, p = infty to alpha, p = 2}
            Let $0 <  p < \infty$. For any $\e > 0$ there exists $C_{\e} > 0$ such that if separated $\L$ is SS for $\F^{\infty}_{(1 + \e)\a}$ with $K_{\L} = K$, then $\L$ is SS for $\F^p_\a$ with $B_{\L} \le C_{\e} K^p$.
        \end{lem}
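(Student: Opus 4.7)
I will use a Seip-type peaking-function argument: for each $z \in \C_0$ I construct an analytic function $\psi_z$ on $\C_0$ whose weighted modulus $|\psi_z(w)|e^{-\e\a\log^2|w|}$ is a Gaussian in $\log|w|$ centered at $\log|z|$. Multiplying $f \in \F^p_\a$ by $\psi_z$ trades the extra weight $e^{\e\a\log^2|\cdot|}$ for concentration at $z$, producing a function in $\F^{\infty}_{(1+\e)\a}$; the hypothesis is then applied to that function and the resulting pointwise bound is integrated to recover the $\F^p_\a$-sampling inequality.

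\textbf{Step 1 (peaking function).} Let $n(z) \in \Z$ be the nearest integer to $2\e\a\log|z|$ and set
\begin{equation*}
    \psi_z(w) = \frac{w^{n(z)}}{|z|^{n(z)} e^{-\e\a \log^2|z|}}.
\end{equation*}
Then $|\psi_z(z)|e^{-\e\a\log^2|z|} = 1$, and writing $n(z) = 2\e\a\log|z|+\eta$ with $|\eta|\le 1/2$ and completing the square in $\log|w|$ gives, uniformly in $z,w$,
\begin{equation*}
    |\psi_z(w)|e^{-\e\a\log^2|w|} \le C_1 e^{-(\e\a/2)\log^2|w/z|},
\end{equation*}
where $C_1 = C_1(\e,\a)$ absorbs the $O(1)$ error from the rounding.

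\textbf{Step 2 (apply $\F^{\infty}_{(1+\e)\a}$ sampling).} Given $f\in \F^p_\a$, set $g_z = f\psi_z$. By Corollary \ref{crl: eval bound}, $|f(w)|e^{-\a\log^2|w|}\lesssim |w|^{-2/p}\|f\|_{\F^p_\a}$, so Step 1 shows
\begin{equation*}
    |g_z(w)|e^{-(1+\e)\a\log^2|w|} \lesssim \|f\|_{\F^p_\a}\, |w|^{-2/p}\, e^{-(\e\a/2)\log^2|w/z|},
\end{equation*}
which is bounded in $w$ (the Gaussian factor dominates $|w|^{-2/p}$ at both $0$ and $\infty$), hence $g_z \in \F^\infty_{(1+\e)\a}$. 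Applying the hypothesis to $g_z$ and using $|g_z(z)|e^{-(1+\e)\a\log^2|z|} = |f(z)|e^{-\a\log^2|z|}$ yields
\begin{equation*}
    |f(z)|e^{-\a\log^2|z|} \le K \sup_{\l \in \L} |g_z(\l)|e^{-(1+\e)\a\log^2|\l|} \le C_1 K \sup_{\l \in \L} |f(\l)|e^{-\a\log^2|\l|}\, e^{-(\e\a/2)\log^2|\l/z|}.
\end{equation*}

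\textbf{Step 3 ($\sup$-to-$\ell^p$ and integration).} Raising to the $p$-th power, dominating the supremum by the sum of positive terms, and integrating in $z$ against $dA$,
\begin{equation*}
    \|f\|_{\F^p_\a}^p \le (C_1 K)^p \sum_{\l \in \L} |f(\l)|^p e^{-p\a\log^2|\l|} \int_{\C_0} e^{-(p\e\a/2)\log^2|\l/z|}\, dA(z).
\end{equation*}
The substitution $u = \log|z/\l|$, $dA(z) = |\l|^2 e^{2u}\, du\, d\arg z$, reduces the inner integral to a Gaussian, giving $\int_{\C_0} e^{-(p\e\a/2)\log^2|\l/z|}\, dA(z) = C_2(\e, p, \a)\, |\l|^2$. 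Setting $C_\e = C_1^p C_2$ yields $\|f\|_{\F^p_\a}^p \le C_\e K^p \|f|_\L\|_{p,\a}^p$, which is the desired bound $B_\L \le C_\e K^p$.

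\textbf{Main obstacle.} The only place requiring care is getting the uniform constant in Step~1 despite the integer rounding in $n(z)$; everything else is direct computation with the explicit $\a\log^2|\cdot|$ weight. Once Step~1 is in hand, Steps 2 and 3 are mechanical.
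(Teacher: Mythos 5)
Your proof is correct, and it takes a genuinely different route from the paper. The paper follows Beurling's duality argument: it passes to $\F^{\infty,0}_{(1+\e)\a}$, represents the point evaluations on the restriction to $\L$ by $\ell^1(\L)$ sequences $g_z$ with $\|g_z\|_1\le K$ (Hahn--Banach on $c_0(\L)$), multiplies $f$ by a peaking factor $P_z$ built from the canonical product over a complete interpolating sequence for the weight $\e\a\log^2|\cdot|$ (using the product estimates of \cite[Lemma 2]{SmallFockCIS}), and then applies H\"older's inequality for $p>1$, sub-additivity for $p\le 1$, and a two-region integral estimate. You instead take as peaking factor the explicit Laurent monomial $\psi_z(w)=w^{n(z)}|z|^{-n(z)}e^{\e\a\log^2|z|}$ with $n(z)$ the nearest integer to $2\e\a\log|z|$; the computation $\log\bigl(|\psi_z(w)|e^{-\e\a\log^2|w|}\bigr)=-\e\a\log^2|w/z|+\eta\log|w/z|$, $|\eta|\le 1/2$, indeed gives the uniform Gaussian bound with $C_1=e^{1/(8\e\a)}$, and single-valuedness is exactly why the integer rounding is needed. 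You then apply the $\F^\infty_{(1+\e)\a}$-sampling inequality directly to $g_z=f\psi_z$ (membership follows from Corollary \ref{crl: eval bound} as you say), and replace H\"older/duality by the elementary bound $\bigl(\sup_\l a_\l\bigr)^p\le\sum_\l a_\l^p$ valid for all $p>0$, after which Tonelli and the substitution $u=\log|z/\l|$ produce precisely the weight $|\l|^2$ appearing in $\|f|_\L\|_{p,\a}^p$. What your argument buys is uniformity in $p$ (no case split at $p=1$), no functional-analytic duality step, and no appeal to canonical product asymptotics; what the paper's route buys is the classical Beurling template, where the $\ell^1$ representation of evaluations is reusable elsewhere. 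One cosmetic point: the conclusion ``$\L$ is SS for $\F^p_\a$'' also requires the lower bound $A_\L>0$, which as in the paper comes from separation via Proposition \ref{prop: separation to restriction}; it would be worth one sentence to say so.
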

        \begin{proof}
            We use Beurling's duality argument \cite[pp.348--358]{Beurling}, see also \cite[Lemma 2.7]{SmallFock} and \cite[Theorem 35]{marco2003interpolating}.
        
            First, consider 
            \begin{equation*}
                \F^{\infty, 0}_{\f} = \{ f \in \F^{\infty}_{\f}: |f(z)|e^{-\f(z)} \to 0, \ z \to 0, \infty \}
            \end{equation*}
            and the respective two-sided small Fock spaces $\F^{\infty, 0}_{\a}$.
            
            Since $\L$ is SS for $\F^{\infty}_{(1 + \e)\a}$
            \begin{equation*}
                T_z: ( f(\l)e^{-(1 + \e)\a \log^2|\l|} )_{\l \in \L} \mapsto f(z)e^{-(1 + \e)\a \log^2|z|}, \quad f \in \F^{\infty, 0}_{(1 + \e)\a}
            \end{equation*}
            are uniformly bounded functionals from a subspace of $c_0(\L)$ to $\C$ with $\|T_z\| \le K$. Since $\L$ is SS for $\F^{\infty}_{(1 + \e)\a}$, this subspace is closed, so we can extend $T_z$ to the entire $c_0(\L)$ without increasing the norm. Hence, $T_z \in c_0(\L)^* = \ell^1(\L)$, and so there exists $g_z \in \ell^1(\L)$ such that $\|g_z\|_1 \le K$ and
            \begin{equation*}
                f(z)e^{-(1 + \e)\a \log^2|z|} = \sum_{\l \in \L} f(\l)e^{-(1 + \e)\a \log^2|\l|} g_z(\l), \quad f \in \F^{\infty, 0}_{(1 + \e)\a}.
            \end{equation*}
            Consider $\G = ( \g_n )_{n \in \Z} =  (e^{\frac{n + 1 - 2/p}{2 \e \a}} )_{n \in \Z}$ and a function
            \begin{equation*}
                G(z) = \prod_{k = 0}^{\infty} \left(1 - \frac{\g_{-k}}{z} \right) \prod_{n = 1}^{\infty} \left(1 - \frac{z}{\g_n} \right).
            \end{equation*}
            Estimating both products, see \cite[Lemma 2]{SmallFockCIS}, we get
            \begin{equation*}
                |G(z)| \asymp e^{\e \a \log^2|z|} \frac{d_{\log}(z, \G)}{|z|^{3/2 - 2/p}}, \qquad |G'(\g)| \asymp e^{\e \a \log^2|\g|} \frac{1}{|\g|^{5/2 - 2/p}}, \quad \g \in \G,
            \end{equation*}
            For a $z \in \C_0$, we denote by $\g_z$ an element of $\G$ such that $d_{\log}(z, \G) = d_{\log}(z, \g_z)$. So, $|z/\g_z| \asymp 1$ and $d_{\log}(z, \g_z) \asymp \left| 1 - z/\g_z \right|$. We consider a function
            \begin{equation*}
                P_z(w) = \frac{G(w)}{(w - \g_z)G'(\g_z)} \frac{w^2}{z^2}.
            \end{equation*}
            We have
            \begin{equation*}
                |P_z(w)| \asymp e^{\e \a (\log^2|w| - \log^2|\g_z|)} \frac{|w|^{1/2 + 2/p}}{|z|^{-1/2 + 2/p}} \frac{d_{\log}(w, \G)}{|w - \g_z|}.
            \end{equation*}
            Now, given $f \in \F^p_\a$, from Corollary \ref{crl: eval bound} we have 
            \begin{equation*}
                |f(w)| \lesssim \frac{e^{\a \log^2|w|}}{|w|^{2/p}}.
            \end{equation*}
            Hence, $f P_z \in \F^{\infty, 0}_{(1 + \e)\a}$, and so we get
            \begin{equation*}
                f(w) P_z(w) e^{-(1 + \e)\a \log^2|w|} = \sum_{\l \in \L} f(\l) P_z(\l) e^{-(1 + \e)\a \log^2|\l|} g_w(\l).
            \end{equation*}
            Taking $w = z$, and noting
            \begin{equation*}
                |P_z(z)| \asymp e^{\e \a (\log^2|z| - \log^2|\g_z|)},
            \end{equation*}
            we get
            \begin{equation*}
                |f(z)| e^{-\a \log^2|z|} \lesssim \sum_{\l \in \L} |f(\l)| e^{-\a \log^2|\l|} \frac{|\l|^{1/2 + 2/p}}{|z|^{-1/2 + 2/p}} \frac{d_{\log}(\l, \G)}{|\l - \g_z|} |g_z(\l)|.
            \end{equation*}
            For $1 < p < \infty$, by the Hölder's inequality with $p, q > 1: \ 1/p + 1/q = 1$,
            \begin{align*}
                |f(z)|^p e^{-p\a \log^2|z|} & \lesssim \left( \sum_{\l \in \L} |f(\l)|^p e^{-p\a \log^2|\l|} \frac{|\l|^{p/2 + 2}}{|z|^{-p/2 + 2}} \frac{d_{\log}(\l, \G)^p}{|\l - \g_z|^p} \right) \left( \sum_{\l \in \L} |g_z(\l)|^q \right)^{p/q} \\
                & \le \left( \sum_{\l \in \L} |f(\l)|^p e^{-p\a \log^2|\l|} \frac{|\l|^{p/2 + 2}}{|z|^{-p/2 + 2}} \frac{d_{\log}(\l, \G)^p}{|\l - \g_z|^p} \right) \left( \sum_{\l \in \L} |g_z(\l)| \right)^p \\
                & \le K^p \sum_{\l \in \L} |f(\l)|^p |\l|^2 e^{-p\a \log^2|\l|} \frac{|\l|^{p/2}}{|z|^{-p/2 + 2}} \frac{d_{\log}(\l, \G)^p}{|\l - \g_z|^p}.
            \end{align*}
            As for $0 < p \le 1$, by the sub-additivity of $x^p$, we have
            \begin{align*}
                |f(z)|^p e^{-p\a \log^2|z|} & \lesssim \sum_{\l \in \L} |f(\l)|^p e^{-p\a \log^2|\l|} \frac{|\l|^{p/2 + 2}}{|z|^{-p/2 + 2}} \frac{d_{\log}(\l, \G)^p}{|\l - \g_z|^p} |g_z(\l)|^p \\
                & \le K^p \sum_{\l \in \L} |f(\l)|^p e^{-p\a \log^2|\l|} \frac{|\l|^{p/2 + 2}}{|z|^{-p/2 + 2}} \frac{d_{\log}(\l, \G)^p}{|\l - \g_z|^p} 
            \end{align*}
            Thus, for any $0 < p < \infty$, integrating over $\C_0$, we get
            \begin{align*}
                \| f \|^p_{\F^p_{\a}} & \lesssim K^p \left( \sum_{\l \in \L} |f(\l)|^p |\l|^2 e^{-p\a \log^2|\l|} \int_{\C_0} \frac{|\l|^{p/2}}{|z|^{-p/2 + 2}} \frac{d_{\log}(\l, \G)^p}{|\l - \g_z|^p} dA(z) \right) \\
                & \le K^p \|f|_\L\|^p_{p, \a} \sup_{\l \in \L} \int_{\C_0} \frac{|\l|^{p/2}}{|z|^{-p/2 + 2}} \frac{d_{\log}(\l, \G)^p}{|\l - \g_z|^p} dA(z).
            \end{align*}
            It follows that it is enough to show
            \begin{equation*}
                \int_{\C_0} \frac{|w|^{p/2}}{|z|^{-p/2 + 2}} \frac{d_{\log}(w, \G)^p}{|w - \g_z|^p} dA(z) \lesssim 1, \quad w \in \C_0.    
            \end{equation*}
            We estimate two parts separately:
            \begin{align*}
                \int_{|\g_z| \ge 2 |w|} \frac{|w|^{p/2}}{|z|^{-p/2 + 2}} \frac{d_{\log}(w, \G)^p}{|w - \g_z|^p} dA(z) & \lesssim \int_{|\g_z| \ge 2 |w|} |w|^{p/2} \frac{1}{|z|^{p/2 + 2}} dA(z) \\ 
                & \le \int_{|z| \gtrsim |w|} |w|^{p/2} \frac{1}{|z|^{p/2 + 2}} dA(z) \lesssim 1,
            \end{align*}
            \begin{align*}
                \int_{|\g_z| < 2 |w|} \frac{|w|^{p/2}}{|z|^{-p/2 + 2}} \frac{d_{\log}(w, \G)^p}{|w - \g_z|^p} dA(z) & \lesssim \int_{|\g_z| < 2 |w|} \frac{|w|^{-p/2}}{|z|^{-p/2 + 2}} \frac{\dist(w, \G)^p}{|w - \g_z|^p} dA(z) \\
                & \le \int_{|\g_z| < 2 |w|} \frac{|w|^{-p/2}}{|z|^{-p/2 + 2}} dA(z) \\
                & \le \int_{|z| \lesssim  |w|} \frac{|w|^{-p/2}}{|z|^{-p/2 + 2}} dA(z) \lesssim 1.
            \end{align*}
            This completes the proof.
        \end{proof}
        
\section{Proof of Theorem \ref{thrm: ShS p = 2}} \label{sect: Theorem 2}
    \subsection{Necessity} 
        Suppose that a separated $\L$ is ShS for $\F^p_\a$. We are going to show that it is ShS for $\F^{\infty}_\a$ and therefore, by Theorem \ref{thrm: ShS p = infty}, $D^-_{\log}(\L) > 2\a$.
        
        By Lemma \ref{lem: weak uniqueness} it is enough to show that any $\tilde \L \in W(\L)$ is a uniqueness set for $\F^{\infty}_\a$. Suppose it is not. Then there exists a non-zero $f \in \F^{\infty}_\a$ such that $f(\tilde \l) = 0, \ \tilde \l \in \tilde \L$. Take $\tilde \l_0, \ldots, \tilde \l_n \in \tilde \L$ with $n \ge 2/p$ and consider
        \begin{equation*}
            g(z) = \frac{f(z)}{(z - \tilde \l_0)\ldots(z - \tilde \l_n)}.
        \end{equation*}
        It belongs to $\F^p_{\a}$ and vanishes on $\tilde \L \setminus \{ \tilde \l_0, \ldots, \tilde \l_n \}$. But by Lemma \ref{lem: weak sampling p = 2} $\tilde \L$ is ShS for $\F^p_\a$, and therefore, by Lemma \ref{lem: removing point sampling}, $\tilde \L \setminus \{ \tilde \l_0, \ldots, \tilde \l_n \}$ is SS for $\F^p_\a$, a contradiction to the fact that non-zero $g \in \F^p_{\a}$ vanishes on $\tilde \L \setminus \{ \tilde \l_0, \ldots, \tilde \l_n \}$. 
    \subsection{Sufficiency} 
        Suppose $D^-_{\log}(\L) > 2\a$. Then, for sufficiently small $\e > 0$, we have $D^-_{\log}(\L) > 2(1 + \e)\a$. Hence, by Theorem \ref{thrm: ShS p = infty}, $\L$ is ShS for $\F^{\infty}_{(1 + \e)\a}$. Thus, by Lemma \ref{lem: alpha+eps, p = infty to alpha, p = 2}, $\L$ is ShS for $\F^p_\a$, completing the proof. \hfill \qedsymbol

    \subsection*{Acknowledgment} 
        The authors are grateful to Anton Baranov, Evgeny Abakumov, and Alexander Borichev for their valuable comments and suggestions.

\printbibliography

@article{HypCh,
  title={Irregular sampling for hyperbolic secant type functions},
  author={Baranov, Anton and Belov, Yurii},
  journal={Adv. Math.},
  volume={458},
  pages={109981},
  year={2024},
  publisher={Elsevier}
}

@article{RatFrames,
  title={Gabor frames for rational functions},
  author={Belov, Yurii and Kulikov, Aleksei and Lyubarskii, Yurii},
  journal={Invent. Math.},
  volume={231},
  number={2},
  pages={431--466},
  year={2023},
  publisher={Springer}
}

@article{SmallFock,
  title={Sampling, interpolation and Riesz bases in small Fock spaces},
  author={Baranov, Anton and Dumont, Andr{\'e} and Hartmann, Andreas and Kellay, Karim},
  journal={J. Math. Pures Appl.},
  volume={103},
  number={6},
  pages={1358--1389},
  year={2015},
  publisher={Elsevier}
}

@article{FockType,
  title={Riesz bases of reproducing kernels in Fock-type spaces},
  author={Borichev, Alexander and Lyubarskii, Yurii},
  journal={J. Inst. Math. Jussieu},
  volume={9},
  number={3},
  pages={449--461},
  year={2010},
  publisher={Cambridge University Press}
}

@book{IntervalFrames,
  title={The abc-problem for Gabor systems},
  author={Dai, Xin-Rong and Sun, Qiyu},
  volume={244},
  number={1152},
  year={2016},
  publisher={American Mathematical Society}
}

@article{HaarFrames,
  title={Frame set for Gabor systems with Haar window},
  author={Dai, Xin-Rong and Zhu, Meng},
  journal={Appl. Comput. Harmon. Anal.},
  volume={71},
  pages={101655},
  year={2024},
  publisher={Elsevier}
}

@article{DS52,
  title={A class of nonharmonic Fourier series},
  author={Duffin, Richard J and Schaeffer, Albert C},
  journal={Trans. Amer. Math. Soc.},
  volume={72},
  number={2},
  pages={341--366},
  year={1952},
  publisher={JSTOR}
}

@book{FoundTimeFreq,
  title={Foundations of time-frequency analysis},
  author={Gr{\"o}chenig, Karlheinz},
  year={2013},
  publisher={Springer Science \& Business Media}
}

@article{ComplPositive,
    author = {Karlheinz Gr{\"o}chenig and Joachim St{\"o}ckler},
    title = {{Gabor frames and totally positive functions}},
    volume = {162},
    journal = {Duke Math. J.},
    number = {6},
    publisher = {Duke University Press},
    pages = {1003 -- 1031},
    year = {2013},
}

@article{ShSGabor,
  title={Sampling theorems for shift-invariant spaces, Gabor frames, and totally positive functions},
  author={Gr{\"o}chenig, Karlheinz and Romero, Jos{\'e} Luis and St{\"o}ckler, Joachim},
  journal={Invent. Math.},
  volume={211},
  pages={1119--1148},
  year={2018},
  publisher={Springer}
}

@article{HypChReal,
  title={Sharp results on sampling with derivatives in shift-invariant spaces and multi-window Gabor frames},
  author={Gr{\"o}chenig, Karlheinz and Romero, Jos{\'e} Luis and St{\"o}ckler, Joachim},
  journal={Constr. Approx.},
  volume={51},
  pages={1--25},
  year={2020},
  publisher={Springer}
}

@article{ChFrames,
  title={Hyperbolic secants yield Gabor frames},
  author={Janssen, Augustus JEM and Strohmer, Thomas},
  journal={Appl. Comput. Harmon. Anal.},
  volume={12},
  number={2},
  pages={259--267},
  year={2002},
  publisher={Elsevier}
}

@article{OmariCISb,
  title={Riesz bases of reproducing kernels in small Fock spaces},
  author={Kellay, Karim and Omari, Youssef},
  journal={J. Fourier Anal. Appl.},
  volume={26},
  number={1},
  pages={17},
  year={2020},
  publisher={Springer}
}

@article{OmariCIS,
  title={Complete interpolating sequences for small Fock spaces},
  author={Omari, Youssef},
  journal={J. Funct. Anal.},
  volume={281},
  number={5},
  pages={109064},
  year={2021},
  publisher={Elsevier}
}

@article{FourierFrames,
  title={Fourier frames},
  author={Ortega-Cerd{\`a}, Joaquim and Seip, Kristian},
  journal={Ann. of Math.},
  pages={789--806},
  year={2002},
  publisher={JSTOR}
}

@article{SeipBargFock,
  title={Density theorems for sampling and interpolation in the Bargmann-Fock space I.},
  author={Seip, Kristian},
  journal={J. reine angew. Math},
  volume={429},
  number={1992},
  pages={91--106},
  year={1992},
  publisher={Walter de Gruyter, Berlin/New York Berlin, New York}
}

@article{SeipBargFock2,
  title={Density theorems for sampling and interpolation in the Bargmann-Fock space. II},
  author={Seip, Kristian and Wallst{\'e}n, Robert},
  journal={J. reine angew. Math},
  volume={429},
  number={1992},
  pages={107--114},
  year={1992}
}

@book{Beurling,
  title={The collected works of Arne Beurling: Harmonic analysis},
  author={Beurling, Arne},
  year={1989},
  volume={2},  
  publisher={Boston: Birkhäuser}
}

@misc{SmallFockCIS,
      title={A uniform approach to complete interpolating sequences for small Fock spaces with $p > 0$}, 
      author={Mikhail Mironov},
      year={2025},
      eprint={2511.18620},
      archivePrefix={arXiv}
}

@book{olevskii2016functions,
  title={Functions with disconnected spectrum},
  author={Olevskii, Alexander M and Ulanovskii, Alexander},
  volume={65},
  year={2016},
  publisher={American Mathematical Soc.}
}

@article{marco2003interpolating,
  title={Interpolating and sampling sequences for entire functions},
  author={Marco, Nicolas and Massaneda, Xavier and Ortega-Cerd{\`a}, Joaquim},
  journal={Geom. Funct. Anal.},
  volume={13},
  number={4},
  pages={862--914},
  year={2003},
  publisher={Springer}
}

@article{lyubarskii1994sampling,
  title={Sampling and interpolation of entire functions and exponential systems in convex domains},
  author={Lyubarskii, Yurii and Seip, Kristian},
  journal={Ark. Mat.},
  volume={32},
  number={1},
  pages={157--193},
  year={1994},
  publisher={Kluwer Academic Publishers Dordrecht}
}

@article{Ortega1995OnInterpolation,
  title = {On interpolation and sampling in Hilbert spaces of analytic functions.},
  author = {J. Ortega Cerdà and Bo Berndtsson},
  pages = {109--128},
  volume = {1995},
  number = {464},
  journal = {J. reine angew. Math},
  year = {1995},
}

@article{borichev2007sampling,
  title={Sampling and interpolation in large Bergman and Fock spaces},
  author={Borichev, Alexander and Dhuez, Remi and Kellay, Karim},
  journal={J. Funct. Anal.},
  volume={242},
  number={2},
  pages={563--606},
  year={2007},
  publisher={Elsevier}
}

\end{document}